\newcommand\C{\mathbb C}
\newcommand\R{\mathbb R}
\newcommand\Z{\mathbb Z}
\newcommand\EE{\mathbb E}
\newcommand\om{\omega}
\newcommand\al{\alpha}
\newcommand\be{\beta}
\newcommand\Ga{\Gamma}
\newcommand\de{\delta}
\newcommand\la{\lambda}
\newcommand\Om{\Omega}
\newcommand\si{\sigma}
\newcommand\Si{\Sigma}
\newcommand\epsi{\varepsilon}
\newcommand\PPsi{\overline{\Psi}}
\newcommand\Cliff{\operatorname{Cliff}}
\newcommand\const{\operatorname{const}}
\newcommand\sgn{\operatorname{sgn}}
\newcommand\Pf{\operatorname{Pf}}
\newcommand\Ker{\operatorname{Ker}}
\newcommand\CAR{\operatorname{CAR}}
\newcommand\GICAR{\operatorname{GICAR}}
\newcommand\tr{\operatorname{tr}}
\newcommand\Gr{\operatorname{Gr}}
\newcommand\dHermite{{\operatorname{dHermite}}}
\newcommand\dSine{{\operatorname{dSine}}}
\newcommand\detreg{\operatorname{det}_2}
\newcommand\J{P^{(a,a)}}
\renewcommand\SS{\mathcal S}
\renewcommand\AA{\mathcal A}
\newcommand\FF{\mathscr F}
\newcommand\PP{\mathcal P}
\newcommand\T{\mathcal T}
\newcommand\X{\mathfrak X}
\newcommand\A{\mathfrak A}
\newcommand\one{\mathbf1}
\newcommand\K{\mathbf{K}}
\newcommand\wt{\widetilde}
\newcommand\ccdot{\,\cdot\,}
\newtheorem{theorem}{Theorem}[section]
\newtheorem{proposition}[theorem] {Proposition}
\newtheorem{corollary}[theorem]{Corollary}
\newtheorem{lemma}[theorem]{Lemma}
\theoremstyle{definition}
\newtheorem{definition}[theorem]{Definition}
\newtheorem{remark}[theorem]{Remark}
\numberwithin{equation}{section}
\begin{document}

\title[]
{Determinantal point processes and \\
fermion quasifree states}

\author{Grigori Olshanski}

\address{$^1$ Institute for Information Transmission Problems, Moscow, Russia; 
\newline
\indent $^2$ Skolkovo Institute of Science and Technology, Moscow, Russia; 
\newline
\indent $^3$ National Research University Higher School of Economics, Moscow, Russia
\newline
\indent \emph{Email}: olsh2007@gmail.com}

\date{}

\begin{abstract}
Determinantal point processes are characterized by a special structural property of the correlation functions: they are given by minors of a correlation kernel. However, unlike the correlation functions themselves, this kernel is not defined intrinsically, and the same determinantal process can be generated by many different kernels. The non-uniqueness of a correlation kernel  causes difficulties in studying determinantal processes. 

We propose a formalism which allows to find a distinguished correlation kernel under certain additional assumptions. The idea is to exploit  a connection between determinantal processes and quasifree states on CAR, the algebra of canonical anticommutation relations.

We prove that the formalism applies to discrete $N$-point orthogonal polynomial ensembles and to some of their large-$N$ limits including the discrete sine process and  the determinantal processes with the discrete Hermite, Laguerre, and Jacobi kernels investigated by Borodin and the author in \cite{BO-2017}. As an application we resolve the equivalence/disjointness dichotomy for some of those processes. 
\end{abstract}

\maketitle

\tableofcontents

\section{Introduction}\label{sect1}

Let $\X$ be a countable set and $\Om$ denote the product space $\{0,1\}^\X$. To any collection  $\{p_x: x\in\X\}$ of real numbers, such that $0< p_x<1$, there corresponds a product measure on $\Om$, where $p_x$ defines the probability of $1$ at a given position $x\in\X$. The following claim is a particular case of classic Kakutani's theorem (1948) (see \cite[pp. 2018 and 2022]{Kakutani}).

\smallskip

\noindent\textbf{Theorem.} \emph{Two product measures on $\Om$, which correspond to two collections $\{p_x\}$ and $\{p'_x\}$ as above, are either equivalent or disjoint\footnote{Two measures are said to be \emph{disjoint} (or \emph{mutually singular} or else \emph{orthogonal}) if there exists a measurable set $A$ such that one measure is supported by $A$ while the other is supported by the complement of $A$. }, depending on whether the series
$$
\sum_{x\in\X}((\sqrt{p_x}-\sqrt{p'_x})^2+(\sqrt{1-p_x}-\sqrt{1-p'_x})^2) 
$$
is convergent or divergent. If both collections are separated from $0$ and $1$, then the above series may be replaced by a simpler one, $
\sum_{x\in\X}(p_x-p'_x)^2.
$
} 

\smallskip

One of the motivations of the present paper comes from the following open problem: find conditions of  equivalence and disjointness for a more general class of probability measures ---  the determinantal measures.

\subsection{Determinantal measures} 

Let $\X$ and $\Om$ be as above.  Elements $\om\in\Om$ are functions $\om(x)$ on $\X$ taking the binary values $0$ and $1$. Alternatively, we regard each $\om$ as a subset of $\X$, by identifying every subset with its indicator function. The space $\Om$ is compact in the product topology and is homeomorphic to the Cantor set.  

Let $\PP(\Om)$ denote the space of probability Borel measures on $\Om$. Any measure $M\in\PP(\Om)$ is uniquely determined by the infinite collection $\rho_1,\rho_2,\dots$ of \emph{correlation functions}. Here, for every $n=1,2,\dots$, $\rho_n$ is a symmetric function of $n$ arguments $x_1,\dots,x_n\in\X$, assumed to be pairwise distinct, and the value $\rho_n(x_1,\dots,x_n)$ equals the mass assigned by $M$ to the cylinder set 
\begin{equation}\label{eq1.A1}
\{\om\in\Om:\; \om(x_1)=\dots=\om(x_n)=1\}.
\end{equation}

\begin{definition}\label{def1.A}
We say that $M\in\PP(\Om)$ is a \emph{determinantal measure} if there exists a complex valued function $K(x,y)$ on $\X\times\X$ such that for every $n$ and any $n$-tuple $(x_1,\dots,x_n)$ as above, 
\begin{equation}\label{eq1.A2}
\rho_n(x_1,\dots,x_n)=\det[K(x_i,x_j)]_{i,j=1}^n.
\end{equation}
Then $K(x,y)$ is called a \emph{correlation kernel} of $M$.
\end{definition}

The definition of determinantal measures can be extended to more general, not necessarily discrete, spaces $\X$, but in the present paper we focus on the discrete case, which is already rich enough. For more details about determinantal measures see Ben Hough, Krishnapur, Peres, and Virag \cite{BKPV}, Borodin \cite{Bor-2011}, Lyons \cite{Lyons}, Shirai and Takahashi \cite{ST-1}, \cite{ST-2}, Soshnikov \cite{Soshnikov}.\footnote{A note about terminology: for the purposes of the present paper one may not distinguish a random process from its distribution; for this reason  we abandon the conventional term ``determinantal point processes'' and speak about ``determinantal measures''.}

Let $\ell^2(\X)$ be the complex Hilbert space with a distinguished orthonormal basis $\{e_x\}$ indexed by the set $\X$.\footnote{We keep to the convention that the inner product is linear in the first variable.}  An operator  $K$ on $\ell^2(\X)$ is called a \emph{positive contraction} if $K=K^*$ and $0\le K\le1$.  It is known (see the papers cited above) that for any positive contraction $K$, its matrix $K(x,y):=(K e_y,e_x)$ serves as a correlation kernel for a (necessarily unique) determinantal measure. That measure will be denoted by $M^K$. 

In particular, the measure $M^K$ is defined for any \emph{projection operator} $K$, that is, the operator of orthogonal projection onto a subspace in $\ell^2(\X)$. The class of determinantal measures with projection correlation kernels is important because it embraces a lot of concrete examples and because the projection property is substantially used in a number of results, see Bufetov \cite{Buf-2018}, Lyons \cite{Lyons}. 
  
\subsection{Quasifree states}\label{sect1.2}
Let $\A=\A(\X)$ denote the unital $C^*$-algebra generated by the elements $a^+_x, a^-_x$ indexed by the points $x\in\X$, with the defining relations
$$
a^+_xa^+_y+a^+_ya^+_x=0, \quad a^-_xa^-_y+a^-_ya^-_x=0, \quad a^+_xa^-_y+a^-_ya^+_x=\de_{xy},  \quad (a^+_x)^*=a^-_x, \qquad x,y\in\X,
$$
and let $\A^0=\A^0(\X)$ be its $C^*$-subalgebra generated by the elements of the form $a^+_xa^-_y$.  The algebra $\A$ is the \emph{algebra of canonical anticommutation relations} (CAR, for short), associated with the complex Hilbert space $\ell^2(\X)$, and $\A^0\subset\A$ is called the \emph{gauge invariant subalgebra} (some authors use for it the abbreviation GICAR). We use a bit nonstandard notation ($a^+_x, a^-_x$ instead of $a^*_x, a_x$), as in Meyer \cite[ch. II, \S5]{Meyer}. 

\begin{definition}\label{def1.B}
Let $K$ be a positive contraction on $\ell^2(\X)$ and $K(x,y)=(K e_y,e_x)$ be its matrix, as before. The \emph{quasifree state} on $\A^0$ corresponding to $K$ is the linear functional $\varphi[K]: \A^0\to\C$, uniquely defined by the following conditions. First, $\varphi[K](1)=1$. Second, for any $n=1,2,\dots$ and any two $n$-tuples $x_1,\dots,x_n$ and $y_1,\dots,y_n$,
\begin{equation}\label{eq1.A3}
\varphi[K](a^+_{x_n}\dots a^+_{x_1}a^-_{y_1}\dots a^-_{y_n})=\det[(K(y_i,x_j)]_{i,j=1}^n.
\end{equation}
\end{definition}

It is known that $\varphi[K]$ is indeed a state, that is, $\varphi[K](aa^*)\ge0$ for any $a\in\A^0$. A similar definition holds for the $\CAR$ algebra $\A\supset \A^0$: then one adds the condition
$$
\varphi[K](a^+_{x_m}\dots a^+_{x_1}a^-_{y_1}\dots a^-_{y_n})=0, \quad m\ne n. 
$$

The quasifree states on $\CAR$ were intensively studied in the sixties, see Powers--St{\o}rmer \cite{PS-1970}, Araki \cite{Araki},  and references therein (and also the book Bratteli--Robinson \cite{BR}). The more delicate case of $\GICAR$ was investigated later, see Baker \cite{Baker}, Str\u{a}til\u{a}--Voiculescu \cite{SV-1978}.  

\subsection{The interplay between determinantal measures and quasifree states} 
Comparing the above two definitions, we immediately see that they are very similar. There is in fact not only  a formal similarity, but a direct connection, which we describe now. The key observation is that the algebra $C(\Om)$ of continuous functions on $\Om$ with pointwise operations can be identified, in a natural way, with the maximal commutative subalgebra of $\A^0$ generated by the quadratic elements of the form $a^+_xa^-_x$, $x\in\X$. Under this identification, the indicator of the cylinder set \eqref{eq1.A1} is identified with the element
\begin{equation}\label{eq1.A4}
a^+_{x_n}\dots a^+_{x_1}a^-_{x_1}\dots a^-_{x_n}=(a^+_{x_1}a^-_{x_1})\dots(a^+_{x_n}a^-_{x_n}).
\end{equation}

Let, as above, $K$ be a positive contraction and $\varphi[K]\!\downarrow\!C(\Om)$ denote the restriction of the quasifree state $\varphi[K]$ to $C(\Om)$. Recall a well-known general fact: there is a natural one-to-one correspondence between $\PP(\Om)$ and states on $C(\Om)$: namely, the state corresponding to a measure $M\in\PP(\Om)$ is the expectation $\EE_M$ defined by
\begin{equation}\label{eq1.B}
\EE_M(f):=\int_\Om f(\om) M(d\om), \qquad f\in C(\Om).
\end{equation}
Then we see from \eqref{eq1.A2}, \eqref{eq1.A3}, and \eqref{eq1.A4} that the measure corresponding to  the state $\varphi[K]\!\downarrow\!C(\Om)$ is nothing else than the determinantal measure $M^K$.  
In this way we obtain a natural correspondence 
\begin{equation}\label{eq1.A5}
\text{quasifree states} \longrightarrow \text{determinantal measures}, \qquad \varphi[K]\mapsto M^K.
\end{equation}

The correspondence \eqref{eq1.A5} is an example of a link between noncommutative probability (states on a noncommutative algebra) and classical probability (probability measures, or states on a commutative algebra).\footnote{Note that, in the literature, there are other examples of such a kind, but of a different nature, see e.g. Biane \cite{Biane}.} The goal of the present paper is to better understand the interplay between determinantal measures and quasifree states, and to apply it to the study of determinantal measures. 

Note a peculiarity of the definition of determinantal measures via formula \eqref{eq1.A2}: the correlation functions $\rho_n$ on the left-hand side are invariants of $M$, but the kernel $K(x,y)$ on the right-hand side is not. Indeed, the kernel is represented in \eqref{eq1.A2} by its diagonal minors only, which do not suffice for its reconstruction. For instance, any ``gauge transformation''  of the form
\begin{equation}\label{eq1.C}
K(x,y) \leadsto f(x)K(x,y)f(y)^{-1}
\end{equation}
does not affect the correlation functions. Even in the class of real-valued projection kernels there is the freedom to take as $f$  an arbitrary function with values $\pm1$.\footnote{In the class of symmetric kernels $K(x,y)$, the gauge transformations with $f(x)=\pm1$ are the only transformations that preserve the diagonal minors, see Stevens \cite{Stevens}. } As a consequence, it turns out that, typically, there are many different kernels defining one and the same measure.  It also follows that the correspondence $\varphi\mapsto M$ in \eqref{eq1.A5} is typically many-to-one. 

The idea of the present paper is to try to invert the correspondence $\varphi\mapsto M$, that is, to find a way to single out  a ``canonical'' correlation kernel for $M$. We show that this is possible, at least in some cases of interest.  The key condition that we need is that $M$ has to be \emph{quasi-invariant} with respect  to the action of a natural countable group of transformations of $\Om$. In \cite{Ols-2011}, the desired quasi-invariance property was established for a particular family of determinantal measures originated from asymptotic representation theory. Then Bufetov \cite[Theorem 1.6]{Buf-2018} showed, by a different method, that this property is not an exceptional phenomenon --- it holds for a broad class of measures with projection kernels. Thus, our key condition seems to be reasonable and not too restrictive. 

Lytvynov \cite{Lyt} and Lytvynov-Mei \cite{LytM} earlier showed that a version of the correspondence $\varphi\mapsto M$ also holds in the continuous case, when $\X$ is a locally compact topological space and as $\Om$ one takes the space of locally finite point configurations in $\X$. However, for continuous spaces, the definition of the correspondence $\varphi\mapsto M$ is not so obvious as  for discrete ones. The reason is that the creation/annihilation operators $a^\pm_x$ parameterized by points of $\X$ cannot be defined as elements of the algebra $\CAR$ --- these ``generalized elements'' need be smoothed; but even then there is a problem with giving meaning to the products $a^+_xa^-_x$. As a result, there is no direct analogue of the embedding $C(\Om)\hookrightarrow\A^0$. In the papers  \cite{Lyt}, \cite{LytM}, this difficulty was successfully overcome, but this required considerable work. Whether it is possible to extend our formalism to the continuous case is not yet clear. 

\subsection{The equivalence/disjointness problem}

As shown in  Str\u{a}til\u{a}--Voiculescu \cite[\S3]{SV-1978} (see also Baker \cite{Baker}), the following results hold for  quasifree states  $\varphi[K]$ of $\GICAR$:

\smallskip

$\bullet$ $\varphi[K]$ is a pure state (that is, the corresponding cyclic representation of $\GICAR$ is irreducible) if and only if $K$ is a projection operator;

\smallskip

$\bullet$  two pure quasifree states, $\varphi[K_1]$ and $\varphi[K_2]$ are equivalent if and only if the difference  $K_1-K_2$ is of Hilbert--Schmidt class and some Fredholm operator built from $K_1$ and $K_2$ has index $0$.

\smallskip   

Note that the Hilbert--Schmidt condition also plays a key role in some other equivalence criteria: for boson quasifree states and for Gaussian measures.   

As was pointed out in author's paper \cite{Ols-2011}, no similar general criterion for determinantal measures is known.
In that paper, the following problem was posed:

\smallskip

\emph{Problem.} Assume we are given two determinantal measures on a common space. How to test their equivalence (or, on the contrary, disjointness)? Is it possible to decide this by inspection of their correlation kernels?

\smallskip

One could imagine that equivalence of determinantal measures is somehow related to closeness of their kernels in an appropriate sense. However, the non-uniqueness of correlation kernels is a source of difficulty here. For instance, the difference between a kernel and its modification by a ``gauge transformation'' \eqref{eq1.C} can be large enough (in particular, not in the  Hilbert--Schmidt class), while any such modification does not affect the measure at all.

We show that the construction of ``canonical'' correlation kernels, when applicable, removes this difficulty and allows to resolve the problem of equivalence/disjointness for certain concrete determinantal measures.

\subsection{Organization of the paper and summary of results}

\subsubsection{From quasi-invariant measures to states on $\A^0$}

Let, as above, $\X$ be a countable set, $\Om=\{0,1\}^\X$, and $\PP(\Om)$ be the set of probability Borel measures on $\Om$. Next, let $\SS$ denote the group of \emph{finitary permutations} of $\X$ (a permutation $g:\X\to\X$ is said to be finitary if $g(x)\ne x$ for finitely many $x\in\X$). The group $\SS$ is countable; it  acts, in a natural way, on the space $\Om$ by homeomorphisms. This makes it possible to define the cross product $C^*$-algebra $C(\Om)\rtimes\SS$. 

First, in Section \ref{sect2}, we recall the well known procedure which assigns to an arbitrary $\SS$-quasi-invariant measure $M\in\PP(\Om)$ a representation $\T[M]$ of the algebra $C(\Om)\rtimes\SS$ acting on the Hilbert space $L^2(\Om,M)$. The representation $\T[M]$ is irreducible if and only if $M$ is ergodic. 

Next, in Sections \ref{sect3} - \ref{sect4}, we suppose that $\X$ is endowed with a linear order with finite intervals. Under this assumption we construct a surjective homomorphism $C(\Om)\rtimes\SS\to\A^0$ and show that $\T[M]$ is factored through it, so that $\T[M]$ can be treated as a representation of the algebra $\A^0$; that representation will be denoted by $T[M]$. 

Let us denote by $\tau[M]$ the state on $\A^0$ defined by
$$
\tau[M](a):=(T[M](a)\one,\one)_{L^2(\Om,M)}, \qquad a\in\A^0,
$$
where $\one$ stands for the function on $\Om$ identically equal to $1$. By the very definition, $\tau[M]\downarrow C(\Om)$ is the state $\EE_M$ defined by \eqref{eq1.B}. Thus,  $\tau[M]$ keeps the whole information about the initial measure $M$. Because the noncommutative algebra $\A^0$ possesses  a richer structure than the commutative algebra $C(\Om)$, it is tempting to apply the correspondence $M\mapsto \tau[M]$ to the study of determinantal measures. But for this we need to understand the nature of the states $\tau[M]$. This leads us to the question: let $M$ be an $\SS$-quasi-invariant measure; when is $\tau[M]$ a quasifree state? In other words, when $\tau[M]=\varphi[K]$ for some $K$? 

It is convenient to give a name to measures $M$ with this property. We will say that $M$ is \emph{perfect} if $\tau[M]$ is a quasifree state $\varphi[K]$. Then $M$ is automatically a determinantal measure and we will call $K(x,y)$ (the matrix of $K$) the \emph{canonical} correlation kernel of $M$. 

Note that product measures are not perfect, see Remark \ref{rem4.D}. In the second part of the paper we provide examples of perfect measures. 

\subsubsection{Discrete orthogonal polynomials ensembles}
In Section \ref{sect5} we assume that $(\X,<)$ is a countable or finite\footnote{All the results stated above for countable sets hold for finite sets as well.} subset of $\R$ with the order induced from $\R$, $N$ is a fixed positive integer, and $W(x)$ is a strictly positive function on $\X$ such that 
$$
\sum_{x\in\X}x^{2N} W(x)<\infty.
$$
Denote by $\Om_N\subset\Om$ the set of $N$-point subsets of $\X$.  The \emph{$N$-point orthogonal polynomial ensemble with weight function $W$} is determined by the probability measure $M_{N,W}\in\PP(\Om)$ supported by $\Om_N$ and given by 
$$
M_{N,W}(\om)=\frac1{\mathcal Z_N}\cdot\prod_{i=1}^N W(x_i)\cdot \prod_{1\le i<j\le N} (x_i-x_j)^2, \qquad \om=\{x_1,\dots,x_N\}\in\Om_N,
$$
where $\mathcal Z_N$ is the normalizing constant. 

Let $\K_{N,W}$ be the operator of orthogonal projection onto the $N$-dimensional subspace in $\ell^2(\X)$ spanned by the functions $x^n W^{1/2}(x)$, $n=0,1,\dots,N-1$, and let $K_{N,W}(x,y)=(\K_{N,W}e_y,e_x)$ be the corresponding kernel on $\X\times\X$. It is well known (K\"onig  \cite{Konig}) that $M_{N,W}$ is a determinantal measure admitting $K_{N,W}(x,y)$ as a correlation kernel.  

Because the set $\Om_N$ forms an $\SS$-orbit and the measure $M_{N,W}$ charges every $\om\in\Om_N$, the measure is $\SS$-quasi-invariant. Therefore, the results of Section \ref{sect4} are applicable and the state $\tau[M_{N,W}]$ on $\A^0$ is well defined. 

Theorem \ref{thm5.A} states that $\tau[M_{N,W}]=\varphi[\K_{N,W}]$. Thus, the measure $M_{N,W}$ is a perfect measure in the sense of Definition \ref{def4.A}, and its canonical correlation kernel is $K_{N,W}(x,y)$.

Many concrete examples of discrete orthogonal polynomials satisfying our assumptions are provided by the Askey scheme and its $q$-analogue (Koekoek--Lesky--Swarttouw \cite{KLS-2010}). 

In Theorem \ref{thm5.A}, it is not too surprising that the canonical kernel turns out to be the most natural one, but the very fact that $M_{N,W}$ is perfect is not evident. Our proof relies on an elegant formula for the average of products of characteristic polynomials contained in Proposition 4.1 of the paper \cite{StrFyo} by Strahov and Fyodorov. 

\subsubsection{Limits of discrete orthogonal polynomial ensembles}

In the theory of determinantal point processes, orthogonal polynomial ensembles (discrete and continuous) play a fundamental role, because lots of less elementary determinantal processes are obtained from them via various large-$N$ limit transitions. Some structural properties persist in limit transitions: for instance, the so-called integrable form of various limit kernels comes from the Christoffel--Darboux identity for orthogonal polynomials. 

In connection with this, a natural question arises: does the ``perfectness'' property survive in large-$N$ limit transitions?

In Theorem \ref{thm8.A} it is shown that under some technical conditions the answer is positive. This result is deduced  from Theorem \ref{thm5.A} after a preparation occupying Sections \ref{sect6} and \ref{sect7}. The material of these two auxiliary sections is based on some results extracted from Lyons \cite{Lyons} and Bufetov \cite{Buf-2018}.  

From Theorem \ref{thm8.A} we obtain concrete examples of perfect measures with canonical projection correlation kernels corresponding to linear subspaces of infinite dimension and codimension, see Theorem \ref{thm8.B} and Theorem \ref{thm8.C}.  These kernels are the \emph{discrete Hermite, Laguerre, and Jacobi kernels} on $\Z_{\ge0}\times\Z_{\ge0}$ introduced in \cite{BO-2017} and the \emph{discrete sine kernel} on $\Z\times\Z$ defined by
$$
K^\dSine_\phi(x,y)=\begin{cases} \dfrac{\sin( \phi\,(x-y))}{\pi(x-y)},  & x\ne y,\\
\phi/\pi,  & x=y \end{cases}
$$
(here $\phi\in(0,\pi)$ is the parameter). The discrete sine kernel first emerged in the study of the asymptotics of the Plancherel measures on partitions \cite{BOO}. Like the famous sine kernel on $\R\times\R$, the discrete sine kernel possesses a universality property \cite{BKMM-2007}.

\subsubsection{Applications}

The results described above, combined with the criterion of equivalence/disjointness of quasifree states (we recall it in Section \ref{sect9}), can be applied to the problem of equivalence/dis\-joint\-ness for determinantal measures. In Section \ref{sect10} we give two concrete examples. 

In Theorem \ref{thm10.A}, we consider the one-parameter family of discrete Hermite kernels and show that the corresponding determinantal measures are pairwise disjoint. 

In Theorem \ref{thm10.B} we deal with a one-parameter family of discrete Jacobi kernels and show that the corresponding determinantal measures are pairwise equivalent.

\section{Quasi-invariant measures and crossed products}\label{sect2}

In this section we describe a simple general construction that assigns to a discrete group $G$ and a $G$-quasi-invariant measure $M$ a representation $\T[M]$ of a crossed product $C^*$-algebra. We show how properties of measures are related to properties of the associated representations. 

\subsection{A dichotomy for ergodic measures}

In this subsection we fix a Borel space $(\Om,\Si)$, that is, $\Om$ is a set and $\Si$ is a $\si$-algebra of subsets of $\Om$. All measures are assumed to be defined on $\Si$, positive, and $\si$-finite. 
Given a measure $M$, we say that a set $A\in\Si$ is \emph{$M$-null} if $M(A)=0$; in this case the complement $\Om\setminus A$ is said to be \emph{$M$-conull}. 

Next, let $G$ be a countable (or finite) group of automorphisms of $(\Om,\Si)$.  It acts, in a natural way, on the set of measures. Namely, the transformation of a measure $M$ by an element $g\in G$ is the measure ${}^g\! M$ defined by
$$
{}^g\!M(A)=M(g^{-1}(A)), \qquad A\in\Si.
$$

A measure $M$ is said to be \emph{$G$-quasi-invariant} if, for any $g\in G$ the measures $M$ and ${}^g\!M$ are equivalent. This holds if and only if the collection of  $M$-null subsets is stable under the transformations from $G$. 

Recall that a $G$-quasi-invariant measure $M$ is said to be \emph{ergodic} if any $G$-invariant  set $A\in\Si$ is either $M$-null or $M$-conull.   

Because $G$ is at most countable, in the above condition one can require equally well that $A$ be $G$-invariant \emph{mod $0$}, meaning that, for any $g\in G$,  the symmetric difference $A\triangle g(A)$ is a $M$-null set. 

Two measures $M_1$ and $M_2$ are said to be \emph{disjoint} (or \emph{mutually singular}) if there exist two nonintersecting subsets $A_1, A_2\in\Si$ such that $A_1$ is $M_1$-conull and $A_2$ is $M_2$-conull. 

\begin{proposition}\label{prop2.A}
Let, as above, $(\Om,\Si)$ be a Borel space and $G$ be a countable or finite group of its automorphisms. 
For any two nonzero $G$-quasi-invariant ergodic measures  $M_1$ and $M_2$, the following dichotomy holds: $M_1$ and $M_2$ are either equivalent or disjoint.  
\end{proposition}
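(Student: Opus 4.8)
The plan is to reduce the statement to the uniqueness of the Lebesgue decomposition together with the ergodicity hypothesis. First I would introduce the auxiliary measure $M:=M_1+M_2$, which is again $\si$-finite and $G$-quasi-invariant and which dominates both $M_1$ and $M_2$. By the Radon--Nikodym theorem there are densities $f_i:=dM_i/dM$ with $0\le f_i\le 1$ and $f_1+f_2=1$ $M$-almost everywhere. These densities partition $\Om$ (modulo an $M$-null set) into three measurable pieces: $E_1:=\{f_2=0\}$, $E_2:=\{f_1=0\}$, and the ``common'' region $E_{12}:=\{f_1>0,\,f_2>0\}$. By construction $M_1(E_2)=0$ and $M_2(E_1)=0$, while on $E_{12}$ a subset is $M_1$-null iff it is $M$-null iff it is $M_2$-null; thus the restrictions of $M_1$ and $M_2$ to $E_{12}$ are equivalent. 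The whole point is to show that this trichotomy is compatible with the $G$-action, after which ergodicity collapses it into the desired dichotomy.

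The main step, and the one I expect to be the crux, is to prove that each of $E_1,E_2,E_{12}$ is $G$-invariant modulo $0$ with respect to $M$. I would argue this for $E_2$ (the case of $E_1$ being symmetric, and that of $E_{12}$ following by complementation). The key observation is that $E_2=\{f_1=0\}$ is, modulo $M$-null sets, the \emph{maximal} $M_1$-null set: indeed, if $A$ is $M_1$-null then $\int_A f_1\,dM=0$ forces $f_1=0$ $M$-a.e.\ on $A$, so $A\subseteq E_2$ mod $0$. Now fix $g\in G$. Because $M_1$ is $G$-quasi-invariant and ${}^{g^{-1}}\!M_1\sim M_1$, the set $g(E_2)$ satisfies $M_1(g(E_2))={}^{g^{-1}}\!M_1(E_2)=0$, so $g(E_2)$ is $M_1$-null and hence $g(E_2)\subseteq E_2$ mod $0$ by maximality. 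Applying the same inclusion to $g^{-1}$ and using that $g$ preserves $M$-null sets (quasi-invariance of $M$) yields that $E_2\triangle g(E_2)$ is $M$-null, i.e.\ the claimed invariance.

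Once the $G$-invariance mod $0$ of $E_{12}$ is established, it holds with respect to both $M_1$ and $M_2$, since every $M$-null set is null for each $M_i$; hence ergodicity of each $M_i$ applies to $E_{12}$. This leaves a short case analysis. Ergodicity of $M_1$ gives either $M_1(E_{12})=0$, whence $M_1$ lives on $E_1$ (recall $M_1(E_2)=0$), an $M_2$-null set, so that $M_1\perp M_2$ at once; or $M_1(E_1)=0$, whence $M_1$ is concentrated on $E_{12}$. The situation for $M_2$ is symmetric. If either measure lands on its pure region we are in the disjoint alternative; the only remaining possibility is that both $M_1$ and $M_2$ are concentrated on $E_{12}$, where they are mutually equivalent, giving the equivalent alternative. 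This exhausts all cases and proves the dichotomy.
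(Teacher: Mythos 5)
Your proof is correct, but it follows a genuinely different route from the paper's. The paper argues by contraposition in three lines: assuming $M_1\not\sim M_2$, it picks a set $A$ with $M_1(A)=0<M_2(A)$, passes to the saturation $B:=\bigcup_{g\in G}g(A)$, which is \emph{exactly} $G$-invariant, still $M_1$-null (quasi-invariance of $M_1$ plus countability of $G$), and of positive $M_2$-measure, hence $M_2$-conull by ergodicity of $M_2$ alone --- disjointness follows at once. You instead build the mutual Lebesgue decomposition: with $M:=M_1+M_2$ and densities $f_i=dM_i/dM$ you split $\Om$ into the two pure regions and the common region $E_{12}$, prove these are $G$-invariant mod $0$ via the maximality of $\{f_i=0\}$ among $M_i$-null sets, and then let ergodicity of \emph{both} measures collapse the trichotomy. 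Your key invariance step is sound (the identity $M_1(g(E_2))={}^{g^{-1}}\!M_1(E_2)$ and the maximality argument are exactly right), and the $\si$-finiteness needed for Radon--Nikodym is available since $M_1+M_2$ is again $\si$-finite. What the two approaches buy: the paper's saturation trick is shorter, avoids Radon--Nikodym entirely, and needs ergodicity of only one of the two measures in any given instance; yours is more structural --- it exhibits explicitly where each measure lives and shows that ergodicity forces the common part of the two measures to be either null or conull, which is a sharper picture of \emph{why} the dichotomy holds.
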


\begin{proof}
Suppose that $M_1$ and $M_2$ are not equivalent. This means that there exists a subset $A\in\Si$, which is a null set for one measure but not for the other. Let, for definiteness, $M_1(A)=0$ and $M_2(A)>0$. Then the same holds with $A$ replaced by $B:=\bigcup_{g\in G}g(A)$. Since $B$ is $G$-invariant, it is conull with respect to $M_2$. Therefore, $M_2$ is supported by $B$ while $M_1$ is supported by $\Om\setminus B$, so that $M_1$ and $M_2$ are disjoint. 
\end{proof}

This simple reasoning is taken from Yamasaki \cite[p. 147, Remark 3]{Y}. (After a modification it works for not necessarily countable group actions as well, see \cite[p. 144, Theorem 6.1]{Y}.)

\subsection{Representations associated with quasi-invariant measures}\label{sec2.2}
Let us recall the crossed product construction for $C^*$-algebras; for more details, see Brown--Ozawa \cite{BrownOzawa}, Williams \cite{Williams}. 

\begin{definition}\label{def2.A}
Let $\AA$ be a separable unital $C^*$ algebra and $G$ be a finite or countable group of its automorphisms. 

(i) A \emph{covariant representation\/} of $(\AA, G)$ is a
pair $(\T_1,\T_2)$, where $\T_1$ is a representation of $\AA$ and
$\T_2$ is a unitary representation of $G$ on a common separable Hilbert space, such
that
$$
\T_2(g)\T_1(f)\T_2(g^{-1})=\T_1({}^g\!f), \qquad f\in\mathcal A, \quad
g \in G,
$$
where ${}^g\!f$ denotes the result of application of $g$ to $f$. 

(ii) There exist a (unital and separable) $C^*$-algebra $\mathcal B$ equipped with a morphism $\AA\to\mathcal B$ and a morphism of $G$ into the unitary group of $\mathcal B$
such that:

1) these two morphisms are consistent (in a natural sense) with the action of
$G$ on $\AA$;

2) the images of $\AA$ and $G$ generate $\mathcal B$;

3) any covariant representation $(\T_1,\T_2)$ of $(\AA,G)$ factors
through a representation of $\mathcal B$.

Moreover, such an algebra $\mathcal B$ is unique, up to equivalence. It is
called the (full) {\it crossed product\/} of $\AA$ and $G$ and denoted by
$\AA\rtimes G$.

\end{definition}

Now we continue the discussion started in the previous subsection but restrict the class of triples $(\Om,\Si,G)$ under consideration. Namely, we assume that $\Om$ is a compact topological space, metrizable and separable; $\Si$ is the $\si$-algebra of Borel sets; $G$ is a finite or countable group of homeomorphisms of $\Om$. 

Let $C(\Om)$ be the space of continuous complex-valued functions on $\Om$. It is a commutative unital $C^*$-algebra, and the group $G$ acts on it by automorphisms.  This makes it possible to form their crossed product $C(\Om)\rtimes G$. 

Let $M$ be a nonzero $G$-quasi-invariant measure on $\Om$. We are going to assign to it a representation of the algebra $C(\Om)\rtimes G$ acting on the Hilbert space $L^2(\Om,M)$. According to Definition \ref{def2.A}, it suffices to specify a covariant representation $(\T_1,\T_2)$, and this is done in the most natural and simple way.  

Namely, elements $f\in C(\Om)$ act as operators of
multiplication,
$$
\T_1(f)h=fh, \qquad h\in L^2(\Om,M),
$$
and the unitary representation $\T_2$ of the group $G$ is given by the  well-known \emph{Koopman-type construction}, see Mackey \cite[pp. 26, 36]{Mackey}:
\begin{equation}\label{eq2.B}
(\T_2(g)h)(\om)=h(g^{-1}(\om))\phi^{1/2}(\om,g), \qquad h\in L^2(\Om,M), \quad  \om\in\Om, \quad
g\in G,
\end{equation}
where $\phi(\om,g)$ is the $1$-cocycle coming from the Radon--Nikod\'ym derivative:
\begin{equation}\label{eq2.A}
\phi(\om,g):=\frac{{}^g\!M}{M}(\om), \qquad \om\in\Om, \quad
g\in G.
\end{equation}

It is readily checked that  $(\T_1,\T_2)$ is a covariant representation. Therefore, it gives
rise to a representation of $C(\Om)\rtimes G$; let us denote it by $\T[M]$. 

\begin{proposition}\label{prop2.B}
Let, as above, $\Om$ be a compact separable metrizable space, $G$ be a finite or countable group of its homeomorphisms, $M$ be a nonzero $G$-quasi-invariant measure, and $\T[M]$ be the associated representation of the crossed product $C(\Om)\rtimes G$. 

{\rm(i)} The equivalence class of the representation $\T[M]$ depends only on the equivalence class of the measure $M$. Conversely, the equivalence class of $M$ can be recovered from the equivalence class of $\T[M]$.

{\rm(ii)} $M$ is ergodic if and only if $\T[M]$ is irreducible. 
\end{proposition}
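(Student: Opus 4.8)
The goal is to prove Proposition~\ref{prop2.B}, which has two parts. The plan is to exploit the standard dictionary between measures, representations of the crossed product, and the GNS/cyclic-vector formalism. For part~(ii), the ergodicity$\leftrightarrow$irreducibility equivalence, I would argue by characterizing the commutant of $\T[M](C(\Om)\rtimes G)$. The key observation is that the commutant of the multiplication algebra $\T_1(C(\Om))$ inside $B(L^2(\Om,M))$ is the algebra $L^\infty(\Om,M)$ of multiplication by essentially bounded functions (here I use that $(\Om,M)$ is a standard measure space, which is guaranteed by our assumption that $\Om$ is compact, metrizable, and separable). An operator in the full commutant of $\T[M]$ must therefore be multiplication by some $\psi\in L^\infty(\Om,M)$, and the additional requirement of commuting with every $\T_2(g)$ forces, via the explicit cocycle formula \eqref{eq2.B}, that $\psi\circ g^{-1}=\psi$ almost everywhere for all $g\in G$, i.e.\ $\psi$ is $G$-invariant mod~$0$. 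Ergodicity says exactly that such a $\psi$ must be constant mod~$0$, which means the commutant is trivial, i.e.\ the representation is irreducible; conversely, if $M$ is not ergodic, a nonconstant $G$-invariant set yields a nontrivial projection in the commutant.

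For part~(i), one direction is easy: if $M$ and $M'$ are equivalent measures, then multiplication by the square root of the Radon--Nikod\'ym derivative $(dM'/dM)^{1/2}$ gives a unitary $L^2(\Om,M)\to L^2(\Om,M')$ that visibly intertwines both $\T_1$ and $\T_2$ (the cocycle transforms correctly under the change of measure), so the representations are unitarily equivalent. The harder converse --- recovering the equivalence class of $M$ from that of $\T[M]$ --- is where I would spend the most care. My approach is to recover $M$ up to equivalence from the representation as follows. The representation contains a distinguished cyclic vector, namely the constant function $\one\in L^2(\Om,M)$, and the positive linear functional $f\mapsto(\T_1(f)\one,\one)=\int_\Om f\,dM$ on $C(\Om)$ recovers $M$ exactly by the Riesz representation theorem. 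The difficulty is that an abstract unitary equivalence of representations need not send $\one$ to $\one$, so the state is not literally an invariant of the equivalence class. The remedy is to characterize $M$ up to equivalence purely spectrally: the equivalence class of $M$ is encoded by the quasi-equivalence class of the representation $\T_1$ of the commutative algebra $C(\Om)$, which amounts to the list of which Borel subsets carry nonzero spectral mass; two measures give quasi-equivalent multiplication representations precisely when they have the same null sets, i.e.\ are equivalent.

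The step I expect to be the main obstacle is making this converse rigorous, because it requires isolating exactly what datum of $\T[M]$ determines $M$ as a \emph{measure class} rather than as a measure. I would structure it by first restricting $\T[M]$ to the commutative subalgebra $C(\Om)$ and invoking the spectral-multiplicity theory of representations of abelian $C^*$-algebras (equivalently, of $L^\infty$): a representation of $C(\Om)$ is determined up to unitary equivalence by a measure class together with a multiplicity function. Here the multiplicity is uniformly one because $\one$ is cyclic for $\T_1$ alone on $L^2(\Om,M)$ (continuous functions are dense in $L^2$), so the measure class is a complete invariant. It then remains only to check that passing from $\T[M]$ to its restriction $\T_1$ does not lose the needed information and that unitarily equivalent $\T[M]$'s restrict to unitarily equivalent, hence same-measure-class, representations of $C(\Om)$ --- a routine but necessary verification. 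Throughout I would lean on the standing assumption that $\Om$ is compact, metrizable, and separable, which guarantees we are in the standard Borel setting where these spectral-multiplicity statements are available in clean form.
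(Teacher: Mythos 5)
Your proposal is correct and follows essentially the same route as the paper: for part (i), the forward direction via multiplication by the square root of the Radon--Nikod\'ym derivative and the converse via the observation that $\T[M]\big|_{C(\Om)}$ is multiplicity free (cyclic) and hence determined up to unitary equivalence by the measure class of $M$; for part (ii), the identification of the commutant of $\T[M]$ with the $G$-invariant part of $L^\infty(\Om,M)$, which the paper states equivalently in terms of the projections $P_A$ attached to invariant mod $0$ sets. There are no gaps.
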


\begin{proof}
(i) The direct claim is evident: if $M$ is replaced by an equivalent measure, $f M$, then the operator of multiplication by $f^{-1/2}$ determines an isometric map $L^2(\Om, M)\to L^2(\Om, f M)$, which intertwines $\T[M]$ with $\T[f M]$. Conversely, the restriction of $\T[M]$ to $C(\Om)$ is the representation $\T_1$; it is multiplicity free, and its equivalence class is uniquely characterized by the equivalence class of $M$.

(ii) Denote by $\T[M]'$ the commutant of $\T[M]$ --- the algebra of operators on $L^2(\Om,M)$ commuting with the representation $\T[M]$. Given $A\in\Si$, let $P_A$ denote the operator of multiplication by the indicator function of $A$. It is a projection operator. If $A$ is invariant mod $0$, then, by the very construction of $\T[M]$, we have $P_A\in \T[M]'$. 

Conversely, let $P$ be a projection operator in $\T[M]'$. Because $P$ commutes with $\T_1$, it must be of the form $P_A$. Next, from the relations 
$$
P_A \T_2(g) h=\T_2(g) P_Ah,  \qquad  g\in G, \quad h\in L^2(\Om,M),
$$
it follows that $A$ must be invariant mod $0$. 

In this way we obtain a bijective correspondence between projection operators in the commutant $\T[M]'$ and equivalence classes of invariant mod $0$ subsets $A$. This implies the desired claim. 
\end{proof}

\section{The use of hyperoctahedral group $\SS\wr\Z_2$}\label{sect3}

In this section we specialize the correspondence $M\mapsto \T[M]$  to the case when $\Om:=\{0,1\}^\X$, where $\X$ is a countable or finite set.  We are mainly interested in the case of infinite $\X$, but for technical reasons we need the finite case, too. 

If $\X$ is finite, then $\Om$ is a finite set of cardinality $2^{|\X|}$. If
$\X$ is countable, then we equip $\Om$ with the product topology; then $\Om$ becomes a compact topological space. 

As $G$ we take the group $\SS=\SS(\X)$ of finitary permutations of $\X$ (recall that a permutation of a set is called \emph{finitary} if it moves only finitely many points). Of course, if $\X$ is finite, then $\SS$ is finite and consists of all permutations. 

The tautological action of the group $\SS$ on $\X$ induces, in a natural way,
its action on the space $\Om$ by homeomorphisms.

Next, we introduce the \emph{hyperoctahedral group} $\SS\wr\Z_2$, the wreath product of $\SS$ with $\Z_2$. Equivalently, it is the semidirect
product of $\SS$ with the abelian group $\mathcal E$ generated by elements
$\epsi_x$ (where $x$ ranges over $\X$), subject to the relations
$$
\epsi_x^2=e, \qquad \epsi_x\epsi_y=\epsi_y\epsi_x, \qquad x,y\in\X;
$$
the group $\SS$ acts on $\mathcal E$ by
$$
g(\epsi_x)=\epsi_{g(x)}, \qquad g\in\SS, \quad x\in\X,
$$
so that $g\epsi_x g^{-1}=\epsi_{g(x)}$ inside $\SS\wr\Z_2$.

For any finite or countable group $G$, let $\C[G]$ be its group algebra and $C^*[G]$ denote the $C^*$-envelope of $\C[G]$.  Of course, the two algebras are the same for finite $G$. 

\begin{proposition}\label{prop3.A}
Let, as above, $\Om=\{0,1\}^\X$. The crossed product algebra $C(\Om)\rtimes\SS$ is isomorphic to $C^*[\SS\wr\Z_2]$, the $C^*$-algebra of the hyperoctahedral group.
\end{proposition}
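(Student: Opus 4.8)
The plan is to recognize the commutative algebra $C(\Om)$ as the group $C^*$-algebra of the abelian part $\mathcal E$ of the hyperoctahedral group, and then to invoke the general principle that a crossed product by a full group $C^*$-algebra reassembles into the group $C^*$-algebra of the corresponding semidirect product. Concretely, the subgroup $\mathcal E\subset\SS\wr\Z_2$ is the restricted direct sum $\bigoplus_{x\in\X}\Z_2$, a discrete abelian group, and $\SS\wr\Z_2=\mathcal E\rtimes\SS$ with $\mathcal E$ normal. Since $\mathcal E$ is abelian, it is amenable, so $C^*[\mathcal E]=C^*_{\mathrm{red}}[\mathcal E]$ is commutative and, by Gelfand--Pontryagin duality, isomorphic to $C(\widehat{\mathcal E})$, the algebra of continuous functions on the compact Pontryagin dual $\widehat{\mathcal E}$.

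The first step is to identify $\widehat{\mathcal E}$ with $\Om$. A character $\chi$ of $\mathcal E$ is determined by the values $\chi(\epsi_x)\in\{\pm1\}$ (each generator has order two), and these may be prescribed independently for every $x\in\X$; hence the assignment $\chi\mapsto\om$, where $\chi(\epsi_x)=(-1)^{\om(x)}$, is a homeomorphism $\widehat{\mathcal E}\xrightarrow{\sim}\{0,1\}^\X=\Om$. Under the Gelfand transform, the unitary $\epsi_x\in C^*[\mathcal E]$ goes to the coordinate-type function $\om\mapsto(-1)^{\om(x)}$, and I would check that these functions separate points and generate $C(\Om)$, so the identification $C^*[\mathcal E]\cong C(\Om)$ is complete.

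The second step is to verify that this identification is $\SS$-equivariant. The group $\SS$ acts on $\mathcal E$ by $g(\epsi_x)=\epsi_{g(x)}$, hence on $C^*[\mathcal E]$ by automorphisms, hence on the spectrum $\widehat{\mathcal E}$ by the transpose-inverse (dual) action. A short computation on the generators shows that, after transporting along $\chi\mapsto\om$, the dual action becomes $(g\cdot\om)(x)=\om(g^{-1}(x))$, which is exactly the natural coordinate-permutation action of $\SS$ on $\Om$ used to form $C(\Om)\rtimes\SS$. The one point to handle with care here is the bookkeeping of inverses (whether the induced map is precomposition by $\alpha_g$ or by $\alpha_g^{-1}$), which is precisely where an off-by-inverse error could creep in; tracking it on a single generator $\epsi_x$ settles the direction unambiguously.

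The final step is to pass to crossed products. By the two previous steps there is an $\SS$-equivariant isomorphism $C^*[\mathcal E]\cong C(\Om)$, so the associated full crossed products are isomorphic, $C^*[\mathcal E]\rtimes\SS\cong C(\Om)\rtimes\SS$. It remains to identify the left-hand side with $C^*[\SS\wr\Z_2]$. This is the standard decomposition of the group $C^*$-algebra of a semidirect product: for discrete groups one has $C^*[\mathcal E\rtimes\SS]\cong C^*[\mathcal E]\rtimes\SS$, the cleanest proof being through the universal properties of both sides — a covariant representation of $(C^*[\mathcal E],\SS)$ is the same datum as a unitary representation of $\mathcal E\rtimes\SS$ (a representation of $\mathcal E$ together with one of $\SS$ satisfying the covariance/conjugation relation), so both algebras classify the same representations and must coincide. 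Since $\mathcal E\rtimes\SS=\SS\wr\Z_2$, chaining the isomorphisms yields $C(\Om)\rtimes\SS\cong C^*[\SS\wr\Z_2]$. The main obstacle is not any single computation but making sure the \emph{full} versions match throughout (the paper's crossed product is the full one of Definition \ref{def2.A}, and $C^*[\ccdot]$ denotes the full group $C^*$-algebra); amenability of $\mathcal E$ and of the locally finite group $\SS$ guarantees that full and reduced completions agree, so no discrepancy can arise.
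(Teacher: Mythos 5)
Your proof is correct and follows essentially the same route as the paper's: the paper likewise factors everything through the identification $C^*[\mathcal E]\cong C(\Om)$ (sending $\epsi_x$ to the function $d_x(\om)=1-2\om(x)=(-1)^{\om(x)}$, which is exactly your Gelfand transform of $\epsi_x$) and then concludes via the universal property of Definition \ref{def2.A}, by checking that covariant representations of $(C(\Om),\SS)$ are the same data as unitary representations of $\SS\wr\Z_2$. The only difference is packaging: where you invoke Pontryagin--Gelfand duality and the general fact $C^*[\mathcal E\rtimes\SS]\cong C^*[\mathcal E]\rtimes\SS$, the paper proves exactly these specializations by hand (linear independence of the $d_x$'s plus exhaustion of $\X$ by finite subsets for the first, and the gluing of a covariant pair $(\T_1|_{\mathcal E},\T_2)$ into a representation of the wreath product for the second), so your appeals to general theory are legitimate shortcuts rather than a different argument.
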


The exact form of the isomorphism is indicated in the proof. 

\begin{proof}
Introduce functions $d_x\in C(\Om)$ indexed by points $x\in\X$:
$$
d_x(\om)=1-2\om(x), \qquad \om\in\Om.
$$
In other words, $d_x(\om)$ equals $1$ or $-1$ depending on whether $\om(x)$ equals $0$ or $1$. 

The correspondence $\epsi_x\mapsto d_x$ extends to an embedding of the group
$\mathcal E$ into the group of unitary elements of the algebra $C(\Om)$, which
leads to a morphism $C^*[\mathcal E]\to C(\Om)$. We claim that it is an
isomorphism.

Indeed, consider first the case when $\X$ is finite; then the claim simply
follows from the fact that $C(\Om)$ coincides with the linear span of the group
generated by the elements $d_x$ and those elements are linearly independent. To
handle the case of countable $\X$, we choose an ascending chain of finite subsets
$\X_n$ exhausting $\X$.

The fact that $C^*[\mathcal E]\to C(\Om)$ is an isomorphism is the key
observation, the remainder of the argument being routine. Indeed, this
isomorphism allows us to identify $C(\Om)$ with $C^*[\mathcal E]$, which is a
subalgebra of $C^*[\SS\wr\Z_2]$. Next, $\SS$ is also contained in $C^*[\SS\wr\Z_2]$,
and $C(\Om)$ and $\SS$ together generate $C^*[\SS\wr\Z_2]$. It remains to prove that
every covariant representation $(\T_1,\T_2)$ of $(C(\Om),\SS)$ is factored
through $C^*[\SS\wr\Z_2]$.

To do this, observe that the restriction of $\T_1$ to $\mathcal E$ produces a
unitary representation of this group. Moreover, the covariance property ensures
the commutation relation
$$
\T_2(g)\T_1(\epsi_x)\T_2(g^{-1})=\T_1(\epsi_{g(x)}), \qquad g\in\SS, \quad x\in\X,
$$
which just means that $\T_1\big|_{\mathcal E}$ and $\T_2$ are glued to a
unitary representation of $\SS\wr\Z_2$, which in turn is the same as a
representation of $C^*[\SS\wr\Z_2]$.
\end{proof}

By virtue of the isomorphism between $C(\Om)\rtimes \SS$ and $C^*[\SS\wr\Z_2]$, every representation of the form $\T[M]$ (see Proposition \ref{prop2.B}) can be viewed as a representation of $C^*[\SS\wr\Z_2]$.

The next proposition states that all such representations factor through a proper
quotient of the algebra $C^*[\SS\wr\Z_2]$.

Denote by $I$ the closed two-sided ideal in $C^*[\SS\wr\Z_2]$ generated by the
elements of the form
\begin{equation}\label{eq3.A}
(1-s_{x,y})(1-\epsi_x)(1-\epsi_y), \quad (1-s_{x,y})(1+\epsi_x)(1+\epsi_y),
\end{equation}
where $x$ and $y$ are arbitrary distinct points of $\X$ and
$s_{x,y}\in\SS$ is the corresponding transposition (that is, it switches
$x$ with $y$ and leaves invariant all points from $\X\setminus\{x,y\}$).

\begin{proposition}\label{prop3.B}
For any $\SS$-quasi-invariant measure $M$, the associated representation $\T[M]$ is trivial
on the ideal $I$.
\end{proposition}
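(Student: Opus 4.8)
The plan is to reduce everything to a short computation on the generators \eqref{eq3.A}. Since $\Ker\T[M]$ is a closed two-sided ideal of $C^*[\SS\wr\Z_2]$, being the kernel of a $*$-representation, it suffices to check that $\T[M]$ annihilates each generating element; the ideal $I$ they generate is then automatically contained in $\Ker\T[M]$, which is exactly the assertion.

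Fix distinct $x,y\in\X$ and treat the first generator $(1-s_{x,y})(1-\epsi_x)(1-\epsi_y)$. Under the identification of Proposition \ref{prop3.A}, the factor $(1-\epsi_x)(1-\epsi_y)$ sits in the copy of $C(\Om)\cong C^*[\mathcal E]$, and $\T_1$ sends it to multiplication by $(1-d_x)(1-d_y)=4\,\om(x)\om(y)$, i.e.\ to $4P_A$, where $P_A$ denotes multiplication by the indicator of
$$
A:=\{\om\in\Om:\ \om(x)=\om(y)=1\}.
$$
Because $s_{x,y}$ merely interchanges $\epsi_x$ and $\epsi_y$, it commutes with $(1-\epsi_x)(1-\epsi_y)$ inside $C^*[\SS\wr\Z_2]$, so the image of the generator under $\T[M]$ equals $4P_A(I-U)$ with $U:=\T_2(s_{x,y})$. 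The whole matter thus reduces to the operator identity $P_AU=P_A$.

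The key observation is geometric: the transposition $s_{x,y}$ fixes \emph{every} point of $A$, since interchanging the values of $\om$ at $x$ and $y$ changes nothing when both equal $1$. Hence $s_{x,y}(B)=B$ for every Borel $B\subseteq A$, so ${}^{s_{x,y}}\!M$ and $M$ agree on subsets of $A$ and the cocycle \eqref{eq2.A} satisfies $\phi(\ccdot,s_{x,y})=1$ $M$-almost everywhere on $A$. Feeding $s_{x,y}(\om)=\om$ and $\phi^{1/2}(\om,s_{x,y})=1$ for $\om\in A$ into \eqref{eq2.B}, I find for every $h\in L^2(\Om,M)$ that $(P_AUh)(\om)=\one_A(\om)\,h(s_{x,y}(\om))\,\phi^{1/2}(\om,s_{x,y})$ equals $\one_A(\om)h(\om)=(P_Ah)(\om)$ on $A$ and vanishes off $A$. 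Therefore $P_AU=P_A$, so $P_A(I-U)=0$ and the first generator is killed. The second family is handled verbatim, with $A$ replaced by $\{\om:\om(x)=\om(y)=0\}$, on which $s_{x,y}$ again acts trivially.

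I expect the only genuinely delicate point to be the justification that the Radon--Nikod\'ym cocycle equals $1$ almost everywhere on $A$; the rest is bookkeeping, once one notices that the generators of $I$ are built precisely out of the sets on which the corresponding transposition acts as the identity.
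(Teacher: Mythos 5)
Your proof is correct and follows essentially the same route as the paper's: both identify $\tfrac14\T[M]((1\mp\epsi_x)(1\mp\epsi_y))$ with the projection onto functions supported on the set where $\om(x)=\om(y)$ takes a common value, observe that $s_{x,y}$ fixes that set pointwise so the Radon--Nikod\'ym cocycle is $1$ there, and conclude that $\T[M](1-s_{x,y})$ kills the relevant subspace, generator by generator. Your only additions --- commuting the factors to reduce to $P_AU=P_A$, and spelling out why the cocycle equals $1$ a.e.\ on $A$ --- are harmless refinements of the same argument.
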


\begin{proof}
Given $x\in\Om$, the space $\Om$ can be written as the disjoint union of two
subsets, $\Om_x^1\sqcup\Om_x^0$, where
$$
\Om_x^1=\{\om\in\Om: \om(x)=1\}, \quad \Om_x^0=\{\om\in\Om: \om(x)=0\}.
$$

The key observation is that the restriction of the Radon-Nikod\'ym cocycle
$\phi(\,\cdot\,,s_{x,y})$ onto $\Om_x^1\cap\Om_y^1$ or $\Om_x^0\cap\Om_y^0$
is identically equal to 1, because the transposition $s_{x,y}$ acts trivially
on these subsets. It follows that if a function $h \in L^2(\Om,M)$ is
supported by $\Om_x^1\cap\Om_y^1$ or by $\Om_x^0\cap\Om_y^0$, then
$\T[M](s_{x,y})h=h$, so that $\T[M](1-s_{x,y})h=0$.

On the other hand, by the very definition, the operator $\frac12 \T[M](1-\epsi_x)$  is
the projection onto the subspace of functions supported by $\Om_x^1$.
Likewise, the operator $\frac12\T[M](1+\epsi_x)$ is the projection onto the
complementary subspace formed by functions supported by $\Om_x^0$. Therefore,
the operators
$$
\textrm{$\tfrac14\T[M]((1-\epsi_x)(1-\epsi_y))$ and
$\tfrac14 \T[M]((1+\epsi_x)(1+\epsi_y))$}
$$
are the projections onto the subspaces of functions supported by the subsets
$\Om_x^1\cap\Om_y^1$ and $\Om_x^0\cap\Om_y^0$, respectively. As mentioned above, on
these two subspaces, the operator $\T[M](1-s_{x,y})$ vanishes. This completes the proof.
\end{proof}

\section{From hyperoctahedral group to algebra $\A^0$}\label{sect4}

\subsection{Isomorphism $C^*[\SS\wr\Z_2]/I\to\A^0$}

Note that the generators \eqref{eq3.A} of the ideal $I\subset C^*[\SS\wr\Z_2]$ are
selfadjoint, so that the quotient $C^*[\SS\wr\Z_2]/I$ is a $C^*$-algebra.

The next construction depends on the choice of a linear order on the set $\X$
such that all intervals are finite. For finite $\X$, the latter requirement holds automatically, and for infinite $\X$ it means that the ordered set $(\X,<)$ is
isomorphic to one of the three ordered sets $\Z_{>0}, \Z_{<0}, \Z$. Actually,
reversing the order is unessential for us, so that there are only two
essentially distinct model examples, $\Z_{>0}$ and $\Z$.

Recall that the group $\SS\wr\Z_2$ is generated by the elements $\epsi_x$ and $s_{x,y}$, where $x,y\in\X$, $x\ne y$, and the definition of the algebra $\A^0$ was given in Section \ref{sect1.2}.

\begin{theorem}\label{thm4.A}
We fix a linear order on $\X$ with finite intervals and introduce the following elements of\/ $\A^0$ indexed by the elements $x\in\X$ and by the pairs $x,y\in\X$ such that $x<y${\rm:}
\begin{equation}\label{eq4.B}
\eta_x:=1-2a^+_xa^-_x,  \qquad \eta_{(x,y)}:=\prod_{z:\, x<z<y}\eta_z.
\end{equation}

{\rm(i)}  There exists a surjective morphism of $C^*$-algebras $p:C^*[\SS\wr\Z_2]\to\A^0$, uniquely determined by the correspondence
\begin{gather}
p(\epsi_x):=\eta_x,     \label{eq4.C}\\
p(s_{x,y}):=\frac{1+\eta_x\eta_y}2+\frac{1+\eta_x\eta_y+(1-\eta_x\eta_y)\eta_{(x,y)}}2(a^+_xa_y^-+a^+_ya_x^-),
\quad x<y. \label{eq4.D}
\end{gather}

{\rm(ii)} The kernel of $p$ equals $I$, so that $p$ determines an
isomorphism $C^*[\SS\wr\Z_2]/I\to\A^0$.
\end{theorem}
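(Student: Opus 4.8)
The plan is to verify that the formulas \eqref{eq4.C}–\eqref{eq4.D} define a $*$-homomorphism on $C^*[\SS\wr\Z_2]$, then compute its kernel. The group $C^*$-algebra $C^*[\SS\wr\Z_2]$ is universal, so to produce a morphism $p$ out of it, it suffices to produce a unitary representation of the group $\SS\wr\Z_2$ inside (the multiplier/unital algebra of) $\A^0$ — that is, to assign unitaries to the generators $\epsi_x$ and $s_{x,y}$ satisfying the defining relations of the hyperoctahedral group. So step one is to check that the elements $\eta_x$ and the proposed images of the transpositions $s_{x,y}$ are unitary in $\A^0$ and satisfy: (a) $\eta_x^2=1$ and $\eta_x\eta_y=\eta_y\eta_x$; (b) the Coxeter-type relations among the $s_{x,y}$ (each an involution, with braid and commutation relations); and (c) the wreath compatibility $s_{x,y}\,\eta_z\,s_{x,y}^{-1}=\eta_{\sigma(z)}$ where $\sigma$ is the transposition. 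Let me think about what computations these require.

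Let me set up the key algebraic facts. In $\A^0$ one has $a^+_xa^-_x$ a projection, so $\eta_x=1-2a^+_xa^-_x$ is a self-adjoint unitary (a symmetry), and distinct $\eta_x,\eta_y$ commute by the CAR relations; this gives (a) immediately and also shows $p(\epsi_x)$ is unitary with $p(\epsi_x)^2=1$. For the transpositions, the natural guess — and what \eqref{eq4.D} is engineered to deliver — is that $p(s_{x,y})$ should implement, on the fermionic Fock-type level, the swap of sites $x$ and $y$ together with the correct sign bookkeeping (the factor $\eta_{(x,y)}$ counts the parity of occupied sites strictly between $x$ and $y$, which is exactly the Jordan–Wigner string needed to make the swap anticommutation-consistent). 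I would verify unitarity of \eqref{eq4.D} by a direct computation using $(a^+_xa^-_y+a^+_ya^-_x)^2$ and the fact that $\eta_x,\eta_y$ commute with this operator in a controlled way; the string $\eta_{(x,y)}$ is a product of symmetries at the intermediate sites and is where the finite-interval hypothesis on the order is used (the product is finite, hence a well-defined element of $\A^0$). Checking the involution $p(s_{x,y})^2=1$ and the braid relations is the most laborious part, but it is a finite CAR computation for each relation.

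The genuinely new content is part (ii): $\Ker p=I$. The inclusion $I\subseteq\Ker p$ is a computation — one substitutes \eqref{eq4.C}–\eqref{eq4.D} into the two generators \eqref{eq3.A} of $I$ and checks $p$ annihilates them. Concretely, $p((1-s_{x,y})(1-\epsi_x)(1-\epsi_y))$ should vanish because $\frac14(1-\eta_x)(1-\eta_y)$ is the projection onto the subspace where both sites $x,y$ are occupied (i.e. $a^+_xa^-_x=a^+_ya^-_y=1$), and on that subspace the swap operator $p(s_{x,y})$ acts as the identity — matching the geometric fact used in Proposition~\ref{prop3.B}; symmetrically with the signs reversed for the empty–empty projection $\frac14(1+\eta_x)(1+\eta_y)$. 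The reverse inclusion $\Ker p\subseteq I$ is the hard part: one must show $p$ descends to an \emph{injective} map on the quotient $C^*[\SS\wr\Z_2]/I$. I would approach this by identifying both $C^*[\SS\wr\Z_2]/I$ and $\A^0$ with a common concrete model — the algebra has a well-understood structure (a GICAR/AF-type algebra whose primitive spectrum or whose finite-dimensional approximants can be matched level by level along the ascending chain $\X_n$ used in Proposition~\ref{prop3.A}). The cleanest route is to exhibit enough states or representations separating points of $C^*[\SS\wr\Z_2]/I$ and show each factors through $p$, equivalently to check surjectivity together with a dimension/trace count on finite truncations $\X_n$ so that $p$ restricts to an isomorphism of the finite-dimensional subalgebras and then pass to the inductive limit.

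The main obstacle I anticipate is precisely this injectivity on the quotient: surjectivity of $p$ is plausible since the $a^+_xa^-_y$ and the projections generate $\A^0$ and one can try to solve \eqref{eq4.D} for $a^+_xa^-_y$ in terms of the $p$-images, but proving $I$ is the \emph{full} kernel requires understanding the ideal structure well enough to rule out any larger kernel. I expect the right tool is the AF/inductive-limit structure: reduce to finite $\X_n$, where $C^*[\SS\wr\Z_2]/I$ and $\A^0(\X_n)$ are finite-dimensional, compute their Wedderburn decompositions, and verify $p$ matches the simple summands bijectively; compatibility with the chain $\X_n\subset\X_{n+1}$ then yields the isomorphism in the limit. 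This finite-dimensional reduction turns an abstract ideal computation into a concrete representation-theoretic bookkeeping that the finite-interval ordering makes tractable.
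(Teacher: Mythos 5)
Your plan is sound in outline and, for part (ii), lands on essentially the same strategy as the paper; part (i) is where you genuinely diverge. You propose to verify the wreath-product presentation directly in $\A^0$ (involutivity of $p(s_{x,y})$, braid and commutation relations, conjugation of the $\eta_z$'s), and then invoke universality of $C^*[\SS\wr\Z_2]$. The paper avoids all of these computations: it first reduces to finite $\X=[N]$ (passing to the inductive limit only at the very end, using locality of $p$ --- your observation that the finiteness of the Jordan--Wigner string is what makes this work is correct), and on $\bigwedge\C^N$ it exhibits two concrete representations, the permutation-with-signs representation $\T$ of $\C[\SS\wr\Z_2]$ and the faithful Fock representation $\mathscr F$ of $\A^0$, and then checks only the two identities $\T(\epsi_x)=\mathscr F(p(\epsi_x))$ and $\T(s_{x,y})=\mathscr F(p(s_{x,y}))$ on basis vectors (four cases according to the occupancy of $x,y$). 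Since $\T$ is manifestly a group representation and $\mathscr F$ is injective, the group relations for the elements \eqref{eq4.C}--\eqref{eq4.D} come for free. This buys two things: it spares the heavy string-manipulation computations your route requires (the braid relation for $x<y<z$, with the three interleaved strings $\eta_{(x,y)},\eta_{(y,z)},\eta_{(x,z)}$, is genuinely unpleasant), and it simultaneously sets up part (ii), because the decomposition of $\bigwedge\C^N$ into the homogeneous components $\bigwedge^m\C^N$ immediately identifies the irreducible representations killing $\Ker p$ as the $N+1$ representations $\mathscr F_m\circ p$.

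On part (ii) your plan (finite-dimensional Wedderburn matching plus inductive limit) is the paper's plan, but the crux is left unspecified: to ``match the simple summands'' you must determine exactly which irreducible representations of the finite group $\SS_N\wr\Z_2$ annihilate $I$, and show that these are precisely the $\mathscr F_m\circ p$. The paper does this via the Mackey-machine description of the irreducible representations of $\SS_N\ltimes\Z_2^N$: they are induced from $\rho\otimes\chi_m$ on the stabilizer subgroups $(\SS_m\times\SS_{N-m})\ltimes\Z_2^N$; the ones killing $\Ker p$ are exactly those with $\rho$ trivial; and any representation with $\rho$ nontrivial fails to kill $I$, because on the inducing representation $\epsi_1$ and $\epsi_2$ act by the scalar $-1$ while $1-s_{1,2}$ acts nontrivially, so the generator $(1-s_{1,2})(1-\epsi_1)(1-\epsi_2)$ does not vanish. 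Without this computation (or an equivalent count of simple summands on both sides), your claim that $\Ker p$ cannot be strictly larger than $I$ remains a plan rather than a proof. Your verification of the easy inclusion $I\subseteq\Ker p$ via the occupied/empty projections is correct, as is your identification of where the finite-interval hypothesis enters.
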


We first verify the claim of the theorem in the case when $|\X|<\infty$. This constitutes the main part of the proof.  Then we extend the result to the case $|\X|=\infty$. 

\begin{proposition}\label{prop4.A}
The claim of the theorem holds true when $\X$ is finite. 
\end{proposition}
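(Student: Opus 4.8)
The plan is to verify, for finite $\X$, the three assertions bundled in Theorem \ref{thm4.A}: that the assignment \eqref{eq4.C}--\eqref{eq4.D} actually defines a morphism $p\colon C^*[\SS\wr\Z_2]\to\A^0$ (i.e. the images satisfy the defining relations of $\SS\wr\Z_2$ and are unitary), that $p$ is surjective, and that $\ker p = I$. Since $\X$ is finite, $\A^0$ is finite-dimensional and $C^*[\SS\wr\Z_2]$ is just the group algebra, so all $C^*$-subtleties disappear and everything reduces to finite linear algebra inside $\A^0$.

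First I would establish that $p$ is a well-defined homomorphism. The group $\SS\wr\Z_2$ is generated by the $\epsi_x$ and the transpositions $s_{x,y}$, so it suffices to check that the proposed images satisfy the corresponding relations. The elements $\eta_x = 1-2a^+_xa^-_x$ are self-adjoint involutions (since $a^+_xa^-_x$ is a projection), they commute with one another, and $\eta_{(x,y)}$ commutes with $\eta_x,\eta_y$; this immediately gives $p(\epsi_x)^2=1$ and the commutativity among the $p(\epsi_x)$, handling the relations inside $\mathcal E$. The genuine work is to verify, directly from the CAR relations, that the element defined by \eqref{eq4.D} is a self-adjoint involution (so $p(s_{x,y})$ is unitary of order two), that distinct images conjugate the $\eta$'s correctly, i.e. $p(s_{x,y})\,\eta_z\,p(s_{x,y})^{-1}=\eta_{s_{x,y}(z)}$, and that the braid/Coxeter relations among the $s_{x,y}$ hold. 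I expect these to follow from careful but routine manipulation of the bilinear expressions $a^+_xa^-_y+a^+_ya^-_x$, using that these quadratic elements act as ``hopping'' operators and that the string $\eta_{(x,y)}$ supplies exactly the sign factor needed to make the fermionic transposition square to the identity.

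Next, surjectivity. Here the key point is that $\A^0$ is generated by the elements $a^+_xa^-_y$. The diagonal generators $a^+_xa^-_x=\tfrac12(1-\eta_x)$ are manifestly in the image, and for $x<y$ the image of $s_{x,y}$ in \eqref{eq4.D} contains the off-diagonal combination $a^+_xa^-_y+a^+_ya^-_x$ up to an invertible coefficient built from the $\eta$'s; combining $p(s_{x,y})$ with the (already available) diagonal elements should let me isolate each $a^+_xa^-_y$ individually. Because $\X$ is finite and every pair can be reached by composing adjacent transpositions, I would argue that the image of $p$ is a subalgebra of $\A^0$ containing all the generators, hence equals $\A^0$.

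Finally, $\ker p = I$. The inclusion $I\subseteq\ker p$ is a computation: substitute the images \eqref{eq4.C}--\eqref{eq4.D} into the two generators \eqref{eq3.A} of $I$ and check they map to $0$ in $\A^0$, which is where the specific shape of the coefficient in \eqref{eq4.D} should pay off. For the reverse inclusion, I would pass to the quotient $\bar p\colon C^*[\SS\wr\Z_2]/I\to\A^0$ and prove it is injective, for instance by a dimension count: in the finite case both sides are finite-dimensional, so it suffices to show $\dim\bigl(C^*[\SS\wr\Z_2]/I\bigr)\le\dim\A^0$, after which surjectivity forces a bijection. The main obstacle, I expect, is precisely controlling the quotient dimension --- i.e. understanding the ideal $I$ well enough to bound $\dim\bigl(C^*[\SS\wr\Z_2]/I\bigr)$ from above; this may require identifying the irreducible representations of $\SS\wr\Z_2$ that survive in the quotient (equivalently, those arising from $\SS$-quasi-invariant measures on the finite $\Om$) and matching their total dimension against $\dim\A^0=\binom{2|\X|}{|\X|}$-type data for the gauge-invariant CAR algebra.
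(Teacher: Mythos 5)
Your architecture (well-definedness of $p$, surjectivity, $\ker p=I$) matches what must be proved, and your surjectivity step is essentially the paper's: one checks $\zeta_{x,y}^2=1$ for the coefficient $\zeta_{x,y}=\frac{1+\eta_x\eta_y+(1-\eta_x\eta_y)\eta_{(x,y)}}2$ and then multiplies by the diagonal projections $a^+_xa^-_x$ to isolate $a^+_xa^-_y$ and $a^+_ya^-_x$. But the two hard steps of your plan are deferred rather than proved, and in both cases the missing content is exactly where the paper's ideas live. First, well-definedness: you propose to verify directly in $\A^0$ that the elements \eqref{eq4.D} are involutions, conjugate the $\eta_z$ correctly, and satisfy the Coxeter-type relations among transpositions, calling this ``careful but routine manipulation.'' It is not routine: the Jordan--Wigner strings $\eta_{(x,y)}$ make the direct check of, say, $p(s_{x,y})p(s_{y,z})p(s_{x,y})=p(s_{x,z})$ for non-adjacent pairs genuinely delicate, and you give no argument for it. The paper sidesteps this entirely by a device absent from your proposal: it realizes $\A^0$ faithfully on the exterior algebra $\bigwedge\C^N$ via the standard fermionic representation $\mathscr F$, takes the natural permutation/sign representation $\T$ of $\SS\wr\Z_2$ on the same space, and verifies on each basis vector $e_\om$ (a four-case check) that $\mathscr F(p(\epsi_x))=\T(\epsi_x)$ and $\mathscr F(p(s_{x,y}))=\T(s_{x,y})$. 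Since $\T$ is a group representation and $\mathscr F$ is faithful and injective on $\A^0$, every group relation holds automatically for the elements $p(\cdot)$; no braid relation ever needs to be computed inside the CAR algebra.

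Second, the kernel. Your dimension count $\dim\bigl(C^*[\SS\wr\Z_2]/I\bigr)\le\dim\A^0$ is formally equivalent to what the paper proves, but the bound itself \emph{is} the problem: to control the quotient you must determine which irreducible representations of $\SS_N\ltimes\Z_2^N$ annihilate $I$, and you only gesture at this. The paper supplies the argument: irreducibles are the induced representations $\pi_{\rho,\chi_m}$ indexed by a character $\chi_m$ of $\Z_2^N$ and an irreducible $\rho=\rho'\otimes\rho''$ of the stabilizer $\SS_m\times\SS_{N-m}$; if $\rho$ is nontrivial, then already in the inducing representation one of the ideal's generators, e.g.\ $(1-s_{1,2})(1-\epsi_1)(1-\epsi_2)$, acts nonzero (the $\epsi$'s act by $-1$, so the $\epsi$-factors act as the scalar $4$, while $1-s_{1,2}$ is nonzero since $\rho'$ is nontrivial). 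Hence only the ``elementary'' representations (with $\rho$ trivial) kill $I$; these are precisely the compositions $\mathscr F_m\circ p$ with the irreducible blocks of $\mathscr F$ on $\bigwedge^m\C^N$, and semisimplicity then forces $\ker p=I$ (equivalently, your dimension count closes, since $\sum_m\binom Nm^2=\binom{2N}{N}=\dim\A^0$). Without this representation-theoretic lemma your plan has no way to bound the quotient. One further simplification you miss: the inclusion $I\subseteq\ker p$ need not be done by brute substitution into \eqref{eq4.D}; since $\T$ is the representation associated with the counting measure, it kills $I$ by Proposition \ref{prop3.B}, and faithfulness of $\mathscr F$ transfers this to $p$.
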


\begin{proof}
Let $N:=|\X|$ and $[N]:=\{1,2,\dots,N\}$. We may assume that $(\X,<)=([N] ,<)$, where the order in $[N]$ is the conventional one.  The space $\Om$ has cardinality $2^N$, and its elements are arbitrary subsets $\om\subseteq[N]$. The $C^*$-algebra of the finite group $\SS\wr\Z_2$ coincides with its group algebra $\C[\SS\wr\Z_2]$. 

Consider the exterior algebra
$\bigwedge\C^N$. It has a distinguished basis $\{e_\om\}$, indexed by arbitrary
subsets $\om\subseteq[N]$: by definition, $e_\om$ is the polyvector
$e_{x_1}\wedge\dots\wedge e_{x_n}$, where $x_1,\dots,x_n$ are  the
elements of $\om$ written in the descending order $x_1>\dots>x_n$. 

The plan of the proof in the finite case is the following. We will deal with two representations in the
same space $\bigwedge\C^N$. One is a representation $\T$ of the group algebra
$\C[\SS\wr\Z_2]$; we show that the kernel of $\T$ equals $I$, so that it is in fact a faithful 
representation of the quotient algebra $C^*[\SS\wr\Z_2]/I$. The other is a faithful
representation $\mathscr F$ of the algebra $\A^0$. We prove that the two representations are consistent with $p$:
\begin{equation}\label{eq4.E}
\T(\epsi_x)=\mathscr F(p(\epsi_x)), \qquad \T(s_{x,y})=\mathscr F(p(s_{x,y})).
\end{equation}
We know that the elements $\epsi_x$ and the elements $s_{x,y}$ together
generate the algebra $\C[\SS\wr\Z_2]$. On the other hand, we also prove that their
images, $p(\epsi_x)$ and $p(s_{x,y})$,  generate the algebra
$\A^0$. This will imply the claim of the proposition.

We proceed to the realization of this plan.

\smallskip

\emph{Step} 1 (Definition of representation $\T$). By definition, the operators $\T(\epsi_x)$ and $\T(s_{x,y})$ act on the basis vectors as follows:
$$
\T(\epsi_x)e_\om=\begin{cases} -e_\om, & x\in \om, \\ e_\om, & x\notin \om,
\end{cases}
$$
and
$$
\T(s_{x,y})e_\om=e_{s_{x,y}(\om)}.
$$
Obviously, they define a representation of the hyperoctahedral group $\SS\wr\Z_2$ and hence of its group algebra. 

Note that the representation $\T$ is in fact the representation $\T[M]$ associated with the counting measure $M$ on $[N]$. More precisely, this holds true under the isomorphism $C(\Om)\rtimes \SS\leftrightarrow  \C[\SS\wr\Z_2]$ and the  isomorphism $\ell^2(\{0,1\}^{[N]})\to\wedge\C^N$ that assigns to each delta function $\de_\om\in\ell^2(\{0,1\}^{[N]})$ the polyvector $e_\om$.  

\smallskip

\emph{Step} 2 (Definition of representation $\mathscr F$). The operators $\mathscr F(a^+_x)$ and $\mathscr F(a^-_x)$ on $\wedge\C^N$ are defined in a standard way. Namely, we set
$$
i(x,\om):=|\{y\in\om: y>x\}|
$$
and then define
$$
\mathscr F(a^+_x)e_\om=e_x\wedge e_\om=\begin{cases}(-1)^{i(x,\om)}e_{\om\cup x}, & x\notin \om,\\
0, & x\in \om, \end{cases}
$$
and
$$
\mathscr F(a^-_x)e_\om=\begin{cases}(-1)^{i(x,\om)}e_{\om\setminus x}, & x\in \om,\\
0, & x\notin \om. \end{cases}
$$

\smallskip

{\it Step\/} 3 (Verification of the first equality in \eqref{eq4.E}). The above
formulas for $\FF$ imply that 
$$
\FF(a^+_xa^-_x)e_\om=\begin{cases} e_\om, & x\in \om, \\ 0, & x\notin \om, \end{cases}.
$$
It follows that
\begin{equation}\label{eq4.F}
\mathscr F(\eta_x)e_\om=\mathscr F(1-2a^+_xa^-_x)e_\om=\begin{cases} -e_\om, & x\in \om, \\ e_\om, & x\notin \om,
\end{cases}
\end{equation}
which coincides with $\T(\epsi_x)e_\om$. Thus,  $\T(\epsi_x)=\mathscr F(\eta_x)$, as desired.

\smallskip

{\it Step\/} 4 (Verification of the second equality in \eqref{eq4.E}). Fix $x<y$
in $[N]$ and set $\om':=s_{x,y}(\om)$. Since $\T(s_{x,y})e_\om=e_{\om'}$, we have to prove
that
$$
\mathscr F(p(s_{x,y}))e_\om=e_{\om'}, \qquad \om\subseteq[N].
$$
Let us examine the following four possible cases (below we use the definitions \eqref{eq4.B}, \eqref{eq4.C}, and \eqref{eq4.D}). 

(i) $\om$ contains neither $x$ nor $y$. Then $\om'=\om$. On the
other hand, both $\mathscr F(a^-_y)$ and $\mathscr F(a^-_x)$ annihilate $e_\om$. Therefore, $\mathscr F(a^+_xa^-_y+a^+_ya^-_x)e_\om=0$. From \eqref{eq4.D} we obtain
$$
\mathscr F(p(s_{x,y}))e_\om=\frac12 \mathscr F(1+\eta_x\eta_y)e_\om=e_\om,
$$
where the second equality holds because both $\mathscr F(\eta_x)$ and $\mathscr F(\eta_y)$ leave $e_\om$ invariant by virtue of \eqref{eq4.F}.

(ii) $\om$ contains both $x$ and $y$. Then again $\om'=\om$ and both
$\mathscr F(a^+_y)$ and $\mathscr F(a^+_x)$ annihilate $e_\om$. Therefore, $\mathscr F(a^+_xa^-_y+a^+_ya^-_x)e_\om=-\mathscr F(a^-_ya^+_x+a^-_xa^+_y)e_\om=0$. From \eqref{eq4.D} we obtain
$$
\mathscr F(p(s_{x,y})e_\om=\frac12\mathscr F(1+\eta_x\eta_y)e_\om=e_\om,
$$
where the second equality holds because both $\mathscr F(\eta_x)$ and $\mathscr F(\eta_y)$ multiply $e_\om$ by $-1$, see \eqref{eq4.F}.

(iii) $\om$ contains $x$ but not $y$. 
Then $\om':=\om\setminus\{x\}\cup\{y\}$. We have
$$
\mathscr F(a^+_xa^-_y+a^+_ya^-_x)e_\om=\mathscr F(a^+_ya^-_x)e_\om=(-1)^{\ell(x,y)}e_{\om'},
$$
where
$$
\ell(x,y):=|\{z\in \om: x<z<y\}|.
$$

Next, $\mathscr F(\eta_x\eta_y)e_\om=-e_\om$, so that $\mathscr F(1+\eta_x\eta_y)e_\om=0$. Likewise,
$\mathscr F(\eta_x\eta_y)e_{\om'}=-e_{\om'}$ and $\mathscr F(1+\eta_x\eta_y)e_{\om'}=0$. Therefore, we obtain from \eqref{eq4.D}
$$
\mathscr F(p(s_{x,y})e_\om=(-1)^{\ell(x,y)}\frac12\mathscr F(1-\eta_x\eta_y)\FF(\eta_{(x,y)})e_{\om'}.
$$
Observe now that $\mathscr F(\eta_{(x,y)})e_{\om'}=(-1)^{\ell(x,y)}e_{\om'}$. This finally gives
$$
\mathscr F(p(s_{x,y})e_\om=\frac12\mathscr F(1-\eta_x\eta_y)e_{\om'}=e_{\om'},
$$
as desired.

(iv) $\om$ contains $y$ but not $x$. Then
$\om':=\om\setminus\{y\}\cup\{x\}$. Just the same argument as in (iii)
shows that $\mathscr F(p(s_{x,y}))e_\om=e_{\om'}$.

\smallskip

\emph{Step} 5 (Surjectivity of $p$). The above steps show that $p$
extends uniquely to an algebra morphism $\C[\SS\wr\Z_2]\to\A^0$. We are going to
prove that its image, which we denote by $\operatorname{Im} p$, is the whole algebra $\A^0$. It suffices to check
that for any pair $x<y$,  both $a^+_xa^-_y$ and $a^+_ya^-_x$ are contained in $\operatorname{Im} p$.

Obviously, $\operatorname{Im} p$ contains the elements $\eta_z$ for arbitrary
$z\in[N]$. It also contains the element $p(s_{x,y})$. Therefore,
denoting
$$
\zeta_{x,y}:=\frac{1+\eta_x\eta_y+(1-\eta_x\eta_y)\eta_{(x,y)}}2,
$$
we see that $\operatorname{Im} p$ contains the element $\zeta_{x,y}(a^+_xa^-_y+a^+_ya^-_x)$.

Next, observe that $\zeta_{x,y}^2=1$. Indeed, this is directly checked using the fact that the elements $\eta_x$, $\eta_y$, and $\eta_{(x,y)}$ pairwise commute and their squares are equal to $1$. 

Thus, $\operatorname{Im} p$ contains  the element $a^+_xa^-_y+a^+_ya^-_x$. Observe that $\operatorname{Im} p$ also contains all elements of the form $a^+_za^-_z$, $z\in[N]$. Now, multiplying
$a^+_xa^-_y+a^+_ya^-_x$ on the left by $a^+_xa^-_x$ we get $a^+_xa^-_y$. Likewise,
multiplication on the left by $a^+_ya^-_y$ extracts $a^+_ya^-_x$.

\smallskip

\emph{Step} 6 (The kernel of $p$). Let $\Ker p$ denote the kernel of $p$. We are going to prove that  $\Ker p=I$. 

As noted above, $\T$ is essentially the canonical representation associated with the counting measure on $[N]$. It follows that $\Ker p$ contains $I$, by virtue of Proposition \ref{prop3.A}. Of course, this can also be verified directly from the definition of $\T$. The point is that $\Ker p$ cannot be strictly greater than $I$, which is not so evident. 

Since $\C[\SS\wr\Z_2]$ is a finite-dimensional semisimple algebra, every two-sided
ideal is uniquely characterized by the set of irreducible representations that
are trivial on it. We will show  that such irreducible representations are the
same for $\Ker p$ and for $I$; then the desired equality $\Ker p=I$ will follow.

The representation $\mathscr F$ of the algebra $\A^0$ is faithful and 
decomposes into the multiplicity free direct sum of $N+1$ irreducible
representations realized in the homogeneous components
$\bigwedge^m\C^N\subset\bigwedge\C^N$, $0\le m\le N$. Let us denote these
representations as $\mathscr F_m$. Then the representations $\mathscr F_m\circ p$ are precisely those
irreducible representations of $\C[\SS\wr\Z_2]$ that kill the ideal $\Ker p$.

On the other hand, the representations of the algebra $\C[\SS\wr\Z_2]$ are the same
as the representations of the group $\SS\wr\Z_2$. Recall that this group is the semidirect
product of $\SS_N$, the finite symmetric group of degree $N$, and the
commutative group $\mathcal E_N:=\Z_2^N$. The irreducible representations of
the semidirect product $\SS_N\ltimes \mathcal \Z_2^N$ are well known; recall their
description.

They are parameterized by the pairs $(\rho,\chi)$, where $\chi$ is a character of
$\Z_2^N$ (we need only to pick a representative in each $\SS_N$-orbit in
the dual to $\Z_2^N$) and $\rho$ is an irreducible representation of
$\SS^\chi_N$, the stabilizer of $\chi$ in $\SS_N$. The corresponding
representation $\pi_{\rho,\chi}$ of $\SS_N\ltimes\Z_2^N$ is induced by
the representation $\rho\otimes \chi$ of the subgroup
$\SS^\chi_N\ltimes\Z_2^N$. The $\SS_N$-orbits in the dual to $\Z_2^N$ are parameterized by numbers $n=0,\dots,N$; as a representative of the
$n$th orbit we pick the following character
$$
\chi_m(\epsi_x)=\begin{cases}-1, & x=1,\dots,m\\
1, & x=m+1,\dots,N.\end{cases}
$$
Its stabilizer is the Young subgroup $\SS_m\times \SS_{N-m}\subseteq \SS_N$, and an
irreducible representation of this subgroup is written as the tensor product
$\rho=\rho'\otimes\rho''$, where $\rho'$ and $\rho''$ are irreducible
representations of $\SS_m$ and $\SS_{N-m}$, respectively.

From this picture it is easily seen that the irreducible representations
$\mathscr F_m\circ p$ are precisely those $\pi_{\rho,\chi}$'s for which
$\rho$ is trivial. Let us call them \emph{elementary representations\/}. Now the
problem reduces to the following one: check that a non-elementary irreducible
representation  cannot kill the ideal $I$.

Let $\pi_{\rho,\chi_m}$ be a non-elementary representation. Then at least one
of representations $\rho'$ and $\rho''$ is nontrivial. Assume $\rho'$ is
nontrivial; then $m\ge2$. We claim that then the element $(1-s_{1,2})(1-\epsi_1)(1-\epsi_2)$
(which is one of the generators of the ideal $I$, see \eqref{eq3.A}) acts nontrivially. 

Indeed, it acts nontrivially already in the inducing representation $\rho\otimes\chi_n$,
because in that representation, $\epsi_1$ and $\epsi_2$ acts as the scalar $-1$, so that
$(1-\epsi_1)(1-\epsi_2)$ acts as the scalar 4; on the other hand, the action of $1-s_{1,2}$
is nontrivial since $\rho'$ is nontrivial. The same argument works in the case
when $\rho''$ is nontrivial; then $N-n\ge2$ and we use the element
$(1-s_{m+1,m+2})(1+\epsi_{m+1})(1+\epsi_{m+2})\in I$.

\smallskip

This completes the proof.
\end{proof}

\smallskip

\begin{proof}[Proof of Theorem \ref{thm4.A}]
We suppose now that $(\X,<)$ is a countable, linearly ordered set with finite intervals. The extension to this case is straightforward. Indeed, we realize
$\X$ as the union of an ascending chain of finite intervals $\X_N$ and write
every algebra in question as the closure of  the ascending chain of the
corresponding finite-dimensional subalgebras associated with the intervals
$\X_N$. The key fact is that the map $p$ is ``local'', so that
it is consistent with the embedding of the $N$th subalgebra into the $(N+1)$th
one.

This completes the proof of the theorem.
\end{proof}

\subsection{Representations of algebra $\A^0$ associated with quasi-invariant measures}

From Theorem \ref{thm4.A} we immediately obtain

\begin{corollary}\label{cor4.A}
Let, as above, $(\X,<)$ be a finite or countable linearly ordered set {\rm(}with finite intervals in the infinite case{\rm);} $\Om=\{0,1\}^\X${\rm;} $\SS$ be the group of finitary permutations of $\X${\rm;} $\A^0$ be the gauge invariant subalgebra of the $\CAR$ algebra $\A$.

For any $\SS$-quasi-invariant $\si$-finite measure $M$ on $\Om$, there exists a representation $T[M]$ of the algebra $\A^0$ on the space $L^2(\Om,M)$, uniquely determined by the property
$$
\T[M](a)=T[M](p(a)), \qquad \forall a\in C^*[\SS\wr\Z_2],
$$
where $p:C^*[\SS\wr\Z_2]\to \A^0$ is the homomorphism established in Theorem \ref{thm4.A}.
\end{corollary}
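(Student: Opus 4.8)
The plan is to derive the corollary directly from the two facts already established: that $\T[M]$ annihilates the ideal $I$ (Proposition~\ref{prop3.B}) and that $p$ induces an isomorphism $C^*[\SS\wr\Z_2]/I\to\A^0$ (Theorem~\ref{thm4.A}(ii)). Concretely, the representation $T[M]$ will be manufactured by pushing $\T[M]$ forward along this isomorphism. First I would record that, under the identification $C(\Om)\rtimes\SS\cong C^*[\SS\wr\Z_2]$ of Proposition~\ref{prop3.A}, the representation $\T[M]$ supplied by the construction preceding Proposition~\ref{prop2.B} is a genuine $*$-representation of $C^*[\SS\wr\Z_2]$ on the separable Hilbert space $L^2(\Om,M)$; this is where the standing hypotheses that $M$ be $\SS$-quasi-invariant and $\si$-finite enter, guaranteeing that the Koopman cocycle \eqref{eq2.A}, and hence $\T[M]$, are well defined.

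Next I would invoke Proposition~\ref{prop3.B}, which asserts $I\subseteq\Ker\T[M]$. Since $I$ is a closed two-sided ideal and $\T[M]$ is a $*$-representation, the universal property of the quotient $C^*$-algebra yields a unique $*$-representation $\overline{\T}[M]$ of $C^*[\SS\wr\Z_2]/I$ with $\overline{\T}[M]\circ q=\T[M]$, where $q$ denotes the quotient map. By Theorem~\ref{thm4.A}(ii) the map $p$ factors as $p=\overline p\circ q$ with $\overline p\colon C^*[\SS\wr\Z_2]/I\to\A^0$ an isomorphism. I then define
$$
T[M]:=\overline{\T}[M]\circ\overline p^{\,-1},
$$
a $*$-representation of $\A^0$ on $L^2(\Om,M)$, being a composition of such maps.

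It remains to verify the defining identity and uniqueness. For any $a\in C^*[\SS\wr\Z_2]$ one computes, using $\overline p^{\,-1}\circ p=q$,
$$
T[M](p(a))=\overline{\T}[M]\bigl(\overline p^{\,-1}(p(a))\bigr)=\overline{\T}[M](q(a))=\T[M](a),
$$
which is exactly the required relation. Uniqueness is then immediate from the surjectivity of $p$ (Theorem~\ref{thm4.A}(i)): every element of $\A^0$ has the form $p(a)$, so the displayed relation pins down $T[M]$ on all of $\A^0$.

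I do not anticipate a genuine obstacle here, since all the real work has been absorbed into Propositions~\ref{prop3.A}, \ref{prop3.B} and Theorem~\ref{thm4.A}, and what remains is the standard argument for factoring a $*$-representation through the quotient by an ideal contained in its kernel. The one point worth flagging is that both assertions of Theorem~\ref{thm4.A} are genuinely used: surjectivity of $p$ secures that $T[M]$ is defined on the whole of $\A^0$ and is unique, while the exact equality $\Ker p=I$ (and not merely the inclusion $I\subseteq\Ker p$, which by itself would only give a representation of the quotient) is what makes $\overline p$ an isomorphism and thereby allows $\overline{\T}[M]$ to be transported to $\A^0$.
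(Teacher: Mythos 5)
Your proof is correct and is exactly the argument the paper intends: the paper states Corollary \ref{cor4.A} as an immediate consequence of Theorem \ref{thm4.A} (together with Proposition \ref{prop3.B}), and your write-up simply makes explicit the standard factorization of $\T[M]$ through the quotient $C^*[\SS\wr\Z_2]/I\cong\A^0$, with uniqueness following from the surjectivity of $p$.
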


Here is an alternative characterization of the representation $T[M]$.

\begin{proposition}\label{prop4.C}
In the assumptions of Corollary \ref{cor4.A}, the representation $T=T[M]$ of the algebra $\A^0$ is uniquely determined through the representation $\T=\T[M]$ of the algebra $C^*[\SS\wr\Z_2]\simeq C(\Om)\rtimes\SS$ by the following two conditions: first,
\begin{equation}\label{eq4.H1}
T(a^+_xa^-_x)=\tfrac12(1-\T(\epsi_x)), \qquad x\in\X;
\end{equation}
and, second,
\begin{equation}\label{eq4.H2}
T(a^+_xa^-_y+a^+_ya^-_x)=\T(\wt\epsi_{x,y})\left(\T(s_{x,y})-\T\left(\dfrac{1+\epsi_x\epsi_y}2\right)\right), \quad x, y\in\X, \; x\ne y,
\end{equation}
where we use the following notation:
\begin{equation}\label{eq4.H3}
\wt\epsi_{x,y}:=\frac{1+\epsi_x\epsi_y+(1-\epsi_x\epsi_y)\epsi_{(x,y)}}2,
\end{equation}
\begin{equation}\label{eq4.H4}
\epsi_{(x,y)}:=\prod_{z\in(x,y)}\epsi_z, \qquad (x,y):=\{z: \min(x,y)<z<\max(x,y)\}.
\end{equation}
\end{proposition}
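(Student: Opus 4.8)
The plan is to read off both identities directly from the defining relation $\T(a)=T(p(a))$ of Corollary \ref{cor4.A}, substituting the explicit formulas \eqref{eq4.C}--\eqref{eq4.D} for the homomorphism $p$, and then to obtain the uniqueness assertion from the fact — already contained in Step 5 of the proof of Proposition \ref{prop4.A} — that the elements listed in \eqref{eq4.H1}--\eqref{eq4.H2} generate $\A^0$ as a $C^*$-algebra. Nothing beyond bookkeeping of $p$ and one involutivity fact is needed.

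First I would verify \eqref{eq4.H1}. Applying $\T(a)=T(p(a))$ to $a=\epsi_x$, using $p(\epsi_x)=\eta_x=1-2a^+_xa^-_x$ and that $T$ is a homomorphism, gives $\T(\epsi_x)=1-2\,T(a^+_xa^-_x)$, which is exactly \eqref{eq4.H1}. For \eqref{eq4.H2} I would treat first the case $x<y$. Rearranging \eqref{eq4.D} isolates the off-diagonal term in $\A^0$:
$$
p(s_{x,y})-\frac{1+\eta_x\eta_y}2=\zeta_{x,y}\,(a^+_xa^-_y+a^+_ya^-_x),\qquad \zeta_{x,y}:=\frac{1+\eta_x\eta_y+(1-\eta_x\eta_y)\eta_{(x,y)}}2.
$$
Since $p(\wt\epsi_{x,y})=\zeta_{x,y}$ and $p\big(\tfrac{1+\epsi_x\epsi_y}2\big)=\tfrac{1+\eta_x\eta_y}2$, applying $T$ and using $T\circ p=\T$ converts this into
$$
\T(s_{x,y})-\T\Big(\tfrac{1+\epsi_x\epsi_y}2\Big)=\T(\wt\epsi_{x,y})\,T(a^+_xa^-_y+a^+_ya^-_x).
$$

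The key algebraic input now is that $\zeta_{x,y}^2=1$ (verified in Step 5 of the proof of Proposition \ref{prop4.A}), equivalently $\wt\epsi_{x,y}^2=1$ in $C^*[\SS\wr\Z_2]$; hence $\T(\wt\epsi_{x,y})$ is a self-inverse operator. Multiplying the last display on the left by $\T(\wt\epsi_{x,y})$ then yields \eqref{eq4.H2}. The case $x>y$ needs no separate computation: the quantities $s_{x,y}$, $\tfrac{1+\epsi_x\epsi_y}2$, $\wt\epsi_{x,y}$ (through the symmetric definition \eqref{eq4.H3}--\eqref{eq4.H4}) and $a^+_xa^-_y+a^+_ya^-_x$ are all invariant under interchanging $x$ and $y$, so \eqref{eq4.H2} is symmetric and the already-treated case applies.

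For the uniqueness clause I would again invoke Step 5 of the proof of Proposition \ref{prop4.A}: multiplying $a^+_xa^-_y+a^+_ya^-_x$ on the left by $a^+_xa^-_x$ recovers $a^+_xa^-_y$, and the elements $a^+_xa^-_y$ generate $\A^0$ by definition. Thus the diagonal elements $\{a^+_xa^-_x\}$ together with the symmetrized off-diagonal elements $\{a^+_xa^-_y+a^+_ya^-_x\}$ generate $\A^0$ as a $C^*$-algebra, so any $\ast$-representation is determined, by continuity, by its values on them — which is precisely what \eqref{eq4.H1} and \eqref{eq4.H2} prescribe in terms of $\T$. I expect no genuine obstacle here: the only non-formal ingredient is the involutivity $\wt\epsi_{x,y}^2=1$, which is exactly what makes $\T(\wt\epsi_{x,y})$ invertible and lets \eqref{eq4.H2} be solved for the off-diagonal generator; the rest is tracking which element maps to which under $p$.
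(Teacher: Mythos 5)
Your proof is correct and takes essentially the same route as the paper, whose entire proof reads: ``This is simply a reformulation of Corollary \ref{cor4.A} using the fact that $\wt\epsi^2_{x,y}=1$.'' You have merely made explicit the bookkeeping the paper leaves implicit --- the identification $p(\wt\epsi_{x,y})=\zeta_{x,y}$, the left-multiplication by the involution $\T(\wt\epsi_{x,y})$ to solve for the off-diagonal generator, and the appeal to Step 5 of Proposition \ref{prop4.A} for the fact that the elements $a^+_xa^-_x$ and $a^+_xa^-_y+a^+_ya^-_x$ generate $\A^0$, which gives uniqueness.
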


\begin{proof}
This is simply a reformulation of Corollary \ref{cor4.A} using the fact that $\wt\epsi^2_{x,y}=1$.
\end{proof}

\begin{remark}\label{rem4.C}
The second condition may be reformulated as follows. Given two distinct points $x,y\in\X$, consider  the partition
$\Om=\Om^+_{x,y}\sqcup \Om^-_{x,y}$. where
$$
\Om^+_{x,y}:=\{\om\in\Om: \om(x)=\om(y)\}, \quad \Om^-_{x,y}:=\{\om\in\Om: \om(x)\ne\om(y)\}.
$$
Then we have the direct sum decomposition
$$
L^2(\Om,M)=L^2(\Om^+_{x,y},M)\oplus L^2(\Om^-_{x,y}, M).
$$
The idea is that the operator on the right-hand of \eqref{eq4.H2} takes a much simpler form when written separately on the subspaces  $L^2(\Om^\pm_{x,y},M)$. Indeed, $\T(s_{x,y})$ preserves $L^2(\Om^-_{x,y}, M)$ and acts identically on $L^2(\Om^+_{x,y}, M)$, while $\T(\epsi_x\epsi_y)$ acts on $L^2(\Om^\pm_{x,y},M)$ as $\pm1$. It follows that \eqref{eq4.H2} is equivalent to the combination of two relations:
\begin{equation}\label{eq4.I1}
T(a^+_xa^-_y+a^+_ya^-_x)\big|_{L^2(\Om^+_{x,y},M)}=0,
\end{equation}
\begin{equation}\label{eq4.I2}
T(a^+_xa^-_y+a^+_ya^-_x)\big|_{L^2(\Om^-_{x,y},M)}=\prod_{z\in(x,y)}\T(\epsi_z)\cdot\T(s_{x,y}).
\end{equation}

\end{remark}

\begin{remark}\label{rem4.A}
Evidently, claims (i) and (ii) of Proposition \ref{prop2.B} hold true for  representations $T[M]$ of the algebra $\A^0$.
\end{remark}

\begin{remark}\label{rem4.B}
Our construction of representations $T[M]$ of the algebra $\A^0$, associated to $\SS$-quasi-invariant measures $M$, is similar to the constructions described in the book Str\u{a}til\u{a}--Voiculescu \cite[Chapter IV]{SV-1975} (see also their paper \cite{SV-1978}) and in the expository paper Vershik--Kerov \cite[Section 6]{VK}. But there are technical differences: Str\u{a}til\u{a} and Voiculescu use a different symmetry group, while Vershik and Kerov make accent on the fact that instead of group actions one may  deal with equivalence relations. However, for our purposes, a simple reference to these works would not be enough, and we had to present the necessary material in our own way.
\end{remark}

\subsection{The case of counting measure}

In this subsection we consider an arbitrary countable $\SS$-invariant subset $\Theta\subset\Om$ (for instance, the orbit of an arbitrary $\om\in\Om$) and take as $M$  the counting  measure on $\Theta$. According to Corollary \ref{cor4.A}, such a measure gives rise to a representation $T[M]$ of the algebra $\A^0$, which acts on the Hilbert space  $\ell^2(\Theta)$. 
In Proposition \ref{prop4.B} we describe $T[M]$ explicitly, using the natural orthonormal basis $\{\de_\om:\om\in\Theta\}$ formed by the delta functions. This result is then used in Section \ref{sect5}.

To formulate the proposition, we need to introduce some notation. 

For any two distinct points $x,y\in\X$ we set
$$
\sgn(x,y):=\begin{cases}1, & x>y, \\ -1, & x<y.\end{cases}
$$

Next, let $(x_1,\dots,x_n)$ and $(y_1,\dots,y_n)$ be two ordered $n$-tuples of points of $\X$ such that $x_i\ne x_j$ and $y_i\ne y_j$ for $i\ne j$. Given $\om\in\Om$, we set
\begin{multline}\label{eq4.A}
\sgn(x_1,\dots,x_n;y_1,\dots,y_n; \om)\\
:=\begin{cases} \prod\limits_{u\in\om\setminus\{y_1,\dots,y_n\}}\;\prod\limits_{i=1}^n\sgn(x_i,u)\sgn(y_i,u)
\cdot \prod\limits_{1\le i<j\le n}\sgn(x_i,x_j)\sgn(y_i,y_j),\\
\text{if $\om\supseteq\{y_1,\dots,y_n\}$ and $(\om\setminus\{y_1,\dots,y_n\})\cap\{x_1,\dots,x_n\}=\varnothing$};\\
0, \; \text{otherwise}.
\end{cases}
\end{multline}
Note that $\om$ may be infinite, but the product over $u$ is in fact always finite, because for any fixed $x$ and $y$, the product $\sgn(x,u)\sgn(y,u)$ takes the value $-1$ for finitely many $u$ only.

\begin{proposition}\label{prop4.B}
In this notation, we have for any $\om\in\Theta$
\begin{equation}\label{eq4.G}
T(a^+_{x_n}\dots a^+_{x_1}a^-_{y_1}\dots a^-_{y_n})\de_\om=\sgn(x_1,\dots,x_n;y_1,\dots,y_n; \om) \de_{(\om\setminus\{y_1,\dots,y_n\})\cup\{x_1,\dots,x_n\}}.
\end{equation}
\end{proposition}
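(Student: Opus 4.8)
The plan is to prove \eqref{eq4.G} by induction on $n$, taking the action of $T$ on the generators of $\A^0$ as the base case. Since $M$ is the counting measure on $\Theta$, the Radon--Nikod\'ym cocycle \eqref{eq2.A} is identically $1$, so the Koopman formula \eqref{eq2.B} reduces to $\T(\epsi_x)\de_\om=(1-2\om(x))\de_\om$ and $\T(s_{x,y})\de_\om=\de_{s_{x,y}(\om)}$ for $\om\in\Theta$. Feeding this into \eqref{eq4.H1} shows that $T(a^+_xa^-_x)$ is the projection sending $\de_\om$ to $\de_\om$ when $x\in\om$ and to $0$ otherwise, which is exactly \eqref{eq4.G} in the diagonal case $n=1$, $x_1=y_1=x$ (the sign in \eqref{eq4.A} being an empty/square product equal to $1$).

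For the off-diagonal case $n=1$, $x_1=x\ne y=y_1$, I would first read off $T(a^+_xa^-_y+a^+_ya^-_x)$ from \eqref{eq4.I1}--\eqref{eq4.I2}: it kills $\de_\om$ when $\om(x)=\om(y)$, and when $\om(x)\ne\om(y)$ it sends $\de_\om$ to $(-1)^{\ell}\de_{s_{x,y}(\om)}$, where $\ell=|\{z\in\om:\min(x,y)<z<\max(x,y)\}|$ is produced by the scalar $\prod_{z\in(x,y)}(1-2\om(z))$ in \eqref{eq4.I2}. Using the $\CAR$ identity $a^+_xa^-_x(a^+_xa^-_y+a^+_ya^-_x)=a^+_xa^-_y$ and the known action of $T(a^+_xa^-_x)$, one gets that $T(a^+_xa^-_y)$ sends $\de_\om$ to $(-1)^\ell\de_{s_{x,y}(\om)}$ precisely when $y\in\om$ and $x\notin\om$, and to $0$ otherwise. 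The remaining check is the sign identity $\prod_{u\in\om\setminus\{y\}}\sgn(x,u)\sgn(y,u)=(-1)^\ell$, valid because $\sgn(x,u)\sgn(y,u)=-1$ exactly when $u$ lies strictly between $x$ and $y$; this matches \eqref{eq4.A} and settles the base case.

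For the inductive step I would peel off one creation--annihilation pair. Both the monomial $a^+_{x_n}\dots a^+_{x_1}a^-_{y_1}\dots a^-_{y_n}$ and the right-hand side of \eqref{eq4.A} are antisymmetric under permuting the $x_i$ among themselves and the $y_j$ among themselves, so I may reorder both tuples at the cost of equal sign changes on the two sides. Using this I would arrange that $y_n\notin\{x_1,\dots,x_{n-1}\}$ (always possible: if $\{y_j\}\not\subseteq\{x_i\}$ choose $y_n$ outside $\{x_i\}$, otherwise peel a diagonal pair with $x_n=y_n$). Under this condition the anticommutation relations give the correction-free factorization
$$
a^+_{x_n}\dots a^+_{x_1}a^-_{y_1}\dots a^-_{y_n}=(a^+_{x_n}a^-_{y_n})\,a^+_{x_{n-1}}\dots a^+_{x_1}a^-_{y_1}\dots a^-_{y_{n-1}}.
$$
Applying $T$, then the induction hypothesis to the length-$(n-1)$ factor and finally the base case to $T(a^+_{x_n}a^-_{y_n})$, produces the target configuration $(\om\setminus\{y_1,\dots,y_n\})\cup\{x_1,\dots,x_n\}$ and a product of two signs.

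The main obstacle is to show this product of signs equals $\sgn(x_1,\dots,x_n;y_1,\dots,y_n;\om)$, since the base-case factor is computed at the intermediate configuration $\om_1=(\om\setminus\{y_1,\dots,y_{n-1}\})\cup\{x_1,\dots,x_{n-1}\}$ rather than at $\om$. I would split the product $\sgn(x_n;y_n;\om_1)$ over $u\in\om_1\setminus\{y_n\}$ into a part over $u\in\om\setminus\{y_1,\dots,y_n\}$ and a part over the inserted points $x_1,\dots,x_{n-1}$, and likewise isolate the $u=y_n$ factor inside the hypothesis sign. The $u$-products then reassemble into $\prod_{u}\prod_{i=1}^n\sgn(x_i,u)\sgn(y_i,u)$, while the leftover factors reduce, index by index, to the identity $\sgn(x_i,y_n)\sgn(y_n,x_i)\sgn(x_n,x_i)=\sgn(x_i,x_n)$ (a consequence of $\sgn(a,b)\sgn(b,a)=-1$), which supplies exactly the missing pair factors $\prod_{i<n}\sgn(x_i,x_n)\sgn(y_i,y_n)$ needed to upgrade the degree-$(n-1)$ product $\prod_{i<j\le n-1}$ to $\prod_{1\le i<j\le n}$. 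A routine separate check that the vanishing conditions on both sides coincide then completes the induction.
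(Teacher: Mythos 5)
Your proof is correct, and it takes a genuinely different route from the paper's. The paper first reduces to finite $\X$ (and to $\Theta=\Om$, $(\X,<)=([N],<)$) and then transports the whole computation to the exterior algebra $\bigwedge\C^N$ via the representation $\mathscr F$ from the proof of Proposition \ref{prop4.A}: after sorting both tuples in decreasing order (the same skew-symmetry observation you make), the monomial is written as $(a^+_{x_1}a^-_{y_1})\dots(a^+_{x_n}a^-_{y_n})$, whose action on a basis polyvector is ``replace each $e_{y_j}$ by $e_{x_j}$,'' and the whole sign appears at once as $(-1)^m$ from an inversion count when reordering the wedge. You instead stay on $\ell^2(\Theta)$, with $\X$ possibly infinite and $\Theta$ arbitrary, and argue from the intrinsic description of $T[M]$ --- formula \eqref{eq4.H1} and the relations \eqref{eq4.I1}--\eqref{eq4.I2} of Remark \ref{rem4.C}, specialized to the counting measure where the cocycle \eqref{eq2.A} is identically $1$ --- then induct on $n$, peeling off one generator at a time. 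I checked the delicate points of your argument: the CAR identity $a^+_xa^-_x(a^+_xa^-_y+a^+_ya^-_x)=a^+_xa^-_y$ used to extract the single generator in the base case; the correction-free factorization, which indeed holds exactly because you arrange $y_n\notin\{x_1,\dots,x_{n-1}\}$ (possible both when some $y_j$ lies outside $\{x_i\}$ and when the two sets coincide, via a diagonal pair); and the sign bookkeeping, where splitting $\sgn(x_n;y_n;\om_1)$ over $\om\setminus\{y_1,\dots,y_n\}$ versus the inserted points, together with the $u=y_n$ factor of the inductive sign and the identity $\sgn(x_i,y_n)\sgn(y_n,x_i)\sgn(x_n,x_i)=\sgn(x_i,x_n)$, produces exactly the missing factors $\prod_{i<n}\sgn(x_i,x_n)\sgn(y_i,y_n)$; the deferred vanishing-condition check is indeed routine. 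As for what each approach buys: the paper's proof is shorter because the sign is absorbed into a single inversion count in the Fock model, but it leans on the reduction to finite $\X$ and the apparatus of Proposition \ref{prop4.A}; yours is self-contained at the level of the crossed-product/Koopman description, requires no reduction to finite $\X$ or to $\Theta=\Om$, and makes transparent how the sign \eqref{eq4.A} is assembled generator by generator.
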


\begin{proof}
As in the proof of Theorem \ref{thm4.A}, the case of infinite $\X$ is reduced to the case of finite $\X$. Then one may assume, without loss of generality, that $\Theta$ is a single $\SS$-orbit. Then we observe that $\ell^2(\Theta)$ is a subspace of $\ell^2 (\Om)$ (both spaces are finite dimensional!) and finally we may assume that $\Theta=\Om$. 

Thus, we may assume that $(\X,<)=([N],<)$ for some $N$.  This allows us to pass to the representation $\mathscr F$ on the space $\bigwedge\C^N$, using the isomorphism $\ell^2(\Om)\to \bigwedge\C^N$, as in the proof of Proposition \ref{prop4.A}.

Examine the two cases, as in \eqref{eq4.A}. In the second case, both sides of \eqref{eq4.G} vanish: for the right-hand side this holds by the very definition, and for the left-hand this is seen from the definition of $\mathscr F$. Thus, it remains to examine the first case. Then it is clear that we only have to check that the sign on the right is correct.

Observe that both sides of \eqref{eq4.G} are skew-symmetric with respect to separate permutations of $x_i$'s and $y_i$'s. Indeed, for the left-hand side this follows from the defining relations, and for the right-hand side this is seen from \eqref{eq4.A}. This makes it possible to assume that $x_1>\dots>x_n$ and $y_1>\dots>y_n$. 

Finally, we use the equality 
$$
a^+_{x_n}\dots a^+_{x_1}a^-_{y_1}\dots a^-_{y_n}=(a^+_{x_1}a^-_{y_1})\dots(a^+_{x_n}a^-_{y_n})
$$
and write the basis polyvector $e_\om\in\bigwedge\C^N$ corresponding to $\de_\om\in\ell^2(\Om)$ as $e_{i_1}\wedge\dots\wedge e_{i_k}$. We know that all the $y_j$'s are contained among the indices $i_1,\dots,i_k$. Then from the definition of $\mathscr F$ it is seen that
$$
\mathscr F((a^+_{x_1}a^-_{y_1})\dots(a^+_{x_n}\,a^-_{y_n}))e_{i_1}\wedge\dots\wedge e_{i_k}
$$
is the polyvector obtained from $e_{i_1}\wedge\dots\wedge e_{i_k}$ by replacing each $e_{y_j}$ with $e_{x_j}$. Reordering the resulting polyvector will produce the sign $(-1)^m$, where $m$ is equal to the number of ``inversions'', that is, $m$ is the number of pairs of indices $(u, r)$ such that $u\in\om\setminus\{y_1,\dots,y_n\}$, $r=1,\dots,n$, and the differences $u-y_r$ and $u-x_r$ have opposite sign. From this it follows that $(-1)^m$ is precisely the desired sign.
\end{proof}

\subsection{States $\tau[M]$ and perfect measures}

Under the additional assumption that $M$ is a probability measure we can associate with $M$ a state on $\A^0$. 
\begin{definition}\label{def4.A}
Let $M$ be an $\SS$-quasi-invariant probability measure on $\Om$ and  $\one$ stand for the constant function on $\Om$ equal to $1$, which we regard as a vector of the Hilbert space $L^2(\Om,M)$. We denote by $\tau[M]$ the corresponding state on $\A^0$:
\begin{equation}\label{eq4.H}
\tau[M](a):=(T[M](a)\one,\one), \qquad a\in\A^0.
\end{equation}
\end{definition}

By the very definition of $T[M]$, the restriction of $\tau[M]$ to the subalgebra $C(\Om)\subset\A^0$ is the state $\EE_M$ corresponding to $M$. Note that $\one$ is a cyclic vector of $T[M]$, because it is already a cyclic vector with respect to the action of the subalgebra $C(\Om)\subset\A^0$. Therefore, the  whole information about the representation $T[M]$ is encoded in $\tau[M]$.

\begin{definition}\label{def4.B}
Let $M$ be an $\SS$-quasi-invariant probability measure on $\Om$. We say that $M$ is a \emph{perfect measure} if the state $\tau[M]$ is a quasifree state $\varphi[K]$ for some positive contraction $K$ on $\ell^2(\X)$ (see Definition \ref{def1.B}). Further, the kernel $K(x,y):=(K e_y,e_x)$  will be called the  \emph{canonical correlation kernel} of $M$. 
\end{definition}

It is not evident a priori that perfect measures exist, but concrete examples will be given in Sections \ref{sect5} and \ref{sect8}. 

\begin{remark}\label{rem4.D}
The product measures are not perfect measures. Here is a sketch of proof. A product measure $M$ on $\Om=\{0,1\}^\X$ is determined by a collection of numbers $\{p_x:x\in\X\}$ such that $0<p_x<1$; it is a determinantal measure with the diagonal correlation kernel $K_{\operatorname{diag}}(x,y):=\de_{xy}p_x$. Obviously, $M$  is $\SS$-quasi-invariant, so that the state $\tau[M]$ is well defined.  A direct computation shows that 
$$
\tau[M](a^+_x a^-_y)=(p_x(1-p_x)p_y(1-p_y))^{1/2}, \quad x\ne y.
$$
Therefore, the only choice for the canonical kernel would be
$$
K(x,x)=p_x, \qquad K(x,y)=(p_x(1-p_x)p_y(1-p_y))^{1/2}, \quad x\ne y.
$$ 
But $\tau[M]\ne\varphi[K]$, because, for $x_1\ne x_2$ one has
$$
\tau[M](a^+_{x_2}a^+_{x_1}a^-_{x_1}a^-_{x_2})=\tau[M]((a^+_{x_1}a^-_{x_1})(a^+_{x_2}a^-_{x_2}))=p_{x_1}p_{x_2},
$$
which is distinct from 
$$
\det\begin{bmatrix}K(x_1,x_1) & K(x_1,x_2)\\ K(x_2,x_1) & K(x_2,x_2)\end{bmatrix}=p_{x_1}p_{x_2}-p_{x_1}(1-p_{x_1})p_{x_2}(1-p_{x_2}).
$$
\end{remark}

\begin{remark}\label{rem4.E}
From the definition of the representation $T[M]$ and the state $\tau[M]$ it is seen that $\tau[M]$ takes real values on all monomials composed from the generators $a^+_xa^-_y$ of the algebra $\A^0$. It follows, in particular, that canonical correlation kernels must be real valued. 
\end{remark}

\subsection{Particle/hole involution}\label{sect4.5}
For $\om\in\Om$, set $\om^\circ:=\Om\setminus\om$. Clearly, $(\om^\circ)^\circ=\om$. The map $\om\mapsto\om^\circ$ is a homeomorphism of $\Om$ called the \emph{particle/hole involution}. It induces an involutive transformation $M\mapsto M^\circ$ of measures on the space $\PP(\Om)$. The particle/hole involution commutes with the action of $\SS$ and hence preserves the set of $\SS$-quasi-invariant measures.  In the next proposition we describe the link between $T[M]$ and $T[M^\circ]$. 

To state it we need a new notation. Recall that the ordered set $(\X,<)$ is isomorphic to one of the sets, $\{1,\dots,N\}$, $\Z_{\ge0}$, $\Z_{\le0}$, $\Z$, and denote by $\nu$ the corresponding bijection (in the last case, $\nu$ is defined up to a shift, but this does not matter). The correspondence 
$$
a^\pm_x\mapsto (-1)^{\nu(x)}a^\mp_x, \qquad x\in\X,
$$
extends to an involutive automorphism of the algebra $\A$, which preserves the subalgebra $\A^0$. On $\A^0$, its action is uniquely determined by the correspondence 
\begin{equation}\label{eq4.K}
a^+_xa^-_y\mapsto (-1)^{\nu(x)-\nu(y)}a^-_xa^+_y, \quad x,y\in\X.
\end{equation} 

Next, given a measure $M$ on $\Om$, let $U: L^2(\Om,M)\to L^2(\Om,M^\circ)$ denote the natural isometry of these two Hilbert spaces, induced by the homeomorphism $\om\mapsto\om^\circ$. 

\begin{proposition}\label{prop4.D}
Let  $M$ be an $\SS$-quasi-invariant measure on $\X$. Then
\begin{equation}\label{eq4.J}
U^{-1}\circ T[M^\circ](a)\circ U=T[M](a^\circ), \quad a\in\A^0,
\end{equation}
where $a\mapsto a^\circ$ is the involutive automorphism of\/ $\A^0$ defined by \eqref{eq4.K}. 
\end{proposition}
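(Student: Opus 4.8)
The plan is to verify the intertwining relation \eqref{eq4.J} directly on the generators $a^+_xa^-_y$ of $\A^0$, since both sides are algebra homomorphisms (or anti-homomorphisms composed appropriately) and the relation is linear; once it holds on generators and on products thereof, it holds on all of $\A^0$. Because the explicit action of $T[M]$ on generators has already been recorded in Proposition \ref{prop4.B} via the sign function \eqref{eq4.A}, the cleanest route is to reduce everything to the case of the counting measure on an $\SS$-orbit $\Theta$, as was done in the proof of Proposition \ref{prop4.B}. Indeed, by part (i) of Proposition \ref{prop2.B} (and Remark \ref{rem4.A}) the equivalence class of $T[M]$ depends only on the equivalence class of $M$, and the identity \eqref{eq4.J} is compatible with replacing $M$ by an equivalent measure; so it suffices to check it for counting measures, for which formula \eqref{eq4.G} gives a completely explicit combinatorial description.

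First I would unwind the definitions of $U$ and of the involution $a\mapsto a^\circ$. The isometry $U$ sends $\de_\om\in L^2(\Om,M)$ to $\de_{\om^\circ}\in L^2(\Om,M^\circ)$, so that $U^{-1}\circ T[M^\circ](a)\circ U$ acts on $\de_\om$ by first complementing $\om$, applying the operator \eqref{eq4.G} for the measure $M^\circ$, and complementing back. The particle/hole exchange interchanges the roles of occupied and empty sites, which is exactly what turns a creation operator into an annihilation operator and vice versa: under complementation, the condition ``$\om\supseteq\{y_1,\dots,y_n\}$'' for the $a^-_{y}$'s becomes a condition on the empty sites of $\om^\circ$, matching the $a^+$ side. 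Thus I would compute both sides of \eqref{eq4.J} applied to a single basis vector $\de_\om$ and to a single generator $a^+_xa^-_y$ (and then to a general monomial $a^+_{x_n}\cdots a^+_{x_1}a^-_{y_1}\cdots a^-_{y_n}$), comparing the resulting basis vectors and their scalar coefficients.

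The two sides will manifestly produce the same basis vector $\de_{(\om\setminus\{y\})\cup\{x\}}$ up to complementation, so the entire content of the proposition is the matching of signs. This is where the explicit sign factor \eqref{eq4.A} and the twist $(-1)^{\nu(x)-\nu(y)}$ in \eqref{eq4.K} enter. The main obstacle is therefore the bookkeeping of signs: under $\om\mapsto\om^\circ$ the index sets $\om\setminus\{y_1,\dots,y_n\}$ over which the products in \eqref{eq4.A} run get replaced by their complements, so each factor $\sgn(x_i,u)\sgn(y_i,u)$ is taken over a complementary range of $u$, and one must show that the product over the complement differs from the product over $\om$ by precisely $\prod_i(-1)^{\nu(x_i)-\nu(y_i)}$. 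I would establish this by the elementary observation, already noted after \eqref{eq4.A}, that for fixed $x,y$ the factor $\sgn(x,u)\sgn(y,u)$ equals $-1$ for exactly $|\nu(x)-\nu(y)|$ values of $u$ (those strictly between $x$ and $y$), independently of which side of the partition they lie on; hence splitting this finite set of ``inverting'' positions according to membership in $\om$ versus $\om^\circ$ yields the required compensating power of $-1$, while the permutation-type factors $\prod_{i<j}\sgn(x_i,x_j)\sgn(y_i,y_j)$ are unaffected. Assembling these parity counts over $i=1,\dots,n$ gives the overall factor $(-1)^{\sum_i(\nu(x_i)-\nu(y_i))}$, which is exactly the scalar carried by $a\mapsto a^\circ$ on the monomial, completing the verification on generators and hence on all of $\A^0$.
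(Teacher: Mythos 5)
Your reduction step is the problem. You invoke Proposition \ref{prop2.B}(i) (with Remark \ref{rem4.A}) to say that the equivalence class of $T[M]$ depends only on the equivalence class of $M$, and that \eqref{eq4.J} is stable under replacing $M$ by an equivalent measure --- both true --- but from this you conclude that it suffices to verify \eqref{eq4.J} for counting measures. That conclusion would require every $\SS$-quasi-invariant measure to be \emph{equivalent} to a counting measure on a countable invariant set, which is false: a counting measure is purely atomic, while the measures for which Proposition \ref{prop4.D} is actually used later (the measures $M^\pm_r$ and $M^\dSine_\phi$ of Theorems \ref{thm8.B} and \ref{thm8.C}, whose kernels are projections of infinite rank) are non-atomic, hence disjoint from every counting measure. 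So, as written, your argument establishes \eqref{eq4.J} only for atomic $M$. The missing idea is to convert \eqref{eq4.J} into an $M$-\emph{independent} algebraic statement before computing: at the crossed-product level the identity $U^{-1}\circ \T[M^\circ](b)\circ U=\T[M](b^\circ)$, where $b\mapsto b^\circ$ is the automorphism of $C^*[\SS\wr\Z_2]$ fixing $\SS$ and sending each $\epsi_x$ to $-\epsi_x$, is essentially tautological from the Koopman construction, for \emph{every} quasi-invariant $M$; since $b\mapsto b^\circ$ preserves the ideal $I$ (visible from \eqref{eq3.A}), the whole proposition then reduces to the purely algebraic claim that the automorphism of $\A^0=C^*[\SS\wr\Z_2]/I$ induced by $b\mapsto b^\circ$ coincides with \eqref{eq4.K}. \emph{That} claim may legitimately be checked in the counting-measure (Fock) picture, because it does not mention $M$ at all; this is where your sign computation belongs. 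The paper follows exactly this route, verifying the algebraic claim via the characterization of $T[M]$ in Proposition \ref{prop4.C} (relations \eqref{eq4.I1}, \eqref{eq4.I2}) rather than via \eqref{eq4.G}.

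There is also a flaw in the sign bookkeeping itself. The factor $\sgn(x,u)\sgn(y,u)$ equals $-1$ precisely for $u$ strictly between $x$ and $y$, and the number of such points is $|\nu(x)-\nu(y)|-1$, not $|\nu(x)-\nu(y)|$; so the product taken over the complementary range of $u$ differs by $-(-1)^{\nu(x)-\nu(y)}$ per pair, not by $(-1)^{\nu(x)-\nu(y)}$. This extra minus sign per pair is compensated by a second sign you also omit: applying \eqref{eq4.K} to a monomial produces anti-normally ordered factors $a^-_{x_i}a^+_{y_i}$, and comparing with \eqref{eq4.G} (which applies to normally ordered monomials) requires the reordering $a^-_xa^+_y=-a^+_ya^-_x$ for $x\ne y$, costing one more minus per pair. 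The paper encodes the correct balance in the identity $(-1)^{|(x,y)|}=-(-1)^{\nu(x)-\nu(y)}$ at the end of its proof. Your two oversights happen to cancel, so the final matching you assert is true, but the intermediate claims as stated are incorrect and would not survive a careful write-up.
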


\begin{proof}
The particle/hole involution gives rise to an automorphism of the algebra $C(\Om)\rtimes\SS$, which is identical on $\SS$ and reduces on $C(\Om)$ to the transformation $f\mapsto f^\circ$, where $f^\circ(\om):=f(\om^\circ)$. Under the identification $C(\Om)\rtimes\SS\to C^*[\SS\wr\Z_2]$, we get an automorphism of the latter algebra, which is still identical on $\SS$ and sends $\epsi_x$ to $-\epsi_x$ for each $x\in\X$. Let us denote that automorphism as $b\mapsto b^\circ$. In this notation, we have 
$$
U^{-1}\circ \T[M^\circ](b)\circ U=\T[M](b^\circ), \quad b\in C^*[\SS\wr\Z_2].
$$

On the other hand, it is seen from \eqref{eq3.A} that the automorphism $b\mapsto b^\circ$ preserves the ideal $I$, the kernel of the projection $p: C^*[\SS\wr\Z_2]\to\A^0$ and hence gives rise to an automorphism of $\A^0$. It suffices to check that the latter automorphism coincides with the automorphism $a\mapsto a^\circ$ defined by \eqref{eq4.K}. This is clear for $a=a^+_xa^-_x$ by virtue of \eqref{eq4.H1}. It remains to examine the case of $a=a^+_xa^-_y+a^+_ya^-_x$, where $x\ne y$. To do that, it is more convenient to use, instead of \eqref{eq4.H2}, the relations \eqref{eq4.I1} and \eqref{eq4.I2}. Then our task reduces to checking the following two relations for $a:=a^+_xa^-_y+a^+_ya^-_x$ (below we abbreviate $T=T[M]$ and $\T=\T[M]$, as in \eqref{eq4.I1} and \eqref{eq4.I2}):
\begin{equation}\label{eq4.L1}
T(a^\circ)\big|_{L^2(\Om^+_{x,y},\, M)}=0
\end{equation}
and
\begin{equation}\label{eq4.L2}
T(a^\circ)\big|_{L^2(\Om^-_{x,y},\, M)}=\prod_{z\in(x,y)}\T((\epsi_z)^\circ)\cdot\T((s_{x,y})^\circ).
\end{equation}
Observe now that $a^\circ=-(-1)^{\nu(x)-\nu(y)}a$ and recall that $(\epsi_z)^\circ=-\epsi_z$ and $(s_{x,y})^\circ=s_{x,y}$. Now it is clear that \eqref{eq4.L1} is equivalent to \eqref{eq4.I1}. Next, \eqref{eq4.L2} is equivalent to \eqref{eq4.I2}, because $(-1)^{|(x,y)|}=-(-1)^{\nu(x)-\nu(y)}$. 
\end{proof}

\begin{proposition}\label{prop4.E}
Let  $M$ be  a perfect measure and $K(x,y)$ be its canonical correlation kernel. Then the measure $M^\circ$ obtained from $M$ by the particle/hole involution  is also perfect and its canonical correlation kernel is 
$$
K^\circ(x,y):=\de_{xy}-(-1)^{\nu(x)-\nu(y)}K(x,y), \qquad x,y\in\X.
$$
\end{proposition}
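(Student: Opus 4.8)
The plan is to move the whole question from measures to states, use the intertwining already established in Proposition~\ref{prop4.D}, and reduce everything to a single algebraic identity: composing a quasifree state with the particle/hole automorphism \eqref{eq4.K} again yields a quasifree state, now with the complementary kernel. First I would pass to the level of the states $\tau$. The isometry $U\colon L^2(\Om,M)\to L^2(\Om,M^\circ)$ is induced by the homeomorphism $\om\mapsto\om^\circ$, so it fixes the constant function, $U\one=\one$. Rewriting \eqref{eq4.J} as $T[M^\circ](a)\circ U=U\circ T[M](a^\circ)$, applying both sides to $\one$, and taking the inner product with $\one=U\one$, I obtain, since $U$ is an isometry,
\[
\tau[M^\circ](a)=(T[M^\circ](a)\one,\one)_{M^\circ}=(U\,T[M](a^\circ)\one,U\one)_{M^\circ}=(T[M](a^\circ)\one,\one)_{M}=\tau[M](a^\circ)
\]
for every $a\in\A^0$. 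Thus $\tau[M^\circ]=\tau[M]\circ(\,\cdot\,)^\circ$, where $(\,\cdot\,)^\circ$ is the automorphism \eqref{eq4.K}.

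Since $M$ is perfect, $\tau[M]=\varphi[K]$, hence $\tau[M^\circ](a)=\varphi[K](a^\circ)$, and the proposition will follow once I prove the identity of states $\varphi[K]\circ(\,\cdot\,)^\circ=\varphi[K^\circ]$ on $\A^0$ and check that $K^\circ$ is a positive contraction. The latter is immediate: writing $J$ for the diagonal unitary with $Je_x=(-1)^{\nu(x)}e_x$, the matrix $K^\circ$ represents the operator $1-JKJ$; as $J=J^*=J^{-1}$, the operator $JKJ$ is unitarily equivalent to $K$, hence a positive contraction, and so is $1-JKJ$. Therefore $\varphi[K^\circ]$ is a genuine quasifree state, and once the identity is proved $M^\circ$ is perfect with canonical kernel $K^\circ$ by Definition~\ref{def4.B}.

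It then remains to verify $\varphi[K](a^\circ)=\varphi[K^\circ](a)$ on the spanning monomials $a=a^+_{x_n}\cdots a^+_{x_1}a^-_{y_1}\cdots a^-_{y_n}$. Applying \eqref{eq4.K} term by term gives
\[
a^\circ=(-1)^{\sum_i\nu(x_i)+\sum_j\nu(y_j)}\,a^-_{x_n}\cdots a^-_{x_1}\,a^+_{y_1}\cdots a^+_{y_n},
\]
so the heart of the matter is to evaluate $\varphi[K]$ on a \emph{reversed} monomial, with all annihilators to the left of all creators. Here I would introduce the hole operators $b^+_x:=a^-_x$, $b^-_x:=a^+_x$, which again satisfy the CAR. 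By the particle/hole symmetry of quasifree states, $\varphi[K]$ remains gauge-invariant and quasifree when written through the $b$'s, with two-point function $\varphi[K](b^+_xb^-_y)=\varphi[K](a^-_xa^+_y)=\de_{xy}-K(x,y)=(1-K)(x,y)$, where I use the CAR and, for the clean kernel form, the symmetry $K(x,y)=K(y,x)$ coming from selfadjointness together with the reality of $K$ (Remark~\ref{rem4.E}). Reading Definition~\ref{def1.B} for the $b$'s then yields $\varphi[K](a^-_{x_n}\cdots a^-_{x_1}a^+_{y_1}\cdots a^+_{y_n})=\det[(1-K)(y_i,x_j)]_{i,j=1}^n$. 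Finally, absorbing the prefactor by multiplying the $i$-th row by $(-1)^{\nu(y_i)}$ and the $j$-th column by $(-1)^{\nu(x_j)}$ reproduces exactly the sign $(-1)^{\sum_i\nu(x_i)+\sum_j\nu(y_j)}$ and turns the $(i,j)$-entry into $(-1)^{\nu(y_i)-\nu(x_j)}(1-K)(y_i,x_j)=\de_{y_ix_j}-(-1)^{\nu(y_i)-\nu(x_j)}K(y_i,x_j)=K^\circ(y_i,x_j)$, the middle equality using $\nu(y_i)=\nu(x_j)$ whenever $y_i=x_j$. Hence $\varphi[K](a^\circ)=\det[K^\circ(y_i,x_j)]_{i,j=1}^n=\varphi[K^\circ](a)$.

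The only genuinely delicate point is the reversed-correlator formula in the third step, i.e.\ the claim that $\varphi[K]$ is still a gauge-invariant quasifree state for the hole operators with two-point function $1-K$; the remainder is the formal intertwining of Proposition~\ref{prop4.D} together with sign bookkeeping. If one prefers not to invoke particle/hole symmetry as a black box, the reversed correlator can instead be obtained directly by commuting all creators to the left through the CAR and matching the emerging determinant, at the cost of a longer but entirely routine computation.
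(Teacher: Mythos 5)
Your proposal is correct and follows the same skeleton as the paper's own proof: both reduce the statement, via Proposition \ref{prop4.D} and the invariance of $\one$ under $U$, to the purely algebraic identity $\varphi[K](a^\circ)=\varphi[K^\circ](a)$ on $\A^0$; both then factor the automorphism $a\mapsto a^\circ$ into the flip $(a^\pm_x)^\lozenge:=a^\mp_x$ followed by the diagonal sign change, so that everything hinges on the hole-duality identity $\varphi[K](a^\lozenge)=\varphi[1-K](a)$ (your ``reversed-correlator formula''), together with the reality $K(x,y)=K(y,x)$ from Remark \ref{rem4.E}, which you invoke at exactly the right place. Your sign bookkeeping is correct, and your explicit check that $K^\circ=1-JKJ$ is a positive contraction is a point the paper leaves implicit. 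The one genuine difference is how the hole-duality identity is handled: the paper proves it self-containedly, by showing that $\varphi[K]$ restricted to $\Cliff^0(V)$ is a Pfaffian functional determined by its two-point bilinear form, so that composing with $\lozenge$ yields another Pfaffian functional whose two-point form is that of $\varphi[1-K]$; you instead cite it as the known particle/hole symmetry of quasifree states. That citation is legitimate --- the paper itself remarks that the fact ``can be extracted from Araki'' --- but your fallback claim that it follows from ``commuting all creators to the left through the CAR'' by an ``entirely routine'' computation is optimistic: the naive induction immediately produces monomials in mixed (neither normal nor anti-normal) order, so one in effect must establish the full Wick expansion of $\varphi[K]$ on arbitrary gauge-invariant words, which is precisely what the paper's Pfaffian-functional lemma packages. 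So to make your proof complete, either cite Araki (or the Wick theorem for quasifree states) cleanly, or reproduce a Pfaffian/Wick-type argument; the intermediate ``routine computation'' would need to be written out in full before it could be accepted.
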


This result is used below in the proof of Theorem \ref{thm8.B}. Note that the factor $(-1)^{\nu(x)-\nu(y)}$ depends on the linear order in $\X$. This is a manifestation of the fact that the very definition of canonical correlation kernel is tied to a linear order. Below we denote by $K^\circ$ the operator with the matrix $K^\circ(x,y)$. 

\begin{proof}
Since the constant function $\one$ is invariant under the particle/hole involution, we have
$$
\tau[M^\circ](a)=\tau[M](a^\circ), \quad a\in\A^0,
$$
where $a\mapsto a^\circ$ is the automorphism of $\A^0$ defined by \eqref{eq4.K} (here we also use Proposition \ref{prop4.D}).  Therefore, it suffices to prove that 
\begin{equation}\label{eq4.M1}
\varphi[K](a^\circ)=\varphi[K^\circ](a), \quad a\in\A^0.
\end{equation}

Let $a\mapsto a^\lozenge$ denote the automorphism of $\A$ defined by  $(a^\pm_x)^\lozenge=a^\mp_x$ for each $x\in\X$. The automorphism $a\mapsto a^\circ$ of the algebra $\A$ is the composition of $a\mapsto a^\lozenge$ with the automorphism multiplying each $a^\pm_x$ by $(-1)^{\nu(x)}$. Therefore, it suffices to prove the analogue of \eqref{eq4.M1} for $a\mapsto a^\lozenge$, which amounts to 
\begin{equation}\label{eq4.M}
\varphi[K](a^\lozenge)=\varphi[1-K](a), \quad a\in\A^0.
\end{equation}

This fact can be extracted from Araki \cite{Araki}, but for reader's convenience we give a proof. We need a little preparation. Let $V$ denote the (algebraic) linear span of the elements $a^\pm_x$, $x\in\X$. We equip $V$ with the symmetric bilinear form $\langle\ccdot, \ccdot\rangle$ defined by
$$
\langle a^+_x,a^+_y\rangle=\langle a^-_x,a^-_y\rangle=0, \quad \langle a^+_x,a^-_y\rangle=\de_{xy},
$$
so that $V$ becomes a quadratic space. Let $\Cliff(V)$ be the corresponding Clifford algebra; it is a dense subalgebra of $\A$. Let us say that a linear functional $\Phi:\Cliff(V)\to\C$ is a \emph{Pfaffian functional}\/ if $\Phi(1)=1$ and for any $n=1,2,\dots$ and any ordered $n$-tuple $v_1,\dots,v_n\in V$ one has
$$
\Phi(v_1\dots v_n)=\begin{cases}0, & \text{$n$ odd}, \\ \Pf(\mathcal A_N), & \text{$n$ even},\end{cases}
$$
where $\Pf(\mathcal A_N)$ is the Pfaffian of the $n\times n$ skew-symmetric matrix $\mathcal A_N$ with elements
$$
\mathcal A_N(i,i)=0,  \qquad \mathcal A_N(i,j)=-\mathcal A_N(j,i)=\Phi(v_iv_j), \quad i<j.
$$

A Pfaffian functional $\Phi$ is uniquely determined by the associated bilinear form on $V\times V$ given by
$$
\wt \Phi(v_1,v_2):=\Phi(v_1v_2),
$$
which satisfies the relation
$$
\wt\Phi(v_1,v_2)+\wt\Phi(v_2,v_1)=\langle v_1,v_2\rangle, \quad v_1,v_2\in V.
$$
Conversely, any bilinear form satisfying this relation gives rise to a Pfaffian functional: this can be deduced from Wick's formula or proved directly using the definition of $\Cliff(V)$ as a quotient of the tensor algebra of $V$. 

It follows that  if $\Phi(a)$ is a Pfaffian state, then the functional $\Phi^\lozenge: a\mapsto \Phi(a^\lozenge)$ is a Pfaffian functional, too, and the corresponding bilinear form $\wt\Phi^\lozenge(v_1,v_2)$ is given by 
$$
\wt\Phi^\lozenge(v_1,v_2)=\wt\Phi(v^\lozenge_1,v^\lozenge_2).
$$

Now we return to the proof of \eqref{eq4.M}. Let $\Cliff^0(V):=\A^0\cap\Cliff(V)$. The key observation is that the restriction of $\varphi[K]$ to $\Cliff^0(V)$ is given by the Pfaffian functional corresponding to the bilinear form 
$$
\wt\Phi(a^+_x, a^+_y)=\wt\Phi(a^-_x, a^-_y)=0, \quad \wt\Phi(a^+_x, a^-_y)=K(x,y), \quad \wt\Phi(a^-_y, a^+_x)=1-K(x,y).
$$
It follows that the functional $a\mapsto \varphi[K](a^\lozenge)$ is also given by a Pfaffian functional, and the corresponding bilinear form is
$$
\wt\Phi^\lozenge(a^+_x, a^+_y)=\wt\Phi(a^-_x, a^-_y)=0, \quad \wt\Phi^\lozenge(a^+_x, a^-_y)=1-K(y,x), \quad \wt\Phi^\lozenge(a^-_y, a^+_x)=K(y,x).
$$

In general, the kernel $K(x,y)$ is Hermitian symmetric. But since it is assumed to be a canonical kernel, it must be real valued (see Remark \ref{rem4.E}), so that $K(x,y)=K(y,x)$. symmetric. We conclude that the state $a\mapsto \varphi[K](a^\lozenge)$ coincides with $\varphi[1-K]$. 
\end{proof}

\section{Discrete orthogonal polynomial ensembles}\label{sect5}

So far $(\X,<)$ was an abstract set. In this section we deal with a more concrete situation: namely, we suppose that $\X$ is realized as a subset of $\R$. As before, we suppose that $\X$ is finite or countable. Next, we fix a positive integer  $N$ and  a strictly positive function $W(x)$ on $\X$. If $\X$ is finite, we suppose that $N<|\X|$, and if $\X$ is infinite, we require  
$$
\sum_{x\in\X}x^{2N} W(x)<\infty.
$$
Denote by $\Om_N\subset\Om$ the set of $N$-point subsets of $\X$. The \emph{$N$-point orthogonal polynomial ensemble with weight function $W$} is determined by the probability measure $M_{N,W}\in\PP(\Om)$ supported by $\Om_N$ and given by 
\begin{equation}\label{eq5.G}
M_{N,W}(\om)=\frac1{\mathcal Z_N}\cdot\prod_{i=1}^N W(u_i)\cdot \prod_{1\le i<j\le N} (u_i-u_j)^2, \qquad \om\in\Om_N,
\end{equation}
where $u_1,\dots,u_N$ are the points of $\om$ listed in an arbitrary order and $\mathcal Z_N$ is the normalization constant. 

Let $\K_{N,W}$ be the operator of orthogonal projection onto the $N$-dimensional subspace in $\ell^2(\X)$ spanned by the functions $x^n W^{1/2}(x)$, $n=0,1,\dots,N-1$, and let $K_{N,W}(x,y)=(\K_{N,W}e_y,e_x)$ be the corresponding kernel on $\X\times\X$. It is well known  that $M_{N,W}$ is a determinantal measure admitting $K_{N,W}(x,y)$ as a correlation kernel (see e.g. K\"onig  \cite{Konig}).  

We assume that the order on $\X$ is induced by the conventional order on $\R$.

\begin{theorem}\label{thm5.A}
Under the assumptions stated above, assume additionally, in the case $|\X|=\infty$, that the ordered set $(\X,<)$ has finite intervals. Then $M_{N,W}$ is a perfect measure in the sense of Definition \ref{def4.B} and $K_{N,W}(x,y)$ is its canonical kernel. \end{theorem}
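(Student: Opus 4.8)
The plan is to reduce the assertion to one identity for the values of $\tau[M_{N,W}]$ on monomials and then to collapse the resulting configuration sum. Since $\varphi[\K_{N,W}]$ is a state and, by Definition~\ref{def1.B}, is uniquely determined by its values on the normal-ordered monomials $a^+_{x_n}\cdots a^+_{x_1}a^-_{y_1}\cdots a^-_{y_n}$, which span a dense $*$-subalgebra of $\A^0$, it suffices to prove
\begin{equation}\label{eq-sketch1}
\tau[M_{N,W}](a^+_{x_n}\cdots a^+_{x_1}a^-_{y_1}\cdots a^-_{y_n})=\det[K_{N,W}(y_i,x_j)]_{i,j=1}^n
\end{equation}
for every $n$ and all $n$-tuples $X=(x_1,\dots,x_n)$ and $Y=(y_1,\dots,y_n)$ with pairwise distinct entries.

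Because $W>0$, the measure $M_{N,W}$ is equivalent to the counting measure $c$ on the single $\SS$-orbit $\Om_N$. By Proposition~\ref{prop2.B}(i) the multiplication operator $V\colon L^2(\Om,c)\to L^2(\Om,M_{N,W})$ by $M_{N,W}(\ccdot)^{-1/2}$ intertwines $T[c]$ with $T[M_{N,W}]$, and $V$ carries the vector $\psi$, $\psi(\om):=M_{N,W}(\om)^{1/2}$, to $\one$. Hence $\tau[M_{N,W}](a)=(T[c](a)\psi,\psi)_{\ell^2(\Om_N)}$. Now I would invoke the explicit description of $T[c]$ in Proposition~\ref{prop4.B}: the monomial sends $\de_\om$ to $\sgn(X;Y;\om)\,\de_{(\om\setminus Y)\cup X}$. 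Pairing with $\psi$ and writing $\om=S\cup Y$ turns the left-hand side of \eqref{eq-sketch1} into
\begin{equation}\label{eq-sketch2}
\sum_{S}M_{N,W}(S\cup Y)^{1/2}\,M_{N,W}(S\cup X)^{1/2}\,\sgn(X;Y;S\cup Y),
\end{equation}
where $S$ runs over the $(N-n)$-point subsets of $\X\setminus(X\cup Y)$.

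To evaluate \eqref{eq-sketch2} I would pass to the exterior algebra. Let $v_k:=\pi_{k-1}W^{1/2}$, $k=1,\dots,N$, be the orthonormal basis of $\operatorname{Ran}\K_{N,W}$ built from the orthonormal polynomials with positive leading coefficients; the summability hypothesis guarantees $v_k\in\ell^2(\X)$. Put $\Psi:=v_1\wedge\cdots\wedge v_N$, let $E_\om$ denote the basis polyvectors, and let $\FF(a^\pm_x)$ be the creation/annihilation operators on $\bigwedge\ell^2(\X)$ as in the proof of Proposition~\ref{prop4.A}. The identity $M_{N,W}(\om)=\det[K_{N,W}(x_i,x_j)]=\langle\Psi,E_\om\rangle^2$ (König \cite{Konig}), together with positivity of the Vandermonde for increasing arguments, gives $M_{N,W}(\om)^{1/2}=\langle\Psi,E_\om\rangle$ with no sign ambiguity. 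Writing $E_{S\cup Y}=\pm\,e_{y_1}\wedge\cdots\wedge e_{y_n}\wedge E_S$ and using $(a^+_y)^*=a^-_y$, each factor in \eqref{eq-sketch2} becomes $\langle\FF(a^-_{y_n}\cdots a^-_{y_1})\Psi,E_S\rangle$, respectively $\langle\FF(a^-_{x_n}\cdots a^-_{x_1})\Psi,E_S\rangle$. Provided $\sgn(X;Y;S\cup Y)$ absorbs the reordering signs into a constant $\varkappa$ independent of $S$, Parseval over the orthonormal basis $\{E_S\}$ of $\bigwedge^{N-n}\ell^2(\X)$ collapses \eqref{eq-sketch2} to
$$\varkappa\,\langle\FF(a^-_{x_n}\cdots a^-_{x_1})\Psi,\ \FF(a^-_{y_n}\cdots a^-_{y_1})\Psi\rangle=\varkappa\,\langle\Psi,\FF(a^+_{x_1}\cdots a^+_{x_n}a^-_{y_n}\cdots a^-_{y_1})\Psi\rangle,$$
and Wick's theorem for the Slater vector $\Psi$ (which defines a quasifree state with two-point function $K_{N,W}$) evaluates the right-hand side as $\pm\det[K_{N,W}(y_i,x_j)]$. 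Equivalently, expressing the two square roots in \eqref{eq-sketch2} through Vandermonde determinants recasts the sum as the average of a product of characteristic polynomials over the $(N-n)$-point ensemble, which the Strahov–Fyodorov formula \cite{StrFyo} evaluates as the same Christoffel–Darboux determinant.

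I expect the main obstacle to be the sign analysis, not the analytic collapse of the sum. One must show that the combinatorial factor $\sgn(X;Y;S\cup Y)$ of Proposition~\ref{prop4.B}, the Vandermonde signs hidden in the square roots, and the reordering signs relating $E_{S\cup X}$ and $E_{S\cup Y}$ to $E_S$ together produce a single sign independent of $S$, and moreover that this sign, combined with the sign in the Wick/Slater evaluation, forces $\varkappa=+1$ so that \eqref{eq-sketch1} holds with the correct determinant rather than its negative. Once the orientation bookkeeping is settled, the identity \eqref{eq-sketch1} follows, and with it the perfectness of $M_{N,W}$ and the identification of $K_{N,W}$ as its canonical kernel.
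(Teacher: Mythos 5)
Your skeleton agrees with the paper's proof up to and including the configuration sum: the paper likewise reduces the claim to the identity for normal-ordered monomials, passes to the counting measure on the orbit $\Om_N$ (via multiplication by $(M_{N,W})^{1/2}$, so that $\tau[M_{N,W}](a)=(T_N(a)F,F)$ with $F=\sum_{\om\in\Om_N}(M_{N,W}(\om))^{1/2}\de_\om$), and applies Proposition \ref{prop4.B} to obtain exactly your sum over $(N-n)$-point sets $S$ disjoint from $X\cup Y$. For the final evaluation the paper takes what you present as the secondary, ``equivalently'' branch: it rewrites the square roots through Vandermonde determinants and invokes the Strahov--Fyodorov formula \cite[Proposition 4.1]{StrFyo} together with the Christoffel--Darboux identity. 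Your primary branch (Slater vector $\Psi$ in $\bigwedge\ell^2(\X)$, Parseval over $\{E_S\}$, Wick) is a genuinely different mechanism for collapsing the sum and would also work.

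However, there is a gap, and it sits precisely at the step you defer: the sign analysis is not a technical afterthought but the one nontrivial observation on which the whole computation turns; without it neither of your branches closes. The paper's resolution is short and worth knowing: since $\X\subset\R$ carries the induced order, $\sgn(a,b)$ is literally the sign of $a-b$, so formula \eqref{eq4.A} shows that $\sgn(X;Y;S\cup Y)$ equals exactly the sign of the real number $V(X)V(Y)\prod_{u\in S}\prod_{i=1}^n(x_i-u)(y_i-u)$, where $V(X):=\prod_{i<j}(x_i-x_j)$ and similarly for $V(Y)$. Multiplying the nonnegative quantity $(M_{N,W}(S\cup X)M_{N,W}(S\cup Y))^{1/2}$, which contains the absolute value of that same number, by $\sgn(X;Y;S\cup Y)$ therefore strips the absolute value, and the resulting signed sum is verbatim the ensemble average that Strahov--Fyodorov evaluates; after cancellation with $\mathcal Z_{N-n}/\mathcal Z_N=(h_{N-n}\cdots h_{N-1})^{-1}$ one lands on $\det[K_{N,W}(y_i,x_j)]$ with no residual sign. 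For your Fock branch the analogous cancellation does hold: the $S$-dependent part of $\sgn(X;Y;S\cup Y)$, namely $\prod_{u\in S}\prod_i\sgn(x_i,u)\sgn(y_i,u)$, equals $(-1)^{\#\{(u,i)\,:\,u\in S\ \text{strictly between}\ x_i\ \text{and}\ y_i\}}$, and this coincides with the product of the $S$-dependent reordering signs in $e_{y_1}\wedge\cdots\wedge e_{y_n}\wedge E_S=\pm E_{S\cup Y}$ and its $X$-analogue, so your $\varkappa$ is indeed independent of $S$ (a constant overall sign in $M_{N,W}(\om)^{1/2}=\pm\langle\Psi,E_\om\rangle$ is harmless because it enters squared). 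But this verification, and the matching of $\varkappa$ against the Wick sign so as to obtain $+\det$ rather than $-\det$, is exactly the content your sketch postulates (``provided\dots'', ``one must show\dots'') instead of proving. Until that bookkeeping is carried out, the proposal is a correct road map with its central step missing.
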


\begin{proof}
Note that the set  $\Om_N$ forms an $\SS$-orbit and $M_{N,W}$ charges every $\om\in\Om_N$.  It follows that $M_{N,W}$ is $\SS$-quasi-invariant. Therefore,  Corollary \ref{cor4.A}  is applicable and the state $\tau[M_{N,W}]$ on $\A^0$ is well defined.  Our task is to prove that $\tau[M_{N,W}]$ is the quasifree state $\varphi[\K_{N,W}]$.

\emph{Step} 1. Consider the Hilbert space $\ell^2(\Om_N)$ with its distinguished basis $\{\de_\om: \om\in\Om_N\}$. Let $T_N$ denote the representation of $\A^0$ on that space, associated with the counting measure on $\Om_N$. The operator of multiplication by the function $(M_{N,W}(\ccdot))^{1/2}$ maps isometrically the weighted Hilbert space $\ell^2(\Om_N,M_{N,W})$ onto the Hilbert space $\ell^2(\Om_N)$, intertwines the representations $T[M_{N,W}]$ and $T_N$, and takes the distinguished vector $\one$ to the unit vector
$$
F:=\sum_{\om\in\Om_N} (M_{N,W}(\om))^{1/2} \de_\om 
$$
in $\ell^2(\Om_N)$. It follows that
$$
\tau[M_{N,W}](a)=(T_N(a)F,F), \qquad a\in\A^0,
$$
where the scalar product on the right-hand side is that of $\ell^2(\Om_N)$.

Therefore, the desired equality $\tau[M_{N,W}]=\varphi[\K_{N,W}]$ is reduced to the following one:
\begin{equation}\label{eq5.A}
(T_N(a^+_{x_n}\dots a^+_{x_1}a^-_{y_1}\dots a^-_{y_n})F,F)=\det[K_{N,W}(y_i,x_j)],
\end{equation}
where $n=1,2,\dots,N$, and $\{x_1,\dots,x_n\}$ and $\{y_1,\dots,y_n\}$ are arbitrary $n$-tuples of points from $\X$. Without loss of generality we may assume that $x_i\ne x_j$ and $y_i\ne y_j$ for $i\ne j$, because otherwise both sides of \eqref{eq5.A} vanish. We have excluded the case $n>N$ because then both sides vanish, too.

\emph{Step} 2. Let $p_0, p_1,p_2,\dots$ be the monic orthogonal polynomials with weight function $W(x)$, and let $h_k$ denote the squared norm of $p_k$:
$$
h_k:=\sum_{x\in\X}p^2_k(x)W(x), \qquad 0\le k\le N.
$$
By virtue of the classical Christoffel--Darboux identity (see e.g. \cite[\S 10.4, (10)]{Er2}),
\begin{equation}\label{eq5.H}
K_{N,W}(y,x)=\frac1{h_{N-1}}\frac{p_N(y)p_{N-1}(x)-p_{N-1}(y)p_N(x)}{y-x}\sqrt{W(y)W(x)}.
\end{equation}
Next, abbreviate 
$$
a:=a^+_{x_n}\dots a^+_{x_1}a^-_{y_1}\dots a^-_{y_n}.
$$
Then the desired equality \eqref{eq5.A} takes the form 
\begin{equation}\label{eq5.I}
(T_N(a)F,F)=\det\left[\frac1{h_{N-1}}\,\frac{p_N(x_i)p_{N-1}(y_j)-p_{N-1}(x_i)p_N(y_j)}{x_i-y_j}\, \sqrt{W(x_i)W(y_j)}\right]_{i,j=1}^n.
\end{equation}

\emph{Step} 3. We proceed to the proof of \eqref{eq5.I}. Abbreviate $X=\{x_1,\dots,x_n\}$, $Y:=\{y_1,\dots,y_n\}$. 
By virtue of Proposition \ref{prop4.B},
\begin{equation}\label{eq5.A1}
T_N(a)\de_\om=\sgn(X;Y; \om)\de_{(\om\setminus Y)\cup X},
\end{equation}
where the quantity $\sgn(X;Y;\om)$ is defined by \eqref{eq4.A}. Recall that it vanishes unless $\om$ satisfies the conditions 
\begin{equation}\label{eq5.B}
\om\supseteq Y, \qquad  (\om\setminus Y)\cap X=\varnothing.
\end{equation} 
Below we consider only such $\om$'s. 

From \eqref{eq5.A1} and the definition of $F$ it follows that 
\begin{equation}\label{eq5.B1}
(T_N(a)F,F)=\sum_{\om}(M_{N,W}(\om)M_{N,W}((\om\setminus Y)\cup X))^{1/2} \sgn(X;Y;\om)
\end{equation}
(note that $(\om\setminus Y)\cup X\in\Om_N$ due to \eqref{eq5.B}). 

Let $\Om'_{N-n}\subset\Om_{N-n}$ denote the set of those $\om'\in\Om_{N-n}$ that have empty intersection with both $X$ and $Y$. The above relation can be rewritten as
$$
(T_N(a)F,F)=\sum_{\om'\in\Om'_{N-n}}(M_{N,W}(\om'\cup X)M_{N,W}(\om'\cup Y))^{1/2} \sgn(X;Y;\om'\cup Y). 
$$

Next, we set
\begin{gather*}
V(X):=\prod_{1\le i<j\le N}(x_i-x_j), \quad  V(Y):=\prod_{1\le i<j\le N}(y_i-y_j), \\
W(\om'):=\prod_{k=1}^{N-n}W(u_k), \quad V^2(\om'):=\prod_{1\le k<l\le N-n}(u_k-u_l)^2,
\end{gather*}
where $u_1,\dots,u_{N-n}$ are the points of $\om'$ enumerated in an arbitrary order.

Using this notation and the definition of $M_{N,W}$ (see \eqref{eq5.G}) we may further write \eqref{eq5.B1} as 
\begin{multline}\label{eq5.C}
(T_N(a)F,F)=\frac{\mathcal Z_{N-n}}{\mathcal Z_N}|V(X)V(Y)| (W(X)W(Y))^{1/2}\\
\cdot \frac1{\mathcal Z_{N-n}}\sum_{\om'\in\Om'_{N-n}}W(\om')V^2(\om')\left|\prod_{u\in \om'}\prod_{i=1}^n (x_i-u)(y_i-u)\right|\sgn(X;Y;\om'\cup Y).
\end{multline}

Now we apply formula \eqref{eq4.A}, where we set $\om:=\om'\cup Y$. It shows that $\sgn(X,Y;\om'\cup Y)$ is just the sign of 
$$
V(X)V(Y)\prod_{u\in \om'}\prod_{i=1}^n (x_i-u)(y_i-u)
$$
(here we use the fact that $\sgn(a,b)$ is the sign of $a-b$). This allows us to get rid of the absolute values in  \eqref{eq5.C} and write this  relation as
\begin{multline}\label{eq5.D}
(T_N(a)F,F)=\frac{\mathcal Z_{N-n}}{\mathcal Z_N}V(X)V(Y)(W(X)W(Y))^{1/2}\\
\cdot \frac1{\mathcal Z_{N-n}}\sum_{\om'\in\Om_{N-n}}W(\om')V^2(\om')\prod_{u\in \om'}\prod_{i=1}^n (x_i-u)(y_i-u).
\end{multline}
Note that we have replaced $\Om'_{N-n}$ by the larger set $\Om_{N-n}$. This is justified because if $\om'$ is not contained in $\Om'_{N-n}$, then the double product on the right-hand side automatically vanishes. 

\emph{Step} 4. The expression on the second line of \eqref{eq5.D} is the average of the function
$$
\om'\mapsto \prod_{u\in \om'}\prod_{i=1}^n (x_i-u)(y_i-u)
$$ 
over the $(N-n)$-point orthogonal polynomial ensemble with the weight function $W$. For such a quantity there exists a general formula, see Strahov--Fyodorov \cite[Proposition 4.1]{StrFyo}. In our notation it reads as follows:
\begin{multline}\label{eq5.E}
\frac1{\mathcal Z_{N-n}}\sum_{\om'\in\Om_{N-n}}W(\om')V^2(\om')\prod_{u\in \om'}\prod_{i=1}^n (x_i-u)(y_i-u)\\
=\frac{h_{N-n}\dots h_{N-1}}{(h_{N-1})^n}\frac1{V(X)V(Y)}\det\left[\frac{p_N(x_i)p_{N-1}(y_j)-p_{N-1}(x_i)p_N(y_j)}{x_i-y_j}\right]_{i,j=1}^n
\end{multline}
(note that the quantities $N$, $K$, and $c_l$ in the formulation of \cite[Proposition 4.1]{StrFyo} correspond, in our notation, to $N-n$, $n$, and $\sqrt{h_l}$, respectively). 

Recall the well-known formula $\mathcal Z_N=h_0\dots h_{N-1}$; it yields 
\begin{equation}\label{eq5.F}
\frac{\mathcal Z_{N-n}}{\mathcal Z_N}=\frac1{h_{N-n}\dots h_{N-1}}.
\end{equation}

Finally, from \eqref{eq5.D}, \eqref{eq5.E}, and \eqref{eq5.F} we obtain after cancellations the desired equality \eqref{eq5.I}.   
\end{proof}

\section{Conditional measures}\label{sect6}

This section is a variation on the theme of Lyons \cite[\S6]{Lyons}.
Throughout the section $\X$ is a countable set with no additional structure. 

\subsection{Reduction of linear subspaces}
Introduce some notation: $E$ denotes the complex Hilbert space $\ell^2(\X)$ with its canonical orthonormal basis $\{e_x: x\in\X\}$; for an arbitrary subset $A\subseteq\X$, we denote by $E_A$ the closed subspace of $E$ spanned by the basis vectors $e_x$ with $x\in A$; the \emph{Grassmannian} $\Gr(E)$ is the set of all closed linear subspaces of $E$; likewise, we define the set $\Gr(E_A)$ for any subset $A\subset\X$; the symbol $\sqcup$ denotes disjoint union. 

\begin{definition}\label{def6.A}
Let $L\in\Gr(E)$ and let $X$ and $X'$ be two nonintersecting finite subsets of $\X$. The subspace of $E_{\X\setminus(X\sqcup X')}$ defined by
\begin{equation}\label{eq6.A}
L_{X,X'}:=(L+E_{X'})\cap E_{\X\setminus(X\sqcup X')}
\end{equation}
will be called the \emph{$(X,X')$-reduction of $L$}. An equivalent description is the following: consider the orthogonal decomposition
$$
E=E_{\X\setminus(X\sqcup X')} \oplus E_X \oplus E_{X'};
$$
then $L_{X,X'}$ consists of those vectors $v$ in the first summand, for which there exists a vector $u\in E_{X'}$ such that $v\oplus 0\oplus u\in L$. 
\end{definition}

$L_{X,X'}$ is a closed subspace, because $\dim(E_{X'})<\infty$. Thus, $L\mapsto L_{X,X'}$ is a map $\Gr(E)\to \Gr(E_{\X\setminus(X\sqcup X')})$. Note two particular cases:
\begin{equation}\label{eq6.B1}
L_{\varnothing, X'}=(L+E_{X'})\cap E_{\X\setminus X'}
\end{equation}
and
\begin{equation}\label{eq6.B2}
 L_{X,\varnothing}=L\cap E_{\X\setminus X}.
\end{equation} 
The following interpretation of \eqref{eq6.B1} is similar to \eqref{eq6.B2}:  
\begin{equation}\label{eq6.B3}
\text{$L_{\varnothing,X'}$ is the result of orthogonal projection of $L$ onto $E_{\X\setminus X'}$.}
\end{equation}

\begin{lemma}\label{lemma6.A}
{\rm(i)}  We have 
\begin{equation}\label{eq6.C1}
L_{X,X'}=(L_{\varnothing, X'})_{X,\varnothing}.
\end{equation}

{\rm(ii)} If $X'=X'_1\sqcup X'_2$, then
\begin{equation}\label{eq6.C2}
L_{\varnothing,X'}=(L_{\varnothing,X'_1})_{\varnothing,X'_2}.
\end{equation}

{\rm(iii)} Likewise, if $X=X_1\sqcup X_2$, then 
\begin{equation}\label{eq6.C3}
L_{X,\varnothing}=(L_{X_1,\varnothing})_{X_2,\varnothing}.
\end{equation}
\end{lemma}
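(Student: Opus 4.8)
The plan is to derive all three identities by unwinding Definition \ref{def6.A}, using only two elementary facts about coordinate subspaces. First, $E_A\cap E_B=E_{A\cap B}$, and $E_A\subseteq E_B$ whenever $A\subseteq B$. Second, writing $Q_A$ for the orthogonal projection of $E$ onto $E_A$, nested projections compose as $Q_AQ_B=Q_A$ whenever $A\subseteq B$; this holds because $A\subseteq B$ gives $\Ker Q_A=E_A^\perp\supseteq E_B^\perp=\operatorname{ran}(1-Q_B)$, so that $Q_A(1-Q_B)=0$. Throughout I must keep track of the fact that in an iterated reduction such as $(L_{\varnothing,X'})_{X,\varnothing}$ the inner object lives in $E_{\X\setminus X'}$, so the outer reduction is the same construction carried out with index set $\X\setminus X'$ in place of $\X$; this is legitimate precisely because the relevant subsets are mutually disjoint.

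I would prove (iii) and (i) first, since both are purely set-theoretic. For (iii), \eqref{eq6.B2} gives $L_{X_1,\varnothing}=L\cap E_{\X\setminus X_1}\subseteq E_{\X\setminus X_1}$, and applying \eqref{eq6.B2} again inside $E_{\X\setminus X_1}$ (valid as $X_2\subseteq\X\setminus X_1$) yields
\begin{equation*}
(L_{X_1,\varnothing})_{X_2,\varnothing}=(L\cap E_{\X\setminus X_1})\cap E_{\X\setminus(X_1\sqcup X_2)}.
\end{equation*}
Since $\X\setminus(X_1\sqcup X_2)\subseteq\X\setminus X_1$, the last factor absorbs the middle one and the right-hand side collapses to $L\cap E_{\X\setminus(X_1\sqcup X_2)}=L_{X,\varnothing}$, which is \eqref{eq6.C3}. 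For (i), expanding the outer reduction by \eqref{eq6.B2} and the inner one by \eqref{eq6.B1},
\begin{equation*}
(L_{\varnothing,X'})_{X,\varnothing}=L_{\varnothing,X'}\cap E_{\X\setminus(X\sqcup X')}=\bigl((L+E_{X'})\cap E_{\X\setminus X'}\bigr)\cap E_{\X\setminus(X\sqcup X')}.
\end{equation*}
As $\X\setminus(X\sqcup X')\subseteq\X\setminus X'$, the factor $E_{\X\setminus X'}$ is again absorbed, leaving $(L+E_{X'})\cap E_{\X\setminus(X\sqcup X')}$, which equals $L_{X,X'}$ by \eqref{eq6.A}. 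This proves \eqref{eq6.C1}.

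The only part calling for a different tool is (ii), which I expect to be the main (if modest) obstacle: handling it through the intersection formula \eqref{eq6.B1} is awkward because sum and intersection of subspaces do not commute. Instead I would use the orthogonal-projection description \eqref{eq6.B3}, which identifies $L_{\varnothing,X'}$ with $Q_{\X\setminus X'}(L)$. Then $(L_{\varnothing,X'_1})_{\varnothing,X'_2}$ is the orthogonal projection of $Q_{\X\setminus X'_1}(L)$ onto $E_{(\X\setminus X'_1)\setminus X'_2}=E_{\X\setminus X'}$; since $Q_{\X\setminus X'_1}(L)\subseteq E_{\X\setminus X'_1}$ and $E_{\X\setminus X'}\subseteq E_{\X\setminus X'_1}$, this projection is the restriction of the global projection $Q_{\X\setminus X'}$, so that
\begin{equation*}
(L_{\varnothing,X'_1})_{\varnothing,X'_2}=Q_{\X\setminus X'}\bigl(Q_{\X\setminus X'_1}(L)\bigr)=(Q_{\X\setminus X'}Q_{\X\setminus X'_1})(L).
\end{equation*}
Because $X'_1$ and $X'_2$ are disjoint we have $\X\setminus X'\subseteq\X\setminus X'_1$, so the nested-projection rule gives $Q_{\X\setminus X'}Q_{\X\setminus X'_1}=Q_{\X\setminus X'}$; hence the right-hand side is $Q_{\X\setminus X'}(L)=L_{\varnothing,X'}$, which is \eqref{eq6.C2}.
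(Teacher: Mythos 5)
Your proof is correct and follows essentially the same route as the paper's: the paper derives (i) from \eqref{eq6.A}, (ii) from the projection interpretation \eqref{eq6.B3}, and (iii) from \eqref{eq6.B2}, exactly the ingredients you use. You have simply written out the absorption-of-intersections and nested-projection details that the paper leaves as ``directly follows.''
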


\begin{proof}
Claim (i) directly follows from \eqref{eq6.A}. Claim (ii) directly follows from  \eqref{eq6.B3}. 
 Claim (iii) directly follows from \eqref{eq6.B2}. 
 \end{proof}

Using these relations we may decompose the reduction operation $(\ccdot)_{X,X'}$ into a composition of \emph{elementary reductions} --- those  in which one of the two sets is empty and the other is a singleton. 

\begin{remark}\label{rem6.A}
1. Our definition \eqref{eq6.A} agrees with Lyons' definition \cite[(6.3), (6.4)]{Lyons} in the case $X=\varnothing$ but not in the case $X'=\varnothing$. The difference is caused by a difference in the definition of conditional measures (see below).  

2. One can show that for any splittings $X=X_1\sqcup X_2$ and $X'=X'_1\sqcup X'_2$ one has
$$
L_{X,X'}=(L_{X_1,X'_1})_{X_2,X'_2},
$$
but we do not need this more general relation. Its proof is similar to that of \cite[Corollary 6.4]{Lyons}.
\end{remark}

\begin{definition}\label{def6.B}
Let us say that $L\in\Gr(E)$ is \emph{$(X,X')$-regular} if $L\cap E_{X'}=\{0\}$ and $L^\perp\cap E_X=\{0\}$, where $L^\perp$ denotes 
the orthogonal complement to $L$ in $E$.
\end{definition}

Obviously, if $X_1\subseteq X$, $X'_1\subseteq X'$, and $L$ is $(X,X')$-regular, then $L$ is $(X_1,X'_1)$-regular, too. 

\begin{lemma}\label{lemma6.B}
Let $L$ be $(X,X')$-regular.

{\rm(i)} For any splitting $X'=X'_1\sqcup X'_2$, the subspace $L_{\varnothing, X'_1}\subset E_{\X\setminus X'_1}$ is $(X, X'_2)$-regular.

{\rm(ii)} For any splitting $X=X_1\sqcup X_2$, the subspace $L_{X_1,\varnothing}\subset E_{\X\setminus X_1}$ is $(X_2, X')$-regular.
\end{lemma}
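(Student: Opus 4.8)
The plan is to split each of (i) and (ii) into its two constituent requirements — a ``trivial-intersection'' condition and a ``trivial-complement'' condition — and to handle all four by elementary manipulations with the orthogonal decompositions $E=E_{\X\setminus(X\sqcup X')}\oplus E_X\oplus E_{X'}$ used in Definition \ref{def6.A}. The only non-formal ingredient will be an identity computing the orthogonal complement of a reduced subspace, and this identity will make (i) and (ii) mirror images of one another under $L\leftrightarrow L^\perp$.

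First I would dispose of the two intersection conditions. For (ii) the condition $L_{X_1,\varnothing}\cap E_{X'}=\{0\}$ is immediate: since $X'\cap X_1=\varnothing$ we have $E_{X'}\subseteq E_{\X\setminus X_1}$, so by \eqref{eq6.B2}
\[
L_{X_1,\varnothing}\cap E_{X'}=(L\cap E_{\X\setminus X_1})\cap E_{X'}=L\cap E_{X'}=\{0\},
\]
using $(X,X')$-regularity. For (i) take $w\in L_{\varnothing,X'_1}\cap E_{X'_2}$ and write it, via \eqref{eq6.B1}, as $w=v+u$ with $v\in L$ and $u\in E_{X'_1}$; then $v=w-u\in E_{X'_2}+E_{X'_1}=E_{X'}$, so $v\in L\cap E_{X'}=\{0\}$, whence $w=u\in E_{X'_1}\cap E_{X'_2}=\{0\}$.

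The complement conditions carry the real content, and for them I would first establish a duality identity, valid for any closed $L\subseteq E$ and any finite $A\subseteq\X$: writing $\perp$ for the orthogonal complement taken inside $M:=E_{\X\setminus A}$,
\[
(L_{\varnothing,A})^{\perp}=L^\perp\cap E_{\X\setminus A}=(L^\perp)_{A,\varnothing},
\qquad
(L_{A,\varnothing})^{\perp}=(L^\perp)_{\varnothing,A}.
\]
Since by \eqref{eq6.B3} $L_{\varnothing,A}$ is the orthogonal projection $P_M(L)$, the first identity is the one-line general fact $(P_ML)^{\perp}=L^\perp\cap M$, which follows from $\langle w,P_Mv\rangle=\langle P_Mw,v\rangle$ for $w\in M$. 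The second identity follows by applying the first to $L^\perp$, giving $(P_ML^\perp)^{\perp}=L\cap M=L_{A,\varnothing}$, and then taking complements in $M$ once more; here one uses $\dim E_A<\infty$, so that $P_M(L^\perp)=(L^\perp+E_A)\cap M=(L^\perp)_{\varnothing,A}$ is already closed and the double complement returns it.

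With these identities in hand both complement conditions become instances of the intersection conditions applied to $L^\perp$ (which is $(X',X)$-regular). For (i) we get $(L_{\varnothing,X'_1})^{\perp}\cap E_X=(L^\perp\cap E_{\X\setminus X'_1})\cap E_X=L^\perp\cap E_X=\{0\}$, because $X\subseteq\X\setminus X'\subseteq\X\setminus X'_1$ and $L^\perp\cap E_X=\{0\}$ by regularity. For (ii) I would use $(L_{X_1,\varnothing})^{\perp}=(L^\perp)_{\varnothing,X_1}$ and then rerun the argument of the second paragraph verbatim with $L^\perp$ in place of $L$ and the splitting $X=X_1\sqcup X_2$, obtaining $(L^\perp)_{\varnothing,X_1}\cap E_{X_2}=\{0\}$. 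In fact the whole lemma is governed by the symmetry $L\leftrightarrow L^\perp$, $(X,X')\leftrightarrow(X',X)$, which by the duality identity exchanges $L_{\varnothing,\ccdot}$ with $L_{\ccdot,\varnothing}$ and thereby interchanges (i) and (ii), so it would suffice to prove (i). The one genuinely non-formal point, and the step I expect to require the most care, is the closedness invoked in the second duality identity — that the projection $P_{\X\setminus A}(L^\perp)$ of a closed subspace remains closed — which is exactly where the finiteness of $A$ enters.
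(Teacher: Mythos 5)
Your proof is correct, and for part (i) it coincides with the paper's: the same computation $(L_{\varnothing,X'_1})^\perp=L^\perp\cap E_{\X\setminus X'_1}$ (the paper's formula contains a typo, $\X\setminus X'_2$ in place of $\X\setminus X'_1$) followed by intersecting with $E_X$. The real difference is in part (ii), where the paper disposes of the complement condition in one sentence: the nonexistence of nonzero vectors of $E_{X_2}$ orthogonal to $L\cap E_{\X\setminus X_1}$ is said to follow from the ``stronger'' condition $L^\perp\cap E_X=\{0\}$. That deduction is not mere monotonicity --- orthogonality to the smaller space $L\cap E_{\X\setminus X_1}\subseteq L$ is a \emph{weaker} constraint on a vector than orthogonality to $L$ --- and what makes it rigorous is precisely the identity you prove, $(L_{X_1,\varnothing})^\perp=(L^\perp)_{\varnothing,X_1}$, equivalently $(L\cap E_{\X\setminus X_1})^\perp=L^\perp+E_{X_1}$ inside $E$, whose validity rests on the closedness of $L^\perp+E_{X_1}$, i.e.\ on $|X_1|<\infty$. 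With that identity, a vector $w\in E_{X_2}$ orthogonal to $L_{X_1,\varnothing}$ can be written as $w=v+u$ with $v\in L^\perp$, $u\in E_{X_1}$, forcing $v\in L^\perp\cap E_X=\{0\}$ and then $w=0$ --- exactly your argument. So your write-up supplies a step the paper leaves implicit, and your further observation that the duality $L\leftrightarrow L^\perp$, $(X,X')\leftrightarrow(X',X)$ exchanges $L_{\varnothing,\ccdot}$ with $L_{\ccdot,\varnothing}$ and hence interchanges (i) and (ii) is a clean structural explanation, absent from the paper, of why the two halves of the lemma are mirror images; it would let you prove only (i) and obtain (ii) for free.
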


\begin{proof}
(i) We have to prove that $L_{\varnothing, X'_1}\cap E_{X'_2}=\{0\}$ and $(L_{\varnothing, X'_1})^\perp\cap E_X=\{0\}$. The first equality is trivial. Further, from \eqref{eq6.B3} it follows that $(L_{\varnothing, X'_1})^\perp=L^\perp\cap E_{\X\setminus X'_2}$, and from this we see that the subsequent intersection  with $E_X$ is trivial, because $L^\perp \cap E_X=\{0\}$.

(ii) Here we have to prove that $L_{X_1,\varnothing}\cap E_{X'}=\{0\}$ and $(L_{X_1,\varnothing})^\perp\cap E_{X_2'}=\{0\}$. By \eqref{eq6.B2},  $L_{X_1,\varnothing}=L\cap E_{\X\setminus X_1}$. From this, the first equality is immediate. Further, the second equality means that there are no nonzero vectors in $E_{X'_2}$ orthogonal to $L\cap E_{\X\setminus X_1}$, but this follows from the stronger condition that there are no nonzero vectors in $E_{X'}$ orthonal to $L$. 
\end{proof}

Introduce one more notation: the operator of orthogonal projection onto a subspace $L$ will be denoted by $[L]$.

\begin{lemma}\label{lemma6.C}
Let $x$ and $y$ be arbitrary points of $\X$.

{\rm(i)} Suppose that $L$ is $(\varnothing,\{y\})$-regular and write the projection $[L]$ in the block form $\begin{bmatrix}a & b\\ c & d\end{bmatrix}$ according to the orthogonal decomposition $E=E_{\X\setminus\{y\}}\oplus E_{\{y\}}$. In this notation, 
\begin{equation}\label{eq6.D1}
[L_{\varnothing, \{y\}}]=a+b(1-d)^{-1}c.
\end{equation} 

{\rm(ii)} Suppose that $L$ is $(\{x\},\varnothing)$-regular and write the projection $[L]$ in the block form $\begin{bmatrix}a & b\\ c & d\end{bmatrix}$ according to the orthogonal decomposition $E=E_{\X\setminus\{x\}}\oplus E_{\{x\}}$. In this notation, 
\begin{equation}\label{eq6.D2}
[L_{\{x\},\varnothing}]=a-bd^{-1}c.
\end{equation}
\end{lemma}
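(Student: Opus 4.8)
The plan is to prove part (ii) by a direct Schur-complement computation and then obtain part (i) from it by passing to orthogonal complements, exploiting the evident duality between the two reduction operations. I would begin by recording the operator identities encoded in $P:=[L]$ being a projection. Writing $P$ in the block form $\begin{bmatrix} a & b\\ c & d\end{bmatrix}$ of the statement, self-adjointness gives $a^*=a$, $d^*=d$, $c=b^*$, while $P^2=P$ gives
\begin{equation*}
a^2+bc=a, \qquad ab+bd=b, \qquad ca+dc=c, \qquad cb+d^2=d.
\end{equation*}
The crucial point is that $E_{\{x\}}$ (resp.\ $E_{\{y\}}$) is one-dimensional, so $d=(Pe_x,e_x)$ is a \emph{scalar} in $[0,1]$; it therefore commutes with everything, and the middle two relations become $ab=(1-d)b$ and $ca=(1-d)c$, while the last becomes $cb=d(1-d)$. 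Before using these I would check that the regularity hypotheses legitimize the relevant inverse: $(\{x\},\varnothing)$-regularity means $L^\perp\cap E_{\{x\}}=\{0\}$, i.e.\ $Pe_x\ne0$, so $d=\|Pe_x\|^2>0$; dually, $(\varnothing,\{y\})$-regularity means $L\cap E_{\{y\}}=\{0\}$, i.e.\ $e_y\notin L$, so $d=\|Pe_y\|^2<1$ and $1-d$ is invertible.

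For part (ii), set $R:=a-bd^{-1}c$; self-adjointness of $R$ is immediate from $a^*=a$, $c=b^*$. Idempotency $R^2=R$ I would verify by substituting the relations above: using $ab=(1-d)b$, $ca=(1-d)c$, and $cb=d(1-d)$, each cross term $ab\,d^{-1}c$, $b\,d^{-1}ca$, and $b\,d^{-1}(cb)\,d^{-1}c$ collapses to the scalar multiple $(1-d)d^{-1}bc$, and after combining with $a^2=a-bc$ the contributions telescope to $a-bd^{-1}c=R$. Thus $R$ is an orthogonal projection. It then remains to identify its range with $L_{\{x\},\varnothing}=L\cap E_{\X\setminus\{x\}}$, recalling \eqref{eq6.B2}. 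Writing $H_0:=E_{\X\setminus\{x\}}$, a vector $v_0\in H_0$ lies in $L\cap H_0$ exactly when $(1-a)v_0=0$ and $cv_0=0$; any such $v_0$ is fixed by $R$ since $Rv_0=av_0-bd^{-1}cv_0=v_0$. For the reverse inclusion I would compute $cR=ca-(cb)d^{-1}c=(1-d)c-(1-d)c=0$ and $aR=a^2-ab\,d^{-1}c=(a-bc)-(1-d)bd^{-1}c=R$, so every vector in the range of $R$ is annihilated by $c$ and fixed by $a$, hence lies in $L\cap H_0$. Therefore $\operatorname{range}(R)=L_{\{x\},\varnothing}$, which proves (ii).

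For part (i) I would invoke the complementation duality. Taken inside $E_{\X\setminus\{y\}}$, the orthogonal complement of the projection $L_{\varnothing,\{y\}}$ of $L$ onto $E_{\X\setminus\{y\}}$ (see \eqref{eq6.B3}) is exactly $L^\perp\cap E_{\X\setminus\{y\}}=(L^\perp)_{\{y\},\varnothing}$; this is the same fact used in the proof of Lemma \ref{lemma6.B}(i). Hence $1-[L_{\varnothing,\{y\}}]=[(L^\perp)_{\{y\},\varnothing}]$. Now $L$ being $(\varnothing,\{y\})$-regular is precisely the statement that $L^\perp$ is $(\{y\},\varnothing)$-regular, so part (ii) applies to $L^\perp$ at the point $y$. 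Since $[L^\perp]=1-P$ has blocks $1-a$, $-b$, $-c$, $1-d$, it yields
\begin{equation*}
[(L^\perp)_{\{y\},\varnothing}]=(1-a)-(-b)(1-d)^{-1}(-c)=(1-a)-b(1-d)^{-1}c.
\end{equation*}
Equating the two expressions gives $[L_{\varnothing,\{y\}}]=a+b(1-d)^{-1}c$, which is (i).

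The genuinely delicate points, which I would settle at the outset, are the two I flagged above: extracting from the regularity hypotheses that $d\ne0$ (resp.\ $d\ne1$) so that the Schur complement is defined, and upgrading the range identification from an inclusion to an equality, which is handled cleanly by the two short identities $cR=0$ and $aR=R$. Everything else is routine algebra made simple by the scalar nature of $d$.
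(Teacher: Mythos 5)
Your proof is correct, and it is genuinely more self-contained than the paper's: the paper does not prove this lemma internally at all, but simply cites Propositions 7.3 and 7.4 of Bufetov--Olshanski \cite{BufOls}, adding only the remark that the regularity hypotheses mean exactly $1-d\ne0$ and $d\ne0$ (the same observation you flag at the outset). What you supply in place of that citation is a complete direct argument, and it checks out: the block relations $a^2+bc=a$, $ab=(1-d)b$, $ca=(1-d)c$, $cb=d(1-d)$ coming from $P^2=P=P^*$ are right; the one-dimensionality of $E_{\{x\}}$ does make $d=\Vert Pe_x\Vert^2$ a scalar, which is what collapses all three cross terms in $R^2$ to $(1-d)d^{-1}bc$ and yields $R^2=a-d^{-1}bc=R$; the identities $cR=0$ and $aR=R$ do upgrade the range inclusion to the equality $\operatorname{range}(R)=L\cap E_{\X\setminus\{x\}}$, which by \eqref{eq6.B2} is exactly $L_{\{x\},\varnothing}$; and the duality step for (i) is sound, since for $w\in E_{\X\setminus\{y\}}$ one has $(w,\operatorname{proj}\,v)=(w,v)$ for all $v\in L$, so the complement of $L_{\varnothing,\{y\}}$ inside $E_{\X\setminus\{y\}}$ is $L^\perp\cap E_{\X\setminus\{y\}}=(L^\perp)_{\{y\},\varnothing}$ (the same fact the paper uses in Lemma \ref{lemma6.B}(i)), and $(\varnothing,\{y\})$-regularity of $L$ is literally $(\{y\},\varnothing)$-regularity of $L^\perp$. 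The trade-off: the paper's reference presumably covers Schur complements over blocks of arbitrary finite dimension, while your argument leans on $d$ being a scalar and so covers only singleton reductions --- but that is precisely what the lemma asserts, and the paper's own reduction machinery (Lemmas \ref{lemma6.A} and \ref{lemma6.B}) builds the general case from these elementary ones, so nothing is lost.
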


\begin{proof}
See \cite[Propositions 7.3 and 7.4]{BufOls}. Note that the assumptions in items (i) and (ii) precisely mean that $1-d\ne0$ and $d\ne0$, respectively. Consequently, the right-hand sides of \eqref{eq6.D1} and \eqref{eq6.D2} make sense. 
\end{proof}

Using the correspondence $L\mapsto [L]$, we equip $\Gr(E)$ with the topology induced by the weak operator topology on the set of projection operators. Note that for projection operators, the weak topology is the same as the strong topology. Note also that this topology is metrizable, so that it can be replaced by sequential convergence. 

\begin{lemma}\label{lemma6.D}
A sequence $\{L_N\}$ converges in the space\/ $\Gr(E)$ to some element $L$ if and only if any vectors $\xi\in L$ and $\eta\in L^\perp$ can be approximated by some sequences $\xi_N\in L_N$ and $\eta_N\in L^\perp_N$, respectively.
\end{lemma}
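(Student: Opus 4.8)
The plan is to recast convergence in $\Gr(E)$ as strong operator convergence of the associated orthogonal projections. Writing $P_N:=[L_N]$ and $P:=[L]$, the topology on $\Gr(E)$ is by definition the one induced by the weak operator topology on projections, which---as noted just above---coincides with the strong operator topology on this set. Thus $L_N\to L$ means precisely that $P_N v\to Pv$ for every $v\in E$, and the orthogonal complements are handled automatically since $[L_N^\perp]=1-P_N$ and $[L^\perp]=1-P$. Throughout I will use only the single estimate $\|P_N\|\le 1$, valid because each $P_N$ is a projection.

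For the forward implication, assume $P_N\to P$ strongly. Given $\xi\in L$ we have $P\xi=\xi$, so setting $\xi_N:=P_N\xi\in L_N$ yields $\xi_N=P_N\xi\to P\xi=\xi$. Symmetrically, given $\eta\in L^\perp$ we have $(1-P)\eta=\eta$, and $\eta_N:=(1-P_N)\eta\in L_N^\perp$ satisfies $\eta_N\to(1-P)\eta=\eta$. This produces the required approximating sequences.

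For the converse, assume every $\xi\in L$ and $\eta\in L^\perp$ admits approximants $\xi_N\in L_N$ and $\eta_N\in L_N^\perp$. Fix an arbitrary $v\in E$ and decompose it orthogonally as $v=\xi+\eta$ with $\xi=Pv\in L$ and $\eta=(1-P)v\in L^\perp$; I want to show $P_Nv\to\xi$. Choose $\xi_N\in L_N$ with $\xi_N\to\xi$ and $\eta_N\in L_N^\perp$ with $\eta_N\to\eta$. Since $P_N\xi_N=\xi_N$ and $P_N\eta_N=0$, I can write
$$
P_N\xi=\xi_N+P_N(\xi-\xi_N),\qquad P_N\eta=P_N(\eta-\eta_N).
$$
Using $\|P_N\|\le1$ together with $\xi_N\to\xi$ and $\eta_N\to\eta$, the terms $P_N(\xi-\xi_N)$ and $P_N(\eta-\eta_N)$ tend to $0$, so $P_N\xi\to\xi$ and $P_N\eta\to0$. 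Adding these gives $P_Nv=P_N\xi+P_N\eta\to\xi=Pv$, and since $v$ was arbitrary we conclude $P_N\to P$ strongly.

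I do not anticipate a genuine obstacle here; the only point requiring care is the bookkeeping of the two subspaces. Both the approximation hypothesis for $L$ and the one for $L^\perp$ are needed---the former controls the component of $v$ that should survive under $P_N$, while the latter forces the complementary component to be annihilated in the limit---and the uniform contraction bound $\|P_N\|\le1$ is exactly what lets the error terms $P_N(\xi-\xi_N)$ and $P_N(\eta-\eta_N)$ be absorbed.
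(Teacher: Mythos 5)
Your proof is correct. The paper itself gives no argument for this lemma---its proof reads simply ``Easy exercise''---so there is nothing to compare against; your argument (forward direction by taking $\xi_N:=P_N\xi$ and $\eta_N:=(1-P_N)\eta$, converse via the orthogonal decomposition $v=Pv+(1-P)v$ together with the uniform bound $\Vert P_N\Vert\le 1$ to absorb the error terms) is the standard verification the authors evidently intended to leave to the reader, and both directions check out.
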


\begin{proof}
Easy exercise.
\end{proof}

As a corollary we see that for any fixed $(X,X')$, the set of $(X,X')$-regular subspaces is open. 

\begin{proposition}\label{prop6.A}
The $(X,X')$-reduction map $\Gr(E)\to \Gr(E_{\X\setminus(X\sqcup X')})$ is continuous on the subset of $(X,X')$-regular subspaces. 
\end{proposition}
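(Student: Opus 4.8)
The plan is to reduce the continuity of the general $(X,X')$-reduction map to continuity of the two elementary reductions, and then to prove continuity of each elementary reduction directly from the formulas in Lemma \ref{lemma6.C}. By Lemma \ref{lemma6.A}, any reduction $(\ccdot)_{X,X'}$ factors as a composition of single-point reductions $(\ccdot)_{\varnothing,\{y\}}$ and $(\ccdot)_{\{x\},\varnothing}$. Moreover, by the remark after Lemma \ref{lemma6.B}, $(X,X')$-regularity of $L$ guarantees that each intermediate subspace produced along the way is regular for the next elementary reduction to be applied; hence along the chain we never leave the open set where the relevant elementary reduction is defined and (as I will show) continuous. Since a composition of continuous maps is continuous, it therefore suffices to establish continuity of the two elementary reductions on their respective open domains of regular subspaces.

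First I would treat the projection-type elementary reduction $L\mapsto L_{\varnothing,\{y\}}$. Recall that $\Gr(E)$ carries the topology pulled back from the weak (equivalently strong) operator topology via $L\mapsto[L]$. So continuity means: if $[L_N]\to[L]$ strongly and all $L_N$, $L$ are $(\varnothing,\{y\})$-regular, then $[(L_N)_{\varnothing,\{y\}}]\to[L_{\varnothing,\{y\}}]$ strongly. Here I invoke the explicit formula \eqref{eq6.D1}: writing $[L]=\begin{bmatrix}a&b\\c&d\end{bmatrix}$ in the block form for $E=E_{\X\setminus\{y\}}\oplus E_{\{y\}}$, the reduced projection is $a+b(1-d)^{-1}c$. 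Since $E_{\{y\}}$ is one-dimensional, $b$, $c$, $d$ are finite-rank (indeed $d$ is a scalar), and strong convergence $[L_N]\to[L]$ forces convergence of each block: $a_N\to a$ strongly, while $b_N\to b$, $c_N\to c$, $d_N\to d$ in norm (finite-rank pieces with entrywise convergence). The regularity hypothesis gives $1-d\ne0$, so $(1-d_N)^{-1}\to(1-d)^{-1}$ as scalars. Combining these, $a_N+b_N(1-d_N)^{-1}c_N\to a+b(1-d)^{-1}c$ strongly, which is exactly the required convergence.

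The second elementary reduction $L\mapsto L_{\{x\},\varnothing}$ is handled identically, now using formula \eqref{eq6.D2}, namely $[L_{\{x\},\varnothing}]=a-bd^{-1}c$ in the block decomposition $E=E_{\X\setminus\{x\}}\oplus E_{\{x\}}$. The regularity condition here is precisely $d\ne0$ (as noted in the proof of Lemma \ref{lemma6.C}), and again $d$ is a nonzero scalar with $d_N\to d$, so $d_N^{-1}\to d^{-1}$ and the same block-convergence argument yields strong convergence of the reduced projections. Chaining the two cases through the factorization from Lemma \ref{lemma6.A}, and using Lemma \ref{lemma6.B} to keep each intermediate subspace regular, completes the proof.

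The main obstacle I anticipate is the bookkeeping of convergence modes: one must be careful that the blocks $b,c,d$ associated to a single coordinate $e_y$ (or $e_x$) converge in norm rather than merely strongly, and that inverting the scalar factor $(1-d)^{-1}$ (or $d^{-1}$) is legitimate and stable only because regularity keeps these scalars bounded away from the forbidden value. The one-dimensionality of the distinguished coordinate is what makes all these finite-rank blocks behave well, so the argument hinges on reducing everything to single points via Lemma \ref{lemma6.A}; attempting to work directly with a general finite $X\sqcup X'$ and a Schur-complement-type formula would obscure exactly this point and require controlling an operator inverse rather than a scalar one.
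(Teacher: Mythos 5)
Your proposal is correct and follows essentially the same route as the paper: decompose the reduction into elementary single-point reductions via Lemma \ref{lemma6.A}, use Lemma \ref{lemma6.B} to keep each intermediate subspace regular, and verify continuity of each elementary reduction from the explicit formulas \eqref{eq6.D1} and \eqref{eq6.D2}, noting that strong convergence of the projections forces norm convergence of the blocks $b$, $c$, $d$ attached to the distinguished coordinate while regularity keeps the scalar $(1-d)$ or $d$ invertible. The only cosmetic difference is that the paper phrases the block convergence as continuity of $[L]\mapsto b$ from the strong topology to the norm topology on column vectors (with $c=b^*$), which is the precise justification behind your parenthetical remark.
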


\begin{proof}
Examine first the case of elementary reduction, when one of the sets $X$, $X'$ is empty and the other is a singleton. Then we may apply explicit formulas \eqref{eq6.D1} and \eqref{eq6.D2}. In these formulas, the number $d$ depends continuously on $[L]$. Next, the column vector $b$ depends continuously on $[L]$, too. More precisely, the map $[L]\mapsto b$ is continuous with respect to the strong topology on the set of projection operators and the norm topology on the column vectors. The same holds for $[L]\mapsto c$, because  $c=b^*$. It follows that the operators on the right-hand side of the formulas depend continuously on $[L]$. 

In the general case we decompose the map $L\mapsto L_{X,X'}$ into a composition of elementary reductions and apply the previous argument. The existence of such a decomposition follows from Lemma \ref{lemma6.A}, and Lemma \ref{lemma6.B} guarantees that the regularity assumption is preserved after each step. 
\end{proof}

\begin{remark}
If $L$ is not $(X,X')$-regular, then the continuity at $L$ may fail. This can be easily seen when $\X$ consists of two points and $\dim L=1$ (note that all arguments above hold for finite sets $\X$ as well). But the finiteness of $\X$ is not essential here; similar counterexamples can also be constructed for infinite $\X$. 
\end{remark}

\subsection{Conditional measures}
For an arbitrary subset $A\subseteq \X$, we denote by $\Om(A)$ the space $\{0,1\}^A$ whose elements are subsets of $A$. We have a natural identification  $\Om=\Om(A)\times\Om(\X\setminus A)$ and a natural projection $\Om\to \Om(\X\setminus A)$. 

By an \emph{elementary cylinder subset} of $\Om$ we mean any subset of the form
$$
C(X,X'):=\{\om\in\Om: \om\supseteq X, \om\cap X'=\varnothing\}, 
$$
where, as above,  $X\subset\X$ and $X'\subset\X$ are finite nonintersecting subsets of $\Om$. The sets $C(X,X')$ form a base of the topology on $\Om$.

The projection $\Om\to \Om(\X\setminus (X\sqcup X'))$, restricted to $C(X,X')$, induces a \emph{bijection} 
\begin{equation}\label{eq6.E}
C(X,X')\to \Om(\X\setminus (X\sqcup X')),
\end{equation}
which amounts to $\om\mapsto \om\setminus X$.

\begin{definition}\label{def6.C}
Let  $M\in\PP(\Om)$ and let $(X,X')$ be such that $M(C(X,X'))\ne0$. Denote by $M|_{C(X,X')}$ the restriction of $M$ to $C(X,X')$.
The \emph{conditional measure} $M_{X,X'}$ is defined as the pushforward of the measure $\frac1{M(C(X,X'))}M|_{C(X,X')}$ under  the bijection \eqref{eq6.E}. 
\end{definition} 

Since $\frac1{M(C(X,X'))}M|_{C(X,X')}$ is a probability measure, so is $M_{X,X'}$. 

\begin{remark}
Let us emphasize that  $M_{X,X'}$ lives on $\Om(\X\setminus (X\sqcup X'))$, not $\Om$. Note that Lyons \cite[\S6]{Lyons} treats conditional measures as measures on $\Om$,  of the form $\frac1{M(C(X,X'))}M|_{C(X,X')}$. For our purposes this definition is not convenient, because it does no permit to compare two measures, as in the proposition below.
\end{remark}

\begin{proposition}\label{prop6.B}
Let $M\in\PP(\Om)$ be such that  for any elementary cylinder $C(X,X')$, one has $M(C(X,X'))\ne0$, so that the conditional measure $M_{X,X'}$ is always defined. 

Then $M$ is $\SS$-quasi-invariant if and only if two conditional measures $M_{X,X'}$ and $M_{Y,Y'}$ are equivalent whenever $X\sqcup X'= Y\sqcup Y'$ and $|X|=|Y|$. 
\end{proposition}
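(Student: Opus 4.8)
The plan is to reduce the whole equivalence to the case of a single transposition and then to a purely combinatorial statement about conditioning. First I would observe that the set $\{g\in\SS:\ M\sim{}^g\!M\}$ is a subgroup of $\SS$: it is closed under products and inverses because applying a fixed automorphism to a measure preserves its equivalence class, together with transitivity of equivalence. Since $\SS$ is generated by the transpositions $s_{x,y}$, it follows that $M$ is $\SS$-quasi-invariant if and only if $M\sim{}^{s_{x,y}}\!M$ for every pair $x\ne y$. Both implications of the proposition will therefore be tested at the level of transpositions.

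The technical heart is the following observation. Fix a finite set $S\subset\X$, two points $x,y\in S$, and a subset $Z\subseteq S$ with $x\in Z$, $y\notin Z$; put $Z':=(Z\setminus\{x\})\cup\{y\}$. The transposition $s_{x,y}$ carries the elementary cylinder $C(Z,S\setminus Z)$ bijectively onto $C(Z',S\setminus Z')$, and, since it fixes every point of $\X\setminus S$, it becomes the \emph{identity map} of $\Om(\X\setminus S)$ under the two identifications \eqref{eq6.E}. Consequently, under \eqref{eq6.E} the normalized restriction of $M$ to $C(Z,S\setminus Z)$ corresponds (up to a positive scalar) to $M_{Z,S\setminus Z}$; the normalized restriction of ${}^{s_{x,y}}\!M$ to $C(Z',S\setminus Z')$, being the $s_{x,y}$-pushforward of $M|_{C(Z,S\setminus Z)}$, corresponds to the \emph{same} measure $M_{Z,S\setminus Z}$; and the normalized restriction of $M$ to $C(Z',S\setminus Z')$ corresponds to $M_{Z',S\setminus Z'}$. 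Since restricting equivalent measures to a fixed set preserves equivalence, $M\sim{}^{s_{x,y}}\!M$ forces $M_{Z',S\setminus Z'}\sim M_{Z,S\setminus Z}$. I expect this bookkeeping step—identifying $s_{x,y}$ with the identity of $\Om(\X\setminus S)$ and matching each of the three normalized restrictions with the correct conditional measure—to be the main obstacle; the remaining steps are routine.

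Granting this, the forward implication runs as follows. Suppose $M$ is $\SS$-quasi-invariant and let $X\sqcup X'=Y\sqcup Y'=:S$ with $|X|=|Y|$. Then $X$ and $Y$ are subsets of $S$ of equal cardinality, so $X$ can be joined to $Y$ by a chain $X=W_0,W_1,\dots,W_k=Y$ of subsets of $S$ in which each $W_{i+1}$ is obtained from $W_i$ by deleting one element and adjoining one element of $S\setminus W_i$. Each such elementary swap is implemented by a transposition exactly as in the previous paragraph, which gives $M_{W_i,S\setminus W_i}\sim M_{W_{i+1},S\setminus W_{i+1}}$, and transitivity of equivalence then yields $M_{X,X'}\sim M_{Y,Y'}$.

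For the converse, I would apply the hypothesis to $X=\{x\}$, $X'=\{y\}$, $Y=\{y\}$, $Y'=\{x\}$ (here $X\sqcup X'=\{x,y\}=Y\sqcup Y'$ and $|X|=|Y|=1$), obtaining $M_{\{x\},\{y\}}\sim M_{\{y\},\{x\}}$ for all $x\ne y$. Using the partition $\Om=\Om^+_{x,y}\sqcup\Om^-_{x,y}$ from Remark \ref{rem4.C}: the transposition $s_{x,y}$ fixes $\Om^+_{x,y}$ pointwise, so $M$ and ${}^{s_{x,y}}\!M$ agree there, while on $\Om^-_{x,y}=C(\{x\},\{y\})\sqcup C(\{y\},\{x\})$ the identification of the second paragraph shows that $M|_{\Om^-_{x,y}}\sim({}^{s_{x,y}}\!M)|_{\Om^-_{x,y}}$ is equivalent to the single condition $M_{\{x\},\{y\}}\sim M_{\{y\},\{x\}}$. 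Hence $M\sim{}^{s_{x,y}}\!M$ for every transposition, and by the first paragraph $M$ is $\SS$-quasi-invariant, completing the proof.
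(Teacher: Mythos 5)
Your proof is correct, and its engine is the same as the paper's: the observation that a permutation acting trivially outside the conditioning set $S$ becomes, under the two identifications \eqref{eq6.E}, the identity map of $\Om(\X\setminus S)$, so that equivalence of (normalized) restrictions of $M$ and ${}^g\!M$ to matching cylinders translates exactly into equivalence of conditional measures. Where you genuinely diverge is in the organization: you first reduce quasi-invariance to transpositions via the subgroup $\{g\in\SS: M\sim{}^g\!M\}$ and the fact that transpositions generate $\SS$, whereas the paper never makes this reduction. In the forward direction the paper is more economical --- given $X\sqcup X'=Y\sqcup Y'$ with $|X|=|Y|$ it picks a \emph{single} permutation $g$ with $g(X)=Y$, $g(X')=Y'$, trivial outside the common set, and applies the bookkeeping once, while you need a chain of elementary swaps $W_0,\dots,W_k$ and transitivity. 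In the converse direction your reduction pays off: the paper, for an arbitrary $g$ supported in a finite set $A$, partitions $\Om$ into all cylinders $C(X,X')$ with $X\sqcup X'=A$ and invokes the hypothesis for every pair $(X,X')$, $(g(X),g(X'))$, whereas you need the hypothesis only for the singleton pairs $(\{x\},\{y\})$ and $(\{y\},\{x\})$ (together with the observation that $s_{x,y}$ fixes $\Om^+_{x,y}$ pointwise, so the two measures actually \emph{coincide} there). Thus your argument shows, slightly more than is stated, that $\SS$-quasi-invariance already follows from equivalence of the conditional measures $M_{\{x\},\{y\}}$ and $M_{\{y\},\{x\}}$ for all $x\ne y$.
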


\begin{proof}
Suppose that $M$ is $\SS$-quasi-invariant, and let $C(X,X')$ and $C(Y,Y')$ be two elementary cylinders such that $X\sqcup X'= Y\sqcup Y'=:A$ and $|X|=|Y|$. Then there exists a permutation $g:\X\to\X$ such that $g(X)=Y$, $g(X')=Y'$ and the action of $g$ outside $A$ is trivial. Then the induced action on $\Om$ defines a bijection $C(X,X')\to C(Y,Y')$, which reduces to the identity map on $\Om(\X\setminus A)$ after the identification
$$
C(X,X')\to\Om(\X\setminus A)\leftarrow C(Y,Y')
$$
coming from \eqref{eq6.E}. It follows that $M_{X,X'}$ and $M_{Y,Y'}$ are equivalent. 

Conversely,  suppose that any such two conditional measures are equivalent. Given an arbitrary element $g\in\SS$, there exists a finite subset $A\subset\X$ such that $g$ acts trivially on $ \X\setminus A$. Because $g$ preserves the partition
$$
\Om=\bigsqcup_{(X,X'): \;X\sqcup X'=A} C(X,X'),
$$
the above argument shows that $M$ is quasi-invariant with respect to the action of $g$. 

This completes the proof.
\end{proof}

Now we will connect conditional measures with the results of the previous subsection. Let again $L\in\Gr(E)$ and $[L]$ stand for the projection operator with the range $L$. Recall the notation $M^{[L]}$ for the corresponding determinantal measure. The next claim is analogous to that of Lyons \cite[(6.5)]{Lyons}.

\begin{proposition}\label{prop6.C}
Let $(X,X')$ be such that $L$ is $(X,X')$-regular. Then the conditional measure $M^{[L]}_{X,X'}:=(M^{[L]})_{X,X'}$ is well defined and it coincides with  $M^{[L_{X,X'}]}$.
\end{proposition}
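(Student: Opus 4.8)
The plan is to compare the correlation functions of the two measures $M^{[L]}_{X,X'}$ and $M^{[L_{X,X'}]}$, both of which live on $\Om(\X\setminus(X\sqcup X'))$; since a measure in $\PP(\Om(\X\setminus(X\sqcup X')))$ is uniquely determined by its correlation functions, matching them settles the proposition. First I would reduce the general claim to the two \emph{elementary} cases in which one of the sets $X,X'$ is empty and the other is a single point. On the subspace side this decomposition is furnished by Lemma \ref{lemma6.A}, and Lemma \ref{lemma6.B} guarantees that $(X,X')$-regularity is inherited at each elementary step, so that every intermediate subspace is again regular and the explicit formulas of Lemma \ref{lemma6.C} apply. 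On the measure side I would record the parallel ``tower'' identities for conditional measures,
$$
M_{X,X'}=(M_{\varnothing,X'})_{X,\varnothing},\qquad M_{\varnothing,X'_1\sqcup X'_2}=(M_{\varnothing,X'_1})_{\varnothing,X'_2},\qquad M_{X_1\sqcup X_2,\varnothing}=(M_{X_1,\varnothing})_{X_2,\varnothing},
$$
which follow straight from Definition \ref{def6.C} by conditioning in stages and composing the cylinder bijections \eqref{eq6.E}. These mirror exactly the three relations \eqref{eq6.C1}--\eqref{eq6.C3} on the subspace side.

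Before the reduction I would dispose of well-definedness, i.e. verify $M^{[L]}(C(X,X'))\neq0$. Conditioning first on the holes $X'$ and then on the particles $X$, it suffices to check that each elementary gap/occupation probability is positive. For $K=[L]$ the probability of particles on a finite set $P$ is $\det[K]_P$, the Gram determinant of $\{[L]e_x:x\in P\}$, which is nonzero exactly when $E_P\cap L^\perp=\{0\}$; dually, the probability of holes on $H$ is $\det[I-K]_H$, nonzero exactly when $E_H\cap L=\{0\}$. Thus $(X,X')$-regularity (Definition \ref{def6.B}) is precisely what legitimizes the first conditioning, and Lemma \ref{lemma6.B} keeps the reduced subspace regular, so the subsequent conditionings are legitimate too. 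In this way well-definedness propagates along the tower simultaneously with the inductive proof below.

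It then remains to treat the two elementary cases. Take first $X=\varnothing$, $X'=\{y\}$ (a hole at $y$). For a finite $\sigma\subset\X\setminus\{y\}$ the correlation function of $M^{[L]}_{\varnothing,\{y\}}$ at $\sigma$ is the conditional probability $\Pr(\om\supseteq\sigma,\ y\notin\om)/\Pr(y\notin\om)$, whose numerator $\det[K]_\sigma-\det[K]_{\sigma\cup\{y\}}$ and denominator $1-K(y,y)$ are both built from $K=[L]$. A Schur-complement manipulation rewrites this ratio as $\det\bigl[(a+b(1-d)^{-1}c)(u,v)\bigr]_{u,v\in\sigma}$, which by Lemma \ref{lemma6.C}(i) equals $\det\bigl[[L_{\varnothing,\{y\}}](u,v)\bigr]_{u,v\in\sigma}$, the $\sigma$-correlation function of $M^{[L_{\varnothing,\{y\}}]}$. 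The particle case $X=\{x\}$, $X'=\varnothing$ is parallel and even cleaner: the conditional correlation function is $\det[K]_{\sigma\cup\{x\}}/K(x,x)$, and the classical Schur-complement determinant identity turns it directly into $\det\bigl[(a-bd^{-1}c)(u,v)\bigr]_{u,v\in\sigma}=\det\bigl[[L_{\{x\},\varnothing}](u,v)\bigr]$ by Lemma \ref{lemma6.C}(ii). These two computations are close to Lyons \cite[(6.5)]{Lyons}; the only care needed is to respect the convention for the $X'=\varnothing$ reduction that differs from Lyons', as flagged in Remark \ref{rem6.A}.

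The main obstacle is the elementary-case determinantal identity — expressing the conditional correlation function (which, in the hole case, carries an \emph{avoidance} constraint handled by inclusion--exclusion in $y$) as a ratio of minors of $K$ and then identifying that ratio with the minor of the Schur complement $a\pm b(\,\cdot\,)c$, using crucially that $a+b(1-d)^{-1}c$ and $a-bd^{-1}c$ are again honest projections $[L_{\varnothing,\{y\}}]$ and $[L_{\{x\},\varnothing}]$ (which is exactly the content of Lemma \ref{lemma6.C}). Everything else — the two towers of reductions, the propagation of regularity and positivity along them, and the assembly of the elementary cases into the general one — is bookkeeping forced by Lemmas \ref{lemma6.A}--\ref{lemma6.C}.
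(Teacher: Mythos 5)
Your proposal is correct and follows essentially the same route as the paper: reduce to the two elementary reductions $(\{x\},\varnothing)$ and $(\varnothing,\{y\})$ via the tower identities for conditional measures together with Lemmas \ref{lemma6.A} and \ref{lemma6.B}, then identify the conditional correlation functions with minors of the Schur complements, which Lemma \ref{lemma6.C} recognizes as the kernels $[L_{\{x\},\varnothing}]$ and $[L_{\varnothing,\{y\}}]$. The only cosmetic difference is in the hole case, where you use inclusion--exclusion plus a direct Schur-complement manipulation, while the paper substitutes the complementary kernel (the row at $y$ replaced by $\delta_{z,y}-K(y,z)$) into the particle-case formula \eqref{eq6.F} and applies \eqref{eq6.D1}; both computations yield the same identity.
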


\begin{proof}
Consider arbitrary splittings $X=X_1\sqcup X_2$ and $X'=X'_1\sqcup X'_2$. For an arbitrary measure $M\in\PP(\Om)$ we have
$$
M_{X,X'}=(M_{X_1,X'_1})_{X_2,X'_2}
$$
with the understanding that the left-hand side is defined precisely when so is the right-hand side. Combining this observation 
with Lemmas \ref{lemma6.A} and \ref{lemma6.B} we reduce the problem to the simplest case when $(X,X')$ has the form  $(\varnothing,\{y\})$ or $(\{x\},\varnothing)$ for some points $x,y\in\X$. We are going to show that then the desired result follows from  Lemma \ref{lemma6.C}. 

Indeed, examine the case of $(X,X')=(\{x\},\varnothing)$. Let us abbreviate $M:=M^{[L]}$ and let $K(\ccdot,\ccdot)$ denote the matrix of the projection $K:=[L]$. The regularity condition means that $K(x,x)>0$, which in turn means that $M(C(\{x\},\varnothing))>0$, so that the conditional measure $M_{\{x\},\varnothing}$ is defined. By the very definition, its correlation functions have the form
\begin{equation}\label{eq6.F}
\rho_n(x_1,\dots,x_n)=\frac{\det[K(x_i,x_j)]_{i,j=0}^n}{K(x_0,x_0)}, \qquad x_0:=x; \quad x_1,\dots,x_n\in\X\setminus\{x\}.
\end{equation}
Next, the ratio on right-hand side can be written as the $n\times n$ determinant $\det[\wt K(x_i,x_j)]_{i,j=1}^n$, where
$$
\wt  K(y,z):=K(y,z)-K(y,x_0)(K(x_0,x_0))^{-1}K(x_0,z), \qquad y,z\in\X\setminus \{x_0\}.
$$
Comparing this with \eqref{eq6.D2} we see that $\wt K(\ccdot,\ccdot)$ serves as a correlation kernel for $M_{\{x\},\varnothing}$, which is just what is needed (cf. Shirai--Takahashi \cite[theorem 6.5]{ST-1}). 

In the case of $(X,X')=(\varnothing,\{y\})$ the argument is similar, with a slight modification. Now we have to take, in formula \eqref{eq6.F}, the kernel 
$$
K'(y,z):=\begin{cases} 
K(y,z), & y\in\X\setminus\{x_0\}, \; z\in\X,\\
\de_{z, x_0}- K(x_0,z), & y=x_0, \; z\in\X,
\end{cases}
$$
and then apply \eqref{eq6.D1}.
\end{proof}

\section{Multiplicative functionals and conditioning}\label{sect7}

Throughout this section $\X$ is a countable set with no additional structure.

\subsection{Multiplicative functionals and their regularization}

The material of this subsection relies on Bufetov's paper \cite{Buf-2018}. We use in fact only a small part of his results, in the form suitable for our purposes, as presented in \cite{BufOls}.  

Let $\al(x)$ be a function on $\X$ such that the difference $\al(\ccdot)-1$ is in $\ell^1(\X)$, that is, $\sum_{x\in\X}|\al(x)-1|<\infty$. Then the product
$$
\Psi_\al(\om):=\prod_{x\in\om} \al(x), \qquad \om\in\Om,
$$
converges and hence defines a function on $\Om$. We call it the \emph{multiplicative functional} corresponding to $\al(\ccdot)$. In particular, $\Psi_\al$ is defined for any function $\al(x)$ such that $\al(x)-1$ is finitely supported. 

Fix a measure $M\in\PP(\Om)$ and write $\EE_M$ for the expectation with respect to $M$. If $\al(x)$ is a strictly positive function on $\X$ such that $\al(x)-1$ is finitely supported, then $\Psi_\al$ is a strictly positive cylinder function on $\Om$ and hence $\EE_M(\Psi_\al)>0$. Then we set
$$
\PPsi_{\al,M}:=\frac{\Psi_\al}{\EE_M(\Psi_\al)}.
$$

It turns out that using a limit procedure, one can extend this definition to a broader class of functions $\al(x)$.  Given a function $\al(x)$ and a finite subset $X\subset\X$, define the corresponding \emph{truncated} function by
$$
\al_X(x):=\begin{cases} \al(x), & x\in X, \\
 1, & x\in\X\setminus X. \end{cases}
$$ 

Let $\{ X\}$ denote the directed set whose elements are arbitrary finite subsets $X\subset\X$ and the partial order is defined by inclusion: $X<Y$ if $X\subset Y$. In the next result we use a limit transition along $\{X\}$. 

\begin{proposition}\label{prop7.A}
Let $\al(x)$ be a strictly positive function on $\X$ such that $\al(\ccdot)-1\in\ell^2(\X)$ and let $M\in\PP(\Om)$.  In the Banach space $L^1(\Om,M)$, there exists a limit in the norm topology
$$
\PPsi_{\al,M}:=\lim_{\{ X\}}\PPsi _{\al_X,M}.
$$
\end{proposition}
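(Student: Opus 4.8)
The plan is to exploit the completeness of the Banach space $L^1(\Om,M)$: it suffices to prove that the net $\{\PPsi_{\al_X,M}\}_{\{X\}}$ is Cauchy. The starting point is a square-root device. Put $\ga:=\sqrt\al$; since $\al$ is strictly positive and $\al-1\in\ell^2(\X)$, one has $\ga-1=\sqrt{1+(\al-1)}-1\in\ell^2(\X)$ as well. For a finite $X\subset\X$ set
\[
\psi_X:=\frac{\Psi_{\ga_X}}{\|\Psi_{\ga_X}\|_{L^2(\Om,M)}},
\]
a unit vector in $L^2(\Om,M)$. Because $\Psi_{\ga_X}^2=\Psi_{\al_X}$ and $\|\Psi_{\ga_X}\|_{L^2}^2=\EE_M(\Psi_{\al_X})$, we have $\psi_X^2=\PPsi_{\al_X,M}$. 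Now if $\psi_X\to\psi$ in $L^2$, then $\psi_X^2\to\psi^2$ in $L^1$, because
\[
\|\psi_X^2-\psi^2\|_{L^1}\le\|\psi_X-\psi\|_{L^2}\,\|\psi_X+\psi\|_{L^2}
\]
and the second factor stays bounded by $2$. Thus the whole problem reduces to proving that $\{\psi_X\}$ converges in $L^2(\Om,M)$, and since these are unit vectors it is enough to show that $(\psi_X,\psi_Y)\to1$ along the net.

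First I would compute the relevant inner products for nested sets. Fix $X\subseteq Y$ and put $W:=Y\setminus X$. Since the supports $X$ and $W$ are disjoint, $\Psi_{\ga_X}\Psi_{\ga_Y}=\Psi_{\al_X}\Psi_{\ga_W}$ and $\Psi_{\al_Y}=\Psi_{\al_X}\Psi_{\al_W}$. Introducing the tilted probability measure $M_X$ with density $\PPsi_{\al_X,M}$ relative to $M$, a direct computation gives
\[
(\psi_X,\psi_Y)=\frac{\EE_{M_X}(\Psi_{\ga_W})}{\sqrt{\EE_{M_X}(\Psi_{\ga_W}^2)}},
\]
the cosine of the angle between $\Psi_{\ga_W}>0$ and the constant $\one$ in $L^2(\Om,M_X)$. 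Consequently
\[
0\le 1-(\psi_X,\psi_Y)\le 1-(\psi_X,\psi_Y)^2=\frac{\operatorname{Var}_{M_X}(\Psi_{\ga_W})}{\EE_{M_X}(\Psi_{\ga_W}^2)},
\]
so that $\|\psi_X-\psi_Y\|_{L^2}^2=2\bigl(1-(\psi_X,\psi_Y)\bigr)$ is controlled by the coefficient of variation of $\Psi_{\ga_W}$ under $M_X$. An arbitrary pair $X,Y$ is reduced to this nested situation by passing through $X\cup Y$ and using the triangle inequality, the tail set $W$ then lying inside $\X\setminus X_0$ for any prescribed threshold $X_0$.

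The heart of the matter is therefore the estimate
\[
\frac{\operatorname{Var}_{M_X}(\Psi_{\ga_W})}{\EE_{M_X}(\Psi_{\ga_W}^2)}\longrightarrow0
\]
as $X$ exhausts $\X$, where $W=Y\setminus X$ carries only the tail of the series, so that $\sum_{x\in W}(\ga(x)-1)^2\to0$. Writing $\Psi_{\ga_W}=\prod_{x\in W}\bigl(1+(\ga(x)-1)\om(x)\bigr)$ and expanding the second moment, the diagonal contribution is governed by $\sum_{x\in W}(\ga(x)-1)^2$, while the off-diagonal terms involve the truncated two-point correlations of $M_X$. This is the step where the hypothesis $\ga-1\in\ell^2$ is used in an essential way, and it is also where genuine input about the measure is needed: converting $\ell^2$-smallness of $\ga-1$ on $W$ into smallness of the variance is \emph{not} automatic, but requires uniform control of the pair correlations of the tilted measures $M_X$. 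I expect this uniform second-moment bound to be the main obstacle; the mechanism for supplying it is the multiplicative-functional estimate of Bufetov \cite{Buf-2018}, in the form recorded in \cite{BufOls}. Once it is in hand, the inner products $(\psi_X,\psi_Y)$ tend to $1$, the net $\{\psi_X\}$ is Cauchy in $L^2(\Om,M)$, and the square-root device delivers the desired norm limit $\PPsi_{\al,M}=\lim_{\{X\}}\PPsi_{\al_X,M}$ in $L^1(\Om,M)$.
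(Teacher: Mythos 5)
Your reduction is correct as far as it goes, and it is in fact the skeleton of the arguments in the literature: the square-root device, the passage to unit vectors $\psi_X\in L^2(\Om,M)$, the identity $(\psi_X,\psi_Y)=\EE_{M_X}(\Psi_{\ga_W})\big/\sqrt{\EE_{M_X}(\Psi_{\ga_W}^2)}$ for nested $X\subseteq Y$, and the bound $\tfrac12\Vert\psi_X-\psi_Y\Vert_{L^2}^2\le\operatorname{Var}_{M_X}(\Psi_{\ga_W})/\EE_{M_X}(\Psi_{\ga_W}^2)$ are all fine. The problem is the step you defer to the references: it is not a quotable technical lemma but the entire content of the proposition, and --- more seriously --- it cannot be supplied at the level of generality at which you are working, because for an arbitrary $M\in\PP(\Om)$ the statement is \emph{false}. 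Take $\X=\Z_{>0}$, $\al(x)=1+x^{-3/4}$ (so $\al-1\in\ell^2\setminus\ell^1$), let $\om_1$ and $\om_2$ be the configurations of all odd, respectively all even, numbers, and put $M=\tfrac12(\de_{\om_1}+\de_{\om_2})$. Then $\PPsi_{\al_X,M}(\om_1)=2R_X/(R_X+1)$ and $\PPsi_{\al_X,M}(\om_2)=2/(R_X+1)$, where $R_X=\prod_{x\in\om_1\cap X}\al(x)\big/\prod_{x\in\om_2\cap X}\al(x)$. Since $\sum_{x\in\om_i}(\al(x)-1)=\infty$ for both $i$, every finite $X_0$ admits supersets $X$ with $R_X$ arbitrarily large and other supersets $X'$ with $R_{X'}$ arbitrarily small; the corresponding functions $\PPsi_{\al_X,M}$ and $\PPsi_{\al_{X'},M}$ are then at $L^1$-distance close to $2$, so the net is not Cauchy and has no limit. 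In particular, no uniform control of ``pair correlations of the tilted measures $M_X$'' can exist in this generality, and no appeal to \cite{Buf-2018} can close your gap.

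What rescues the proposition is a hypothesis suppressed in its statement but present in the source the paper cites --- the paper's entire proof is the reference to \cite[Proposition 5.5]{BufOls} --- and in every application made of it here (in Sections \ref{sect7} and \ref{sect8} the measure is always of the form $M^{[L]}$): $M$ must be determinantal with a Hermitian, in the applications projection, correlation kernel. For such $M$ the truncated pair correlation is $\rho_2(x,y)-\rho_1(x)\rho_1(y)=-|K(x,y)|^2\le0$, and expectations of multiplicative functionals are (regularized) Fredholm determinants; this negative-association/determinantal structure is exactly what converts $\sum_{x\in W}(\ga(x)-1)^2\to0$ into the variance estimate you need, uniformly over the tilted measures $M_X$, which remain determinantal by Proposition \ref{prop7.B}. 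So your instinct that ``genuine input about the measure is needed'' is right, but that input is the determinantal hypothesis itself, not a general-purpose bound; once you restore it and import Bufetov's estimate, your outline essentially reproduces the proof of the cited result rather than furnishing an independent proof of the proposition as stated.
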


\begin{proof}
See \cite[Proposition 5.5]{BufOls}.
\end{proof}

We call $\PPsi_{\al,M}$ the \emph{normalized multiplicative functional} corresponding to $\al(\ccdot)$. Being an element of $L^1(\Om,M)$, it may be viewed as a function on $\Om$ defined modulo an $M$-null set. 

We keep to the notation introduced in Section \ref{sect6}. 
If $\mathcal L\in\Gr(E)$ and $a(x)$ is a function on $\X$ which does not vanish and tends to $1$ at infinity, then $a\mathcal L:=\{af: f\in \mathcal L\}$ is a closed subspace and hence an element of $\Gr(E)$.

\begin{proposition}\label{prop7.B}
Let $\mathcal L\in\Gr(E)$ and $a(x)$ be a nonvanishing function on $\X$ such that $a(\ccdot)-1\in\ell^2(\X)$. Then
$$
M^{[a\mathcal L]}=\PPsi_{|a|^2, \,M^{[\mathcal L]}}\cdot M^{[\mathcal L]}.
$$
\end{proposition}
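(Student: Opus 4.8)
The plan is to identify both measures through their multiplicative-functional generating functions and to reduce the whole statement to an algebraic identity between Fredholm determinants. Write $P:=[\mathcal L]$, $Q:=[a\mathcal L]$, $\mu:=M^{[\mathcal L]}$, $b:=|a|^2$, and let $D_g$ denote the operator of multiplication by a function $g$ on $E=\ell^2(\X)$. I will use the standard generating-function formula for a determinantal measure with a locally trace-class kernel $K$: for every $\psi$ with $\psi-1$ finitely supported, $\EE_{M^K}[\Psi_\psi]=\det(1+D_{\psi-1}K)$ (see e.g. \cite{Soshnikov}, \cite{ST-1}). Since a probability measure on $\Om$ is determined by the numbers $\EE[\Psi_\psi]$ over all such $\psi$ (they recover every cylinder probability by inclusion--exclusion), it suffices to prove that $\EE_{M^{[a\mathcal L]}}[\Psi_\psi]=\EE_{\PPsi_{b,\mu}\cdot\mu}[\Psi_\psi]$ for all such $\psi$. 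Because $\Psi_\psi\Psi_b=\Psi_{\psi b}$, the right-hand side equals $\EE_\mu[\Psi_{\psi b}]/\EE_\mu[\Psi_b]=\det(1+D_{\psi b-1}P)/\det(1+D_{b-1}P)$, so the proposition reduces to the operator identity
$$
\det(1+D_{\psi-1}Q)\cdot\det(1+D_{b-1}P)=\det(1+D_{\psi b-1}P).\qquad(\star)
$$

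First I would dispose of the reduction to the case where $a-1$ is finitely supported. Observe that $(|a|^2)_X=|a_X|^2$, so Proposition \ref{prop7.A}, applied to $b=|a|^2$ (which lies in $1+\ell^2(\X)$ because $a$ is bounded and $a-1\in\ell^2(\X)$), gives $\PPsi_{|a_X|^2,\mu}\cdot\mu\to\PPsi_{b,\mu}\cdot\mu$ in total variation. On the other hand $D_{a_X}-D_a=D_{(1-a)\mathbf 1_{\X\setminus X}}\to0$ in operator norm (since $a\to1$), and $D_a$ is invertible (as $a$ is nonvanishing with $\inf|a|>0$); hence the projections $[a_X\mathcal L]\to[a\mathcal L]$ in the strong operator topology, and in particular their matrix entries converge. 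Since the mass $M^K(C(Y,Y'))$ of any elementary cylinder is a fixed polynomial in the finitely many entries $K(u,v)$, $u,v\in Y\sqcup Y'$, the finitely supported case (giving $\PPsi_{|a_X|^2,\mu}\cdot\mu=M^{[a_X\mathcal L]}$) forces the cylinder masses of $M^{[a_X\mathcal L]}$ to converge to those of $M^{[a\mathcal L]}$; comparing limits yields $\PPsi_{b,\mu}\cdot\mu=M^{[a\mathcal L]}$. Thus it remains to prove $(\star)$ when $a-1$ is finitely supported, so that $D_{\psi-1}$, $D_{b-1}$, etc.\ are finite rank and every determinant below is an honest finite determinant.

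To establish $(\star)$ I would compute its two sides separately and match them against the common quantity $\det_{\mathcal L}(PD_cP|_{\mathcal L})$, the compression of $D_c$ to $\mathcal L$. The elementary lemma is that for any $c$ with $c-1$ finitely supported,
$$
\det(1+D_{c-1}P)=\det\nolimits_{\mathcal L}(PD_cP|_{\mathcal L});\qquad(\ddagger)
$$
indeed $\det(1+D_{c-1}P)=\det(1+PD_{c-1}P)$ by $\det(1+XY)=\det(1+YX)$ (writing $D_{c-1}P=(D_{c-1}P)P$), and $PD_{c-1}P$ annihilates $\mathcal L^\perp$ and has range in $\mathcal L$, so its Fredholm determinant is $\det_{\mathcal L}(1_{\mathcal L}+PD_{c-1}P|_{\mathcal L})=\det_{\mathcal L}(PD_cP|_{\mathcal L})$. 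Applying $(\ddagger)$ with $c=\psi b$ and $c=b$ rewrites the right-hand side of $(\star)$, divided by $\det(1+D_{b-1}P)$, as $\det_{\mathcal L}(PD_{\psi b}P|_{\mathcal L})/\det_{\mathcal L}(PD_bP|_{\mathcal L})$. For the left-hand side I would invoke the formula for the orthogonal projection onto the range of $D_aP$, namely $Q=D_aP(PD_bP)^{+}PD_{\bar a}$, where $(PD_bP)^{+}$ is the inverse of $PD_bP$ on $\mathcal L$ (it exists because $b=|a|^2$ is bounded away from $0$). Then cyclic invariance and $D_{\bar a}D_{\psi-1}D_a=D_{(\psi-1)b}$ give, with $\Pi(c):=PD_cP|_{\mathcal L}$,
$$
\det(1+D_{\psi-1}Q)=\det\nolimits_{\mathcal L}\bigl(1_{\mathcal L}+\Pi((\psi-1)b)\,\Pi(b)^{-1}\bigr)=\det\nolimits_{\mathcal L}\Pi(\psi b)\big/\det\nolimits_{\mathcal L}\Pi(b),
$$
using linearity of $\Pi$ so that $\Pi(b)+\Pi((\psi-1)b)=\Pi(\psi b)$. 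This coincides with the expression obtained for the right-hand side, proving $(\star)$ and hence the proposition.

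The main obstacle, and the only genuinely non-formal step, is the computation of the left-hand side of $(\star)$: one must handle the orthogonal projection $Q$ onto the rotated subspace $a\mathcal L$ through the generalized inverse $(PD_bP)^{+}$ and the compression to $\mathcal L$. What makes this work is precisely the standing hypotheses that $a$ is nonvanishing with $a\to1$, which force $b=|a|^2$ to be bounded both above and below; this guarantees that $D_a$ is invertible, that $a\mathcal L$ is closed, and that $PD_bP$ is invertible on $\mathcal L$, so that all generalized inverses are genuine inverses and the determinant manipulations are legitimate. This identity is essentially Bufetov's description of how the multiplicative functional $\Psi_{|a|^2}$ acts on the Grassmannian by $\mathcal L\mapsto a\mathcal L$, cf.\ \cite{Buf-2018}, \cite{BufOls}; the argument above reproduces it in the form needed here.
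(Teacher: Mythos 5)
Your argument is correct, but note that the paper does not actually prove this proposition in-house: its ``proof'' is the citation \cite[Theorem 6.4]{BufOls}, so what you wrote is a genuine self-contained replacement rather than a variant of an argument in the text. Your two-stage structure is sound. In the finitely supported case, matching the generating functionals $\EE[\Psi_\psi]$ (which do determine a measure on $\Om$, since they encode all correlation functions and hence, by inclusion--exclusion, all cylinder masses) reduces the statement to your identity $(\star)$, and your proof of $(\star)$ checks out: the compression identity $(\ddagger)$ is a correct use of $\det(1+XY)=\det(1+YX)$; your formula $Q=D_aP(PD_bP)^{+}PD_{\bar a}$ is indeed the orthogonal projection onto $a\mathcal L$ (self-adjoint, idempotent, fixes $a\mathcal L$, range in $a\mathcal L$ --- and the standing hypotheses give $\inf|a|>0$, hence $D_a$ invertible, $a\mathcal L$ closed, and $PD_bP|_{\mathcal L}\ge\inf b>0$, which is exactly what legitimizes the inverse); and the cyclic rearrangement plus linearity of $\Pi$ yields $\det(1+D_{\psi-1}Q)=\det_{\mathcal L}\Pi(\psi b)/\det_{\mathcal L}\Pi(b)$, matching the right-hand side. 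The limiting step is also legitimate: Proposition \ref{prop7.A} gives total-variation convergence of $\PPsi_{|a_X|^2,\mu}\cdot\mu$ to $\PPsi_{|a|^2,\mu}\cdot\mu$ (in your notation), while $\Vert D_{a_X}-D_a\Vert\to0$ gives convergence of the projections $[a_X\mathcal L]\to[a\mathcal L]$, hence of the cylinder masses on the other side, and elementary cylinders form a $\pi$-system generating the Borel $\sigma$-algebra. Two points deserve an explicit line each: (a) the convergence $[a_X\mathcal L]\to[a\mathcal L]$ is most cleanly justified by your own formula for $Q$ (all three factors converge in operator norm, the middle one because $PD_{|a_X|^2}P|_{\mathcal L}$ is bounded below uniformly in $X$), or alternatively via Lemma \ref{lemma6.D}, as the paper does in the proof of Theorem \ref{thm8.A}; (b) in the determinant manipulations one should note that $\Pi(b)$, $\Pi(\psi b)$ and $\Pi(b)^{-1}$ are finite-rank perturbations of $1_{\mathcal L}$, so the determinants and the multiplicativity $\det(AB)=\det A\,\det B$ being invoked are the elementary ones. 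What your route buys is self-containedness in exactly the discrete, $\ell^2$-perturbation setting the paper needs; the result cited from \cite{BufOls} is more general but rests on considerably heavier machinery.
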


Note that the function $|a(\ccdot)|^2-1$ belongs to $\ell^2(\X)$ together with $a(\ccdot)-1$. Consequently, the functional $\PPsi_{|a|^2,\, M^{[\mathcal L]}}$ is well defined by virtue of Proposition \ref{prop7.A}.

\begin{proof}
See \cite[Theorem 6.4]{BufOls}.
\end{proof}

\begin{corollary}\label{cor7.A}
Under the hypothesis of Proposition \ref{prop7.B}, the measures $M^{[\mathcal L]}$ and $M^{[a\mathcal L]}$ are equivalent.
\end{corollary}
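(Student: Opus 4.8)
The plan is to derive the asserted equivalence directly from Proposition \ref{prop7.B}, exploiting the fact that multiplication by $a$ is invertible on $\Gr(E)$. For the first direction of absolute continuity I would simply read off Proposition \ref{prop7.B}: it presents $M^{[a\mathcal L]}$ as the product of $M^{[\mathcal L]}$ with the normalized multiplicative functional $\PPsi_{|a|^2,\,M^{[\mathcal L]}}$. By its construction in Proposition \ref{prop7.A}, this functional is an $L^1(\Om, M^{[\mathcal L]})$-limit of the strictly positive normalized cylinder functions $\PPsi_{|a|^2_X,\,M^{[\mathcal L]}}$, each of integral $1$; hence it is a nonnegative element of $L^1$ of total integral $1$, and it serves as an honest Radon--Nikod\'ym density. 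This gives $M^{[a\mathcal L]}\ll M^{[\mathcal L]}$.

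For the reverse direction I would introduce the pointwise inverse $b:=1/a$ and apply Proposition \ref{prop7.B} a second time, now to the pair $(a\mathcal L, b)$. The only thing to verify is that $b$ meets the hypotheses: it is nonvanishing because $a$ is, and $b(\ccdot)-1\in\ell^2(\X)$. The latter holds because $a(\ccdot)-1\in\ell^2(\X)$ forces $a(x)\to 1$, so that $a$ is bounded away from $0$ outside a finite set; writing $b(x)-1=(1-a(x))/a(x)$, the tail is dominated by a constant multiple of $|1-a(x)|$, while the finitely many remaining terms are harmless. Since $b\cdot(a\mathcal L)=(ba)\mathcal L=\mathcal L$, Proposition \ref{prop7.B} yields
$$
M^{[\mathcal L]}=M^{[b\cdot(a\mathcal L)]}=\PPsi_{|b|^2,\,M^{[a\mathcal L]}}\cdot M^{[a\mathcal L]},
$$
whence $M^{[\mathcal L]}\ll M^{[a\mathcal L]}$. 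Combining the two inclusions gives mutual absolute continuity, i.e.\ equivalence.

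The argument is short because all the substantive analytic work is already packaged in Propositions \ref{prop7.A} and \ref{prop7.B}. The only genuine point requiring care is checking that $b-1\in\ell^2(\X)$, and more conceptually the observation that invertibility of $a$ makes the situation symmetric, so that a single proposition applied twice supplies both inclusions. I do not expect any serious obstacle here; the main thing to be careful about is \emph{not} to attempt to prove strict positivity of $\PPsi_{|a|^2,\,M^{[\mathcal L]}}$ directly (which would be awkward to extract from the $L^1$-limit construction), but instead to obtain the reverse absolute continuity through the inverse function $b$, which sidesteps that difficulty entirely.
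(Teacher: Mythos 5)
Your proof is correct. The paper in fact states this corollary without any proof, treating it as immediate from Proposition \ref{prop7.B}; your argument --- reading off $M^{[a\mathcal L]}\ll M^{[\mathcal L]}$ from the displayed identity, then applying the proposition a second time to the pair $(a\mathcal L,\,b)$ with $b=1/a$ after checking $b(\ccdot)-1\in\ell^2(\X)$ --- is exactly the natural reasoning left implicit, and your observation that this symmetric route sidesteps any direct proof of a.e.\ positivity of $\PPsi_{|a|^2,\,M^{[\mathcal L]}}$ is well taken.
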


\subsection{A continuity property for normalized multiplicative functionals}
Let  $a(x)$ be a fixed strictly positive function on $\X$ such that $a(\ccdot)-1\in\ell^2(\X)$ and let $K$ range  over the set of projection operators on $\ell^2(\X)$. For any such $K$, the function $\PPsi_{a,M^K}$ is nonnegative, hence one can define the function $\PPsi^{1/2}_{a,M^K}:=(\PPsi_{a,M^K})^{1/2}$. It belongs to $L^2(\Om,M^K)$, because the function $\PPsi_{a,M^K}$ belongs to $L^1(\Om,M^K)$ by virtue to Proposition \ref{prop6.A}. Therefore, the quantity
\begin{equation}\label{eq7.D}
\EE_{M^K}(\PPsi^{1/2}_{a,M^K})=(\PPsi^{1/2}_{a,M^K}, \; \one)_{L^2(\Om,M^K)}
\end{equation}
is well defined. The next technical lemma is used in the end of the proof of Theorem \ref{thm8.A}

\begin{lemma}\label{lemma7.A}
The quantity \eqref{eq7.D} depends continuously on $K$ with respect to the strong operator topology on the set of projection operators.
\end{lemma}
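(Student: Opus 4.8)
The plan is to realize the quantity \eqref{eq7.D} as a uniform limit of expressions that are manifestly continuous in $K$, and then invoke the stability of continuity under uniform limits. Denote \eqref{eq7.D} by $Q(K):=\EE_{M^K}(\PPsi^{1/2}_{a,M^K})$, and for a finite subset $X\subset\X$ put $Q_X(K):=\EE_{M^K}(\PPsi^{1/2}_{a_X,M^K})$, where $a_X$ is the truncation of $a$. Writing $b:=a^{1/2}$ (a strictly positive function with $b(\ccdot)-1\in\ell^2(\X)$) and using $\Psi_{a_X}\ge0$, we have $\Psi_{a_X}^{1/2}=\Psi_{b_X}$, whence
\[
Q_X(K)=\frac{\EE_{M^K}(\Psi_{b_X})}{\EE_{M^K}(\Psi_{a_X})^{1/2}}.
\]
Both $\Psi_{b_X}$ and $\Psi_{a_X}$ are cylinder functions depending only on the coordinates $\om(x)$, $x\in X$, so their $M^K$-expectations are finite linear combinations of correlation functions of $M^K$ at points of $X$, each a minor $\det[K(x_i,x_j)]$ built from the entries $K(x,y)=(Ke_y,e_x)$ with $x,y\in X$. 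Since strong convergence $K_n\to K$ forces $K_n(x,y)\to K(x,y)$, these expectations are continuous in $K$ for the strong operator topology; moreover $\Psi_{a_X}\ge\prod_{x\in X:\,a(x)<1}a(x)>0$ pointwise, so the denominator is bounded away from $0$ uniformly in $K$. Hence each $Q_X$ is strongly continuous.

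First I would reduce the continuity of $Q$ to a single uniform estimate. Since $M^K$ is a probability measure, Cauchy--Schwarz combined with the elementary inequality $(\sqrt s-\sqrt t)^2\le|s-t|$ for $s,t\ge0$ gives
\[
|Q(K)-Q_X(K)|\le\|\PPsi^{1/2}_{a,M^K}-\PPsi^{1/2}_{a_X,M^K}\|_{L^2(\Om,M^K)}\le\|\PPsi_{a,M^K}-\PPsi_{a_X,M^K}\|^{1/2}_{L^1(\Om,M^K)}.
\]
It therefore suffices to prove
\[
\lim_{\{X\}}\,\sup_{K}\,\|\PPsi_{a,M^K}-\PPsi_{a_X,M^K}\|_{L^1(\Om,M^K)}=0,
\]
the supremum being over all projection operators $K$. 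Granting this, $Q_X\to Q$ uniformly on the set of projections, and a uniform limit of strongly continuous functions is strongly continuous, which is the assertion of the lemma.

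The hard part is exactly this uniformity. Proposition \ref{prop7.A} (via \cite[Proposition 5.5]{BufOls}) supplies the $L^1$-convergence $\PPsi_{a_X,M^K}\to\PPsi_{a,M^K}$ for each fixed $K$, but here the rate must be controlled independently of $K$. I would obtain it by inspecting the Cauchy estimate in the proof of \cite[Proposition 5.5]{BufOls}: for $X\subset Y$ the bound on $\|\PPsi_{a_Y,M}-\PPsi_{a_X,M}\|_{L^1(\Om,M)}$ is governed by the tail sum $\sum_{x\in\X\setminus X}(a(x)-1)^2$ and by second-moment quantities of the multiplicative functional. For a determinantal $M=M^K$ the latter enter the kernel only through the contraction property $0\le K\le1$: one has $\rho_1(x)-\rho_1(x)^2=K(x,x)(1-K(x,x))\le1$ and $\rho_2(x,y)-\rho_1(x)\rho_1(y)=-|K(x,y)|^2$, together with $\sum_y|K(x,y)|^2\le K(x,x)\le1$. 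A short Cauchy--Schwarz computation then bounds the relevant second moment of $\sum_{x\in\X\setminus X}(a(x)-1)\om(x)$ by $C\sum_{x\in\X\setminus X}(a(x)-1)^2$ with $C$ absolute, so the Cauchy estimate is uniform in $K$; letting $Y\uparrow\X$ yields a bound on $\|\PPsi_{a,M^K}-\PPsi_{a_X,M^K}\|_{L^1}$ that is uniform in $K$ and tends to $0$ as $X\uparrow\X$, being the tail of a convergent series. I expect the genuine obstacle to be precisely the verification that the estimate of \cite{BufOls} is uniform over the kernel rather than only valid for each fixed measure; the covariance bounds just displayed are the mechanism that makes this go through.
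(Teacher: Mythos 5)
Your skeleton is sound as far as it goes: each $Q_X(K):=\EE_{M^K}(\PPsi^{1/2}_{a_X,M^K})$ is indeed strongly continuous in $K$ (cylinder expectations are polynomials in finitely many matrix entries $K(x,y)$, and the denominator is bounded below uniformly in $K$), the chain $|Q(K)-Q_X(K)|\le\|\PPsi_{a,M^K}-\PPsi_{a_X,M^K}\|_{L^1(\Om,M^K)}^{1/2}$ is correct, and a uniform limit of continuous functions is continuous. But this shifts the entire weight of the lemma onto the claim
\begin{equation*}
\lim_{\{X\}}\;\sup_{K}\;\|\PPsi_{a,M^K}-\PPsi_{a_X,M^K}\|_{L^1(\Om,M^K)}=0,
\end{equation*}
which you do not prove: you propose to read it off from the Cauchy estimate inside the proof of \cite[Proposition 5.5]{BufOls}. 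That is a genuine gap, and not a routine verification. Proposition \ref{prop7.A} is stated for an \emph{arbitrary} $M\in\PP(\Om)$, and at that level of generality no modulus of convergence controlled by the tail sum $\sum_{x\in\X\setminus X}(a(x)-1)^2$ can exist. Indeed, take $a\equiv1$ on $X$, $a=1+\tfrac1k$ on $k$ points outside $X$, and let $M$ give mass $\tfrac12$ to the empty configuration and mass $\tfrac12$ to the configuration consisting of those $k$ points; then $\PPsi_{a_X,M}\equiv1$, the tail sum equals $\tfrac1k\to0$, while a direct computation gives $\|\PPsi_{a,M}-\PPsi_{a_X,M}\|_{L^1}\to(e-1)/(e+1)>0$. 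So the uniformity you need is a fact specifically about determinantal measures, requiring the kernel in an essential, quantitative way. Your covariance bounds (variances of linear statistics $\le C\sum f(x)^2$ uniformly over contractions) are correct but do not bridge the gap: the object to be controlled is the $L^1$ distance of \emph{normalized multiplicative} functionals, and the quantities that actually govern the Cauchy estimate are of the type $\EE_{M^K}(\PPsi_{a_Y,M^K}^2)=\det(1+(a_Y^2-1)K)/\det(1+(a_Y-1)K)^2$, i.e.\ ratios of Fredholm determinants, whose kernel-uniform closeness to $1$ must itself be proved. Carrying that out amounts to a quantitative, kernel-uniform re-proof of Proposition \ref{prop7.A}, which is the real content of the lemma.

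This is precisely where the paper takes a different and shorter route: it never needs uniformity in $K$. After your Step-1-type limit (used only pointwise, for fixed $K$), it derives in Steps 2--3 the closed formula
\begin{equation*}
\EE_{M^K}(\PPsi^{1/2}_{a,M^K})=\frac{\detreg(1+(a^{1/2}-1)K)}{\bigl(\detreg(1+(a-1)K)\bigr)^{1/2}}\cdot
\exp\bigl\{\tr\bigl(([a^{1/2}-1]-\tfrac12[a-1])K\bigr)\bigr\},
\end{equation*}
valid for every projection $K$, and then in Step 4 checks directly that the two Hilbert--Carleman regularized determinants and the trace are strongly continuous on the set of projections, via the identity $\Vert b(K-K')\Vert_{HS}^2=\sum_x|b(x)|^2\Vert(K-K')e_x\Vert^2$ and a finite/tail splitting of the sum. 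The regularized-determinant formula is exactly the kernel-uniform quantitative control that your outline is missing; completing your route would force you to develop the same determinant estimates, so the detour through uniform convergence buys nothing.
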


\begin{proof}
\emph{Step} 1. We claim that
$$
\EE(\PPsi^{1/2}_{a,M^K})=\lim_{\{X\}}\EE(\PPsi^{1/2}_{a_X,M^K})
$$
Indeed, let us abbreviate $f:=\PPsi_{a,M^K}$ and $f_X:=\PPsi_{a_X,M^K}$, and also $L^1:=L^1(\Om,M^K)$ and $L^2:=L^2(\Om,M^K)$. We know that $f_X$ and $f$ are nonnegative, belong to $L^1(\Om,M^K)$, and $f_X\to f$ in the $L^1$-norm (Proposition \ref{prop7.A}). Next, we use the simple inequality, which holds for any nonnegative reals $\al$ and $\be$: 
$$
(\al^{1/2}-\be^{1/2})^2\le |\al-\be|.
$$ 
It implies 
$$
\Vert f_X^{1/2}-f^{1/2}\Vert^2_{L^2}\le \Vert f_X-f\Vert_{L^1}.
$$
Therefore, $f^{1/2}_X\to f^{1/2}$ in $L^2$, which implies $(f^{1/2}_X,\;1)\to (f^{1/2},\;1)$, as desired.

\emph{Step} 2. Recall the definition of the \emph{Hilbert--Carleman regularized determinant} $\detreg(\ccdot)$, see \cite{GGK} or \cite{GK}. For a trace class operator $A$ on a Hilbert space, the definition is
$$
\detreg(1+A):=\det(1+A)\exp(-\tr A).
$$
It is well known (\cite{GGK}, \cite{GK}) that this expression is continuous in the Hilbert--Schmidt metric and extends by continuity to the space of Hilbert--Schmidt operators. Further, if $A$ is selfadjoint, then
\begin{equation}\label{eq7.A1}
\detreg(1+A)=\prod(1+\la_i)e^{-\la_i},
\end{equation}
where $\{\la_i\}$ are the eigenvalues of $A$ counted with their multiplicities (the product converges because $A$  is Hilbert--Schmidt).

We are going to prove the formula
\begin{equation}\label{eq7.A}
\EE_{M^K}(\PPsi^{1/2}_{a,M^K})=\frac{\det_2(1+(a^{1/2}-1)K)}{(\det_2(1+(a-1)K))^{1/2}}\cdot \exp\left\{\tr(([a^{1/2}-1]-\tfrac12[a-1])K)\right\}.
\end{equation}
In this step, we will show that the right-hand side of \eqref{eq7.A} is well defined. 

Indeed, both regularized determinants are defined because the operators $(a^{1/2}-1)K$ and $(a-1)K$ are Hilbert--Schmidt, which in turn follows from the assumption that $a(\ccdot)-1\in\ell^2(\X)$. Next, the same assumption implies that the function 
$$
[a^{1/2}(x)-1]-\tfrac12[a(x)-1]
$$
belongs to $\ell^1(\X)$, so that the operator $(([a^{1/2}-1]-\tfrac12[a-1])K$ is trace class and hence its trace is well defined.

It remains to show that the regularized determinant in the denominator is strictly positive. To do this, denote by $A$ the operator of multiplication by the function $a(x)$ and write $A$ in the block form $\begin{bmatrix}A_{11} & A_{12}\\ A_{21} & A_{22}\end{bmatrix}$ according to the orthogonal decomposition
$$
\ell^2(\X)=(K\ell^2(\X))\oplus((1-K)\ell^2(\X)).
$$
Then we obtain
$$
1+(a-1)K=1+(A-1)K=\begin{bmatrix}A_{11} & 0\\ A_{21} & 1\end{bmatrix},
$$
which implies the equality
$$
\detreg(1+(a-1)K)=\detreg A_{11}.
$$

From our assumption on the function $a(x)$ we see that the operator $A-1$ is Hilbert--Schmidt and, moreover, $A$ is selfadjoint and there exists $\epsi>0$ such that $A\ge\epsi$. Therefore, $A_{11}$ has the same properties.  Applying the equality \eqref{eq7.A1} (with $A$ replaced by $A_{11}-1$ we finally obtain that $\detreg A_{11}$ is strictly positive, which is just we want.

\emph{Step} 3. We proceed now to the proof of \eqref{eq7.A}. 
First, we check this formula under the additional assumption that $a(\ccdot)-1$ is finitely supported. For any function $b(x)$ such that $b(\ccdot)-1$ is finitely supported, one has 
$$
\EE_{M^K}(\Psi_b)=\det(1+(b-1)K).
$$
Applying this to $b=a^{1/2}$ and using the fact that $\Psi_a^{1/2}=\Psi_{a^{1/2}}$ we obtain
$$
\EE_{M^K}(\PPsi^{1/2}_{a,M^K})=\frac{\EE_{M^K}(\Psi_{a^{1/2}})}{(\EE_{M^K}(a))^{1/2}}=\frac{\det(1+(a^{1/2}-1)K)}{(\det(1+(a-1)K))^{1/2}}.
$$
Then this is transformed into \eqref{eq7.A} using the definition of $\det_2$. 

Now we can drop the additional assumption. Denote the right-hand side of \eqref{eq7.A} by $RHS(a)$. By virtue of step 1, it suffices to check that 
$$
\lim_{\{ X\}} RHS(a_X) =RHS(a).
$$
To see this,  we use the fact that $(a_X-1)K\to (a-1)K$ and $(a^{1/2}_X-1)K\to (a^{1/2}-1)K$ in the Hilbert-Schmidt norm, and
$$
([a_X^{1/2}-1]-\tfrac12[a-1])K \to ([a^{1/2}-1]-\tfrac12[a-1])K
$$
in the trace-class norm. 

\emph{Step} 4. Now it suffices to prove that the expression of the right-hand side of \eqref{eq7.A} is continuous with respect to the strong operator topology on the set $\{K\}$ of projection operators. 

Let $b(x)$ be a function from $\ell^2(\X)$, $K$ and  $K'$ be two projection operators, and $X\subset \X$ be a finite set.  The squared Hilbert-Schmidt norm of $bK-bK'$ can be written as
\begin{multline*}
\Vert bK-bK'\Vert_{HS}^2=\sum_{x\in\X} \sum_{y\in\X}|b(x)|^2\,|K(x,y)-K'(x,y)|^2\\=\sum_{x\in\X} \sum_{y\in\X}|b(x)|^2\,|K(y,x)-K'(y,x)|^2=\sum_{x\in \X}|b(x)|^2\Vert (K-K') e_x\Vert^2,
\end{multline*}
where the second equality holds because $K(x,y)=\overline{K(y,x)}$ and $K'(x,y)=\overline{K'(y,x)}$. 

Given a finite subset $X\subset\X$, we may split the latter sum into two parts:
\begin{equation}\label{eq7.C}
\sum_{x\in \X}|b(x)|^2\Vert (K-K') e_x\Vert^2=\sum_{x\in X}|b(x)|^2\Vert (K-K') e_x\Vert^2+\sum_{x\in\X\setminus X}|b(x)|^2\Vert (K-K') e_x\Vert^2
\end{equation}
Now, for any $\epsi>0$ we may choose $X$ so that 
$$
\sum_{x\in\X\setminus X} |b(x)|^2\le\epsi.
$$
Then the second sum on the right-hand side of \eqref{eq7.C} does not exceed $4\epsi$ while the first sum goes to $0$ as $K'$ approaches $K$ in the strong operator topology. 

This argument shows that the two regularized determinants in \eqref{eq7.A} depend continuously on $K$.

It remains to handle the third factor in \eqref{eq7.A}.  That it is, to check that the quantity $\tr(bK)$,  where
$$
b(x):=[a^{1/2}(x)-1]-\tfrac12[a(x)-1],
$$
depends continuously on $K$ in the strong operator topology on the set $\{K\}$ (which coincides with the weak operator topology). But this is easy to do using the same trick: we use the fact that the function $b(\ccdot)$ belongs to $\ell^1(\X)$ and write
$$
\tr(bK)-\tr(b K')=\sum_{x\in X}b(x)(K(x,x)-K'(x,x)) +\sum_{x\in\X\setminus X} b(x)(K(x,x)-K'(x,x)).
$$
With an appropriate choice of $X$, the second sum can be made arbitrarily small, uniformly on $K,K'$, while the first sum goes to $0$ as $K'$ approaches $K$ in the weak operator topology. 

This completes the proof.
\end{proof}

\section{Limits of discrete orthogonal polynomial ensembles}\label{sect8}

\subsection{Large-$N$ limit transition}\label{sect8.1}

We fix a countable discrete set $\X\subset\R$ and a projection operator $K$ on $\ell^2(\X)$. Let $L$ denote the range of $K$ and $K(x,y):=(K e_y,e_x)$ be the matrix of $K$.  

Next, we impose on $\X$ and $K$ the following conditions:

\smallskip

1. The series $\sum_{x\in\X}(1+|x|^2)^{-1}$ in convergent.

2. $L$ is $(X,X')$-regular for any pair $(X,X')$ of finite disjoint subsets of $\X$. 

3. There exists an infinite sequence $\X_1\subseteq\X_2\subseteq \dots$ of nested subsets of $\X$ and a sequence of kernels $K_N(x,y)$, $N=1,2,\dots$ such that:

3a. the union of all $\X_N$'s is the whole set $\X$, and in the case when the sets $\X_N$ are finite we require that $|\X_N|-N\to+\infty$;

3b. for each $N$, the kernel $K_N(x,y)$ corresponds to an $N$-point orthogonal polynomial ensemble on $\X_N$, and one has $K_N(x,y)\to K(x,y)$ pointwise as $N\to\infty$.

\smallskip

Note that, by virtue of condition 1,  $(\X,<)$ has finite intervals.

\begin{theorem}\label{thm8.A}
Under these conditions, the measure $M:=M^K$ is $\SS$-quasi-invariant, so that the state $\tau[M]$ is well defined. Furthermore,  $\tau[M]$ is the quasifree state $\varphi[K]$.
\end{theorem}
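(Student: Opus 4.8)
The plan is to deduce both assertions from the finite case (Theorem~\ref{thm5.A}), but by two different mechanisms: quasi-invariance from the regularity hypothesis (condition~2) via the conditioning formalism of Section~\ref{sect6}, and the identity $\tau[M]=\varphi[K]$ by an actual large-$N$ limit. It is worth isolating these, because the approximating measures $M^{K_N}$ live on $\Om(\X_N)$ and are \emph{not} $\SS$-quasi-invariant on all of $\Om$, so quasi-invariance cannot simply be passed to the limit.

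For quasi-invariance I would use condition~2 directly. By Proposition~\ref{prop6.C} it guarantees that for every pair $(X,X')$ of finite disjoint subsets the conditional measure $M_{X,X'}$ is defined and equals $M^{[L_{X,X'}]}$. By Proposition~\ref{prop6.B}, $\SS$-quasi-invariance is then equivalent to the family of equivalences $M^{[L_{X,X'}]}\sim M^{[L_{Y,Y'}]}$ for all $X\sqcup X'=Y\sqcup Y'$ with $|X|=|Y|$. Using Lemma~\ref{lemma6.A} one reduces these to the elementary case of exchanging a single particle with a single hole; in that case one exhibits a nonvanishing function $a$ on the reduced index set, with $a-1\in\ell^2$, such that $L_{Y,Y'}=a\,L_{X,X'}$, and Corollary~\ref{cor7.A} then yields the equivalence. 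This is essentially Bufetov's quasi-invariance theorem recast in the present language.

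For the identity $\tau[M]=\varphi[K]$ I would argue by continuity in $K$. Theorem~\ref{thm5.A} applied to the $N$th ensemble gives $\tau[M^{K_N}]=\varphi[K_N]$ (for $N$ large the fixed generators being tested lie in $\A^0(\X_N)$, and the extra holes outside $\X_N$ contribute trivially). The pointwise convergence $K_N(x,y)\to K(x,y)$ forces $\varphi[K_N]\to\varphi[K]$ pointwise on every monomial in the $a^+_xa^-_y$, hence on all of $\A^0$; moreover, since also $K_N(x,x)\to K(x,x)$, the columns $K_Ne_x$ converge in $\ell^2$-norm, so $K_N\to K$ in the strong operator topology. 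It therefore suffices to prove $\tau[M^{K_N}](b)\to\tau[M^K](b)$ for a generating monomial $b=a^+_{x_n}\cdots a^+_{x_1}a^-_{y_1}\cdots a^-_{y_n}$, and for this I would produce a formula for $\tau[M^K](b)$ that is manifestly continuous in $K$ for the strong topology. By \eqref{eq4.H1}, \eqref{eq4.I1}, and \eqref{eq4.I2} this value is an integral over a cylinder-type subset of $\Om$ of a product of $\pm1$-valued factors $\T[M](\epsi_z)$ against a square root of a Radon--Nikod\'ym cocycle; that cocycle is exactly the density relating the conditional measures of the previous paragraph, so by Proposition~\ref{prop7.B} it is a normalized multiplicative functional $\PPsi_{|a|^2,\,\cdot}$. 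This rewrites $\tau[M^K](b)$ through conditional kernels, continuous in $K$ by Proposition~\ref{prop6.A}, and through expectations of the form $\EE_{M^K}(\PPsi^{1/2}_{a,M^K})$, continuous in $K$ for the strong topology by Lemma~\ref{lemma7.A}. Passing to the limit along $K_N\to K$ then gives $\tau[M^K](b)=\lim_N\tau[M^{K_N}](b)=\lim_N\varphi[K_N](b)=\varphi[K](b)$.

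The hard part will be this last step: assembling an explicit expression for $\tau[M^K]$ on arbitrary monomials that isolates precisely the multiplicative-functional expectations covered by Lemma~\ref{lemma7.A}, while keeping the sign bookkeeping (the $\sgn$/$\prod\T(\epsi_z)$ factors) under control uniformly in $N$. The construction of the multiplier $a$ with $a-1\in\ell^2$ realizing $L_{Y,Y'}=a\,L_{X,X'}$ in the quasi-invariance step is the other genuinely delicate point, since the identification of one reduced subspace as a multiplicative modification of the other is not automatic.
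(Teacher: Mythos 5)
Your treatment of the second assertion, $\tau[M]=\varphi[K]$, is essentially the paper's own argument: start from Theorem \ref{thm5.A} for the pre-limit ensembles, observe that $\varphi[K_N]\to\varphi[K]$ is trivial, and reduce the nontrivial convergence $\tau[M_N](b)\to\tau[M](b)$ to expressions of the form $\EE_{\mathcal M}(\PPsi^{1/2}_{|a|^2,\,\mathcal M})$, whose continuity in the projection (strong operator topology) is Lemma \ref{lemma7.A}, combined with continuity of the reduction map (Proposition \ref{prop6.A}). The ``sign bookkeeping'' you worry about is handled in the paper exactly as you guess: $T[M](b)$ is written as a finite sum $\sum_i\T[M](f_is_i)$ via Theorem \ref{thm4.A}, each $f_i$ is decomposed over cylinder indicators $\one_{X,X'}$, and on each cylinder the $\pm1$ factors are constants, leaving only integrals of square roots of Radon--Nikod\'ym densities.

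The genuine gap is in your quasi-invariance step, and it propagates into the continuity step. You claim quasi-invariance follows from condition 2 alone, through an asserted identity $L_{Y,Y'}=a\,L_{X,X'}$ (for the elementary exchange of a particle and a hole) followed by Corollary \ref{cor7.A}; you yourself flag this identity as ``not automatic,'' and indeed it is not: condition 2 (regularity) only guarantees that the conditional measures exist and equal $M^{[L_{X,X'}]}$ and $M^{[L_{Y,Y'}]}$; it carries no information relating the two reduced subspaces to each other, and no soft argument produces the multiplier $a$. This is precisely the point where the approximation hypothesis (condition 3) enters the paper's proof of \emph{both} assertions: for the $N$-point orthogonal polynomial ensembles, the density identity $(M_N)_{Y,Y'}=\PPsi_{|a|^2,\,(M_N)_{X,X'}}\cdot(M_N)_{X,X'}$ follows directly from the explicit product formula \eqref{eq5.G}, with the crucial feature that the multiplier $a(u)=\prod_{y\in Y}(u-y)\big/\prod_{x\in X}(u-x)$ depends neither on $N$ nor on the weight $W$; one then passes to the limit in this identity using Proposition \ref{prop6.A}, Proposition \ref{prop7.B}, and Lemma \ref{lemma6.D}, obtaining \eqref{eq8.A}. (So your opening remark, that quasi-invariance cannot be obtained by a limit transition because the $M_N$ are not quasi-invariant on all of $\Om$, misidentifies what is passed to the limit: not quasi-invariance itself, but a density identity with an $N$-independent multiplier.) Without this limit argument --- or an appeal to Bufetov's theorem as a black box, whose hypotheses you would then have to verify --- your quasi-invariance claim, and with it your identification of the Radon--Nikod\'ym cocycle as the normalized multiplicative functional $\PPsi_{|a|^2,\,M_{X,X'}}$ needed before Lemma \ref{lemma7.A} can be applied, remains unsupported.
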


\begin{proof} Let us fix some notation: $K_N$ is the projection operator with the kernel $K_N(x,y)$; $L_N$ is its range; $M_N=M^{K_N}$ is the measure with the correlation kernel $K_N(x,y)$. In the case when $\X_N$ is a proper subset of $\X$ we embed $\ell^2(\X_N)$ into $\ell^2(\X)$ in the natural way, so that $L_N$ may be viewed as a subspace of $\ell^2(\X)$ and $K_N$ may be viewed as a projection operator acting on the whole space $\ell^2(\X)$; this enables us to treat $M_N$ as a measure on $\Om$. 

The pointwise convergence of the kernels is equivalent to the strong convergence $K_N\to K$, which in turns is equivalent to the convergence $L_N\to L$, by the definition of the topology in $\Gr(E)$. It also implies that $M_N\to M$ in the weak topology of $\PP(\Om)$. 

\smallskip

\emph{Step} 1. Let us prove that $M$ is $\SS$-quasi-invariant. Consider arbitrary finite subsets $X,X',Y,Y'$ of $\X$ such that $X\sqcup X'=Y\sqcup Y'=:Z$ and $|X|=|Y|$. Condition 2 guarantees that the hypothesis of Proposition \ref{prop6.C}  is satisfied, so that the conditional measures $M_{X,X'}$ and $M_{Y,Y'}$ are well defined. We will prove that they are equivalent. By virtue of Proposition \ref{prop6.B}, this will imply that $M$ is quasi-invariant.  

From Proposition \ref{prop6.C} we  know that $M_{X,X'}=M^{[L_{X,X'}]}$ and $M_{Y,Y'}=M^{[L_{Y,Y'}]}$. 

Define the function $a(\ccdot)$ on $\X\setminus Z$ by
\begin{equation}\label{eq8.A1}
a(u):=\dfrac{\prod_{y\in Y}(u-y)}{\prod_{x\in X}(u-x)}, \qquad u\in\X\setminus Z.
\end{equation}
Since $|X|=|Y|$, we have $a(u)=1+O(|u|^{-1})$ as $u\to\pm\infty$. Therefore, by virtue of condition 1, the function $a(u)-1$ belongs to $\ell^2(\X\setminus Z)$. Then Proposition \ref{prop7.A}  implies that for any determinantal measure $\mathcal M\in\PP(\Om(\X\setminus Z))$ admitting a projection correlation kernel, the normalized multiplicative functional $\PPsi_{|a|^2,\, \mathcal M}$ is well defined. 

We are going to prove that 
\begin{equation}\label{eq8.A}
M_{Y,Y'}=\PPsi_{|a|^2,\,\mathcal M} \cdot M_{X,X'}, \qquad \mathcal M:=M_{X,X'}.
\end{equation}
This will imply the desired claim (indeed,  from \eqref{eq8.A} it follows that $M_{Y,Y'}$ is absolutely continuous with respect to $M_{X,X'}$, and since the similar claim also holds after switching $(X,X')$ with $(Y,Y')$, the two measures are equivalent).  

Observe that an analogue of \eqref{eq8.A} holds for the pre-limit measures:
\begin{equation}\label{eq8.B}
(M_N)_{Y,Y'}=\PPsi_{|a|^2,\,\mathcal M_N} \cdot (M_N)_{X,X'}, \qquad \mathcal M_N:=(M_N)_{X,X'},
\end{equation}
provided that $N$ is large enough. 

Indeed, given $(X,X')$, the conditional measure $(M_N)_{X,X'}$ exists as soon the set $C(X,X')\cap\Om_N$ is nonempty, which holds true for large $N$ due to condition 3a.  Next, from Definition \ref{def6.C} of conditional measures $(\ccdot)_{X,X'}$ and the definition of orthogonal polynomial ensembles (see \eqref{eq5.G}) it follows that 
$$
(M_N)_{Y,Y'}=\const\cdot \Psi_{|a|^2} \cdot (M_N)_{X,X'},
$$
which implies \eqref{eq8.B}, because both $(M_N)_{Y,Y'}$ and $(M_N)_{X,X'}$ are probability measures. 

A remarkable feature of formula \eqref{eq8.B} (which plays a key role in our argument) is that the definition \eqref{eq8.A1} of the  function $a(\ccdot)$ entering this formula depends only on $X$ and $Y$, but neither on $N$ nor on the $N$th weight function. The reason, of course, is a special structure of formula \eqref{eq5.G}. 

We will deduce \eqref{eq8.A} from \eqref{eq8.B}. To do this we use the fact that
$$
(L_N)_{X,X'} \to L_{X,X'}, \qquad (L_N)_{Y,Y'} \to L_{Y,Y'},
$$
which holds by virtue of Proposition \ref{prop6.A}.

From $(L_N)_{Y,Y'} \to L_{Y,Y'}$ it follows that the left-hand side of \eqref{eq8.B} converges to the left-hand side of \eqref{eq8.A} in the weak topology of $\PP(\Om(\X\setminus Z))$. Therefore, it suffices to prove that the same holds for the right-hand sides. 

Recall the equality
$$
M^{[a \mathcal L]}=\PPsi_{|a|^2,M^{[\mathcal L]}}\ccdot M^{[\mathcal L]}
$$
from Proposition \ref{prop7.B}.  
Applying it to the subspaces $\mathcal L:=L_{X,X'}$ and $\mathcal L_N:=(L_N)_{X,X'}$ (and replacing $\X$ by $\X\setminus(X\sqcup X')$) we see that the measures on the right-hand sides of \eqref{eq8.A} and \eqref{eq8.B} can be written as  $M^{[a \mathcal L]}$ and $M^{[ a \mathcal L_N]}$, respectively. To show that $M^{[a \mathcal L_N]}\to M^{[a \mathcal L]}$ it remains to check that $ a \mathcal L_N\to  a \mathcal L$. 

We claim that this follows from the convergence $\mathcal L_N\to \mathcal L$. Indeed, recall that the function $a(\ccdot)$ is bounded together with its inverse $a^{-1}(\ccdot)$, and observe that $(a\mathcal L)^\perp=a^{-1}\mathcal L^\perp$. Then it suffices to apply Lemma \ref{lemma6.D}.

\smallskip

\emph{Step} 2. We proceed to the proof of the equality $\tau[M]=\varphi[K]$. This means that 
\begin{equation}\label{eq8.C}
\tau[M](A)=\varphi[K](A), 
\end{equation}
where $A$ is an arbitrary monomial composed from elements of the form $a^+_xa^-_y$, where $x,y\in\X$. 
We know from Theorem \ref{thm5.A} that a similar equality holds for orthogonal polynomial ensembles. It follows that 
\begin{equation}\label{eq8.D}
\tau[M_N](A)=\varphi[K_N](A)
\end{equation}
for all $N$ large enough, depending on $A$. 

Now let $A$ be fixed. We are going to derive \eqref{eq8.C} from \eqref{eq8.D} by passing to the limit in both sides of  \eqref{eq8.D} as $N\to\infty$. For the right-hand side this is trivial: we have $\varphi[K_N](A)\to \varphi[K](A)$ because $K_N\to K$. A nontrivial task is to justify the limit transition for the left-hand side, that is, to prove that 
\begin{equation}\label{eq8.E}
\lim_{N\to\infty} \tau[M_N](A)=\tau[M](A).
\end{equation}

In this step we reduce \eqref{eq8.E} to the limit relation 
\begin{equation}\label{eq8.F}
\lim_{N\to\infty}(\T[M_N](fs)\one,\one)=(\T[M](fs)\one,\one),
\end{equation}
where $f$ is a monomial formed from elements of the form $\epsi_x$ (where $x\in\X$)  and $s\in\SS$. Here we are using the notation introduced in Section \ref{sect3}: $\T[M]$ and $\T[M_N]$ are the representations of the crossed product algebras $C^*(\Om)\rtimes \SS$ and $C^*(\Om(\X_N))\rtimes \SS(\X_N)$, respectively. For given $f$ and $s$, the prelimit expression on the left makes sense for all $N$ large enough. 

Thus, suppose that the validity of \eqref{eq8.F} is already established. Then \eqref{eq8.E} is obtained easily. Indeed, from the proof of Theorem \ref{thm4.A} it follows that there exist elements $s_1,\dots,s_k\in\SS$ and polynomials $f_1,\dots,f_k$ built from elements $\epsi_x$, $x\in\X$, such that
$$
T[M](A)=\T[M](f_1 s_1)+\dots+\T[M](f_ks_k).
$$
Moreover, exactly the same relation holds when $M$ is replaced with $M_N$, provided that $N$ is large enough. This shows that (by virtue of \eqref{eq8.F})
$$
\lim_{N\to\infty}(T[M_N](A)\one,\one)=(T[M]\one,\one),
$$
which is precisely the desired relation \eqref{eq8.E}.

\smallskip

\emph{Step} 3. Let us prove \eqref{eq8.F}. Fix a finite subset $Z\subset\X$ such that the permutation $s$ acts trivially outside $Z$ and $f$ is expressed exclusively through the elements $\epsi_x$ with $x\in Z$. Given a partition $Z=X\sqcup X'$, let $\one_{X,X'}$ denote the characteristic function of the cylinder set $C(X,X')$. Note that
$$
\one=\sum_{(X,X'):\, X\sqcup X'=Z}\one_{X,X'}
$$
and that the function $f$ is a linear combination of the functions $\one_{X,X'}$. Therefore, we may assume, without loss of generality, that $f$ simply equals one of them. This enables us to eliminate $f$ and reduce \eqref{eq8.F} to the limit relation
 \begin{equation}\label{eq8.G}
\lim_{N\to\infty}(\T[M_N](s)\one_{Y,Y'},\one_{X,X'})=(\T[M](s)\one_{Y,Y'},\one_{X,X'}),
\end{equation}
where $X\sqcup X'$ and $Y\sqcup Y'$ are two arbitrary partitions of $Z$. 

Note that the transformation $\om\mapsto s^{-1}(\om)$ permutes the elementary cylinder sets  corresponding to partitions of $Z$. Then from \eqref{eq2.B} and \eqref{eq2.A} it follows that 
$$
\T[M](s)\one_{Y,Y'}(\om)=\sqrt{\dfrac{{}^s\!M}{M}(\om)}\one_{s(Y),s(Y')}(\om), \qquad \om\in\Om,
$$
and likewise
$$
\T[M_N](s)\one_{Y,Y'}(\om)=\sqrt{\dfrac{{}^s\!M_N}{M_N}(\om)}\one_{s(Y),s(Y')}(\om), \qquad \om\in\Om.
$$
It follows that both the limit and pre-limit expressions in \eqref{eq8.G} vanish unless $(s(Y),s(Y'))=(X,X')$. Thus,  we may assume that $(Y,Y')=(s^{-1}(X), s^{-1}(X'))$, and then we have
$$
(\T[M](s)\one_{Y,Y'},\one_{X,X'})=\int_{\om\in C(X,X')} \sqrt{\dfrac{{}^s\!M}{M}(\om)} M(d\om)
$$
and likewise
$$
(\T[M_N](s)\one_{Y,Y'},\one_{X,X'})=\int_{\om\in C(X,X')} \sqrt{\dfrac{{}^s\!M_N}{M_N}(\om)} M_N(d\om)
$$

Now we use the bijection $C(X,X')\to \Om(\X\setminus Z)$ in order to pass to integrals over $\Om(\X\setminus Z)$. The pushforwards, under this bijection, of the measures $M|_{C(X,X')}$ and ${}^s\!M|_{C(X,X')}$ are the measures $M(C(X,X'))\cdot  M_{X,X'}$ and $M(C(Y,Y'))\cdot  M_{Y,Y'}$, respectively. This enables us to rewrite
$$
\int_{\om\in C(X,X')} \sqrt{\dfrac{{}^s\!M}{M}(\om)} M(d\om)=\sqrt{\dfrac{M(C(Y,Y'))}{M(C(X,X'))}}\int_{\om\in\Om(\X\setminus Z)} \sqrt{\dfrac{M_{Y,Y'}}{M_{X,X'}}(\om)} M_{X,X'}(d\om).
$$
Likewise,
\begin{multline*}
\int_{\om\in C(X,X')} \sqrt{\dfrac{{}^s\!M_N}{M_N}(\om)} M_N(d\om)\\
=\sqrt{\dfrac{M_N(C(Y,Y'))}{M_N(C(X,X'))}}\int_{\om\in\Om(\X\setminus Z)} \sqrt{\dfrac{(M_N)_{Y,Y'}}{(M_N)_{X,X'}}(\om)} (M_N)_{X,X'}(d\om).
\end{multline*}

Next, since $M_N\to M$ in the weak topology, we have 
$$
\sqrt{\dfrac{M_N(C(Y,Y'))}{M_N(C(X,X'))}} \to \sqrt{\dfrac{M(C(Y,Y'))}{M(C(X,X'))}}.
$$
Therefore, the limit relation in question takes the form 
$$
\lim_{N\to\infty}\int_{\om\in\Om(\X\setminus Z)} \sqrt{\dfrac{(M_N)_{Y,Y'}}{(M_N)_{X,X'}}(\om)} (M_N)_{X,X'}(d\om)= \int_{\om\in\Om(\X\setminus Z)} \sqrt{\dfrac{M_{Y,Y'}}{M_{X,X'}}(\om)} M_{X,X'}(d\om)
$$

On the other hand, we know (see  \eqref{eq8.A} and \eqref{eq8.B}) that
$$
\dfrac{M_{Y,Y'}}{M_{X,X'}}=\PPsi_{|a|^2, M_{X,X'}}, \qquad \dfrac{(M_N)_{Y,Y'}}{(M_N)_{X,X'}}=\PPsi_{|a|^2, (M_N)_{X,X'}},
$$
where the function $a(\ccdot)$ on $\X\setminus Z$ is given by \eqref{eq8.A1}. Therefore, the desired limit relation takes the form
\begin{equation}\label{eq8.H}
\lim_{N\to\infty}\EE_{\mathcal M_N}(\PPsi^{1/2}_{|a|^2,\, \mathcal M_N})=\EE_{\mathcal M}(\PPsi^{1/2}_{|a|^2, \,\mathcal M}),
\end{equation}
where we abbreviated $\mathcal M_N:=(M_N)_{X,X'}$ and $\mathcal M:=M_{X,X'}$. 

Finally, we deduce \eqref{eq8.H} from Lemma \ref{lemma7.A}.  Recall that $(L_N)_{X,X'}\to L_{X,X'}$. This just means that the projections $[(L_N)_{X,X'}]$ converge to the projection $[L_{X,X'}]$. On the other hand, 
$$
\mathcal M_N=M^{[(L_N)_{X,X'}]}, \quad \mathcal M=M^{[L_{X,X'}]}.
$$
Therefore, \eqref{eq8.H} follows from Lemma \ref{lemma7.A}. 
\end{proof}

\subsection{Examples of perfect limit measures}\label{sect8.2}

Theorems \ref{thm5.A} and \ref{thm8.A} are results of general character, and now we turn to concrete examples. Let us focus on hypergeometric and $q$-hypergeometric orthogonal polynomials listed in the Askey scheme and $q$-Askey scheme \cite{KLS-2010}. Recall that Theorem \ref{thm5.A} is applicable to systems of discrete orthogonal polynomials with a constraint on the support $\X$ of the weight function ($\X$ must have finite intervals). This constraint cuts out some systems with infinite $\X$ (for instance, big $q$-Jacobi polynomials). The systems satisfying the constraint are the following. 

\smallskip

$\bullet$ \emph{Infinite $\X$}: Meixner, Charlier, little $q$-Jacobi, little $q$-Laguerre, alternative $q$-Charlier, $q$-Charlier, Al-Salam-Carlitz II. 

\smallskip

$\bullet$ \emph{All systems with finite $\X$}:  Racah, Hahn, Krawtchouk,  $q$-Racah, $q$-Hahn, dual $q$-Hahn, quantum $q$-Krawtchouk, $q$-Krawtchouk, affine $q$-Krawtchouk, dual $q$-Krawtchouk.

\smallskip

The key property of hypergeometric and $q$-hypergeometric polynomials is that they are eigenfunctions of some second order difference (or $q$-difference) operators. This property underlies a simple operator method, which allows one to compute large-$N$ limits of Christoffel-Darboux kernels without appeal to complicated asymptotic formulas for orthogonal polynomials, see \cite{BO-2007}, \cite{Ols-2008}. The method was developed in detail in \cite{BO-2017}. As shown in that paper, one can obtain by this method three families of determinantal measures with the correlation kernels on $\Z_{\ge0}\times\Z_{\ge0}$ of the form
\begin{equation}\label{eq8.I}
K^+_r(x,y)=\int_{r}^{+\infty} \wt{\mathcal P}_x(t)\wt{\mathcal P}_y(t)\mathcal W(dt), \qquad K^-_r(x,y)=\int_{-\infty}^r \wt{\mathcal P}_x(t)\wt{\mathcal P}_y(t)\mathcal W(dt),
\end{equation}
where $x,y\in\Z_{\ge0}$; $\{\wt{\mathcal P}_x: x=0,1,2,\dots\}$ is one of the three systems of classical continuous orthogonal polynomials --- Hermite, Laguerre or Jacobi (the polynomials $\wt P_x$ are assumed to be orthonormal and with positive leading coefficients); $\mathcal W$ is the corresponding weight function; $r$ is a parameter, which is assumed to be inside the support of $\mathcal W$. These kernels are called the \emph{discrete Hermite, Laguerre, and Jacobi kernels}, respectively. By the very definition, these are projection kernels.

For the kernels $K^\pm_r(x,y)$ there exists an alternative explicit formula applicable for $x\ne y$. For instance, in the Hermite case it looks as follows:
$$
K^\pm_r(x,y)=\mp(\pi x!y! 2^{x+y+2})^{-1/2}e^{-r^2}\frac{H_{x+1}(r)H_y(r)-H_x(r)H_{y+1}(r)}{x-y}, 
$$
where $H_x(\ccdot)$ denotes the Hermite polynomial of degree $x=0,1,2,\dots$, in the usual standardization, as in \cite{Er2} and \cite{Sz}. 

Note that the discrete Laguerre kernel involves an additional continuous parameter and the discrete Jacobi kernel involves two additional parameters, see \cite[Section 3]{BO-2017}. However, for the sake of brevity, we suppress the additional parameters and use a uniform notation $K^\pm_r$ for all three families. Let $M^\pm_r$ denote the corresponding determinantal measures on\/ $\Om=\{0,1\}^{\Z_{\ge0}}$.

\begin{theorem}\label{thm8.B}
{\rm(i)}  The determinantal measures $M^\pm_r$ with the discrete Hermite, discrete Laguerre or discrete Jacobi kernels are perfect measures in the sense of Definition \ref{def4.B}.  

{\rm(ii)} The canonical correlation kernels of the measures $M^+_r$ and $M^-_r$ have the following form.

$\bullet$ In the case of Hermite and Jacobi, these are $K^+_r(x,y)$ and $(-1)^{x-y}K^-_r(x,y)$, respectively. 

$\bullet$ In the case of Laguerre, these are $(-1)^{x-y}K^+_r(x,y)$ and $K^-_r(x,y)$, respectively. 
\end{theorem}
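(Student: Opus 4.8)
The plan is to realise, within each of the three families, one of the two measures directly as a large-$N$ limit by means of Theorem~\ref{thm8.A}, and then to obtain its companion through the particle/hole involution of Section~\ref{sect4.5}. Since $\X=\Z_{\ge0}$, the bijection $\nu$ of that section is the identity, so the involution of Proposition~\ref{prop4.E} reads $K^\circ(x,y)=\de_{xy}-(-1)^{x-y}K(x,y)$. The one relation tying the two kernels together is the orthonormality of the polynomials,
\begin{equation*}
K^+_r(x,y)+K^-_r(x,y)=\int_{-\infty}^{+\infty}\wt{\mathcal P}_x(t)\,\wt{\mathcal P}_y(t)\,\mathcal W(dt)=\de_{xy},
\end{equation*}
so that $K^-_r=1-K^+_r$ as operators and the determinantal measures $M^+_r$ and $M^-_r$ are particle/hole duals of one another.

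First I would single out the kernel that appears as a genuine large-$N$ limit: for the Hermite and Jacobi families this is $K^+_r$, while for the Laguerre family it is $K^-_r$, the orientation being fixed by the limit construction of \cite[Section~3]{BO-2017}. Write $K$ for the chosen kernel, $L:=\operatorname{Ran}K$, and $M:=M^K$, and check the three hypotheses of Theorem~\ref{thm8.A}. Condition~1 is immediate, since $\sum_{x\ge0}(1+x^2)^{-1}<\infty$. Condition~3 is precisely what \cite{BO-2017} provides: the kernels \eqref{eq8.I} are produced there as pointwise limits of Christoffel--Darboux kernels $K_N$ of $N$-point discrete orthogonal polynomial ensembles on suitable sets $\X_N$, and in each case either $\X_N$ is infinite or $|\X_N|-N\to+\infty$, which also secures Condition~3a.

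The genuinely new point is Condition~2, the $(X,X')$-regularity of $L$ for every pair of finite disjoint subsets. Here I would pass through the unitary map $U\colon\ell^2(\Z_{\ge0})\to L^2(\mathcal W)$ that sends $e_x$ to $\wt{\mathcal P}_x$. Under $U$ the projections $K^+_r$ and $K^-_r$ become multiplication by the indicators of $[r,+\infty)$ and $(-\infty,r)$, so both $U(L)$ and $U(L^\perp)$ consist of functions in $L^2(\mathcal W)$ vanishing on a half-line determined by $r$. Now any $v\in E_{X'}$ is carried by $U$ to a polynomial $p=\sum_{x\in X'}c_x\wt{\mathcal P}_x$; since $r$ lies in the interior of the support of $\mathcal W$, on which $\mathcal W$ has a strictly positive density, a polynomial lying in $U(L)$ would have to vanish on a nondegenerate interval, hence vanish identically, forcing $v=0$. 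This gives $L\cap E_{X'}=\{0\}$, and the symmetric argument applied to $U(L^\perp)$ and $E_X$ gives $L^\perp\cap E_X=\{0\}$. Thus $L$ is $(X,X')$-regular for all finite disjoint $X,X'$, Theorem~\ref{thm8.A} applies, and $M$ is perfect with canonical kernel $K$.

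It remains to treat the companion measure. Because $K^-_r=1-K^+_r$, the particle/hole involution interchanges $M^+_r$ and $M^-_r$, so Proposition~\ref{prop4.E} shows that the companion is again perfect, with canonical kernel $\de_{xy}-(-1)^{x-y}K(x,y)$. Substituting $K^+_r+K^-_r=\de_{xy}$ collapses this expression to $(-1)^{x-y}$ times the complementary kernel: starting from $K=K^+_r$ one obtains $(-1)^{x-y}K^-_r(x,y)$ (the Hermite and Jacobi case), and starting from $K=K^-_r$ one obtains $(-1)^{x-y}K^+_r(x,y)$ (the Laguerre case), exactly as stated in parts~(i) and~(ii). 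The main obstacle is Condition~2 of Theorem~\ref{thm8.A}: the remaining ingredients are either quoted from \cite{BO-2017} or are routine sign bookkeeping through Proposition~\ref{prop4.E}, whereas regularity rests on the analytic fact that a nonzero polynomial cannot vanish on a set of positive $\mathcal W$-measure.
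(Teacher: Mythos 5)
Your proposal is correct and follows essentially the same route as the paper: verify conditions 1--3 of Theorem \ref{thm8.A} (condition 2 by the argument that a nonzero polynomial cannot vanish on a nondegenerate subinterval of the support of $\mathcal W$, condition 3 by citing the Charlier/Meixner/Racah limit transitions of \cite{BO-2017}), then pass to the complementary kernels via the particle/hole involution and Proposition \ref{prop4.E}. The only differences are presentational: you phrase the regularity check through the unitary onto $L^2(\mathcal W)$ rather than through weighted sequences, and you write out explicitly the sign identity $\de_{xy}-(-1)^{x-y}K^\pm_r(x,y)=(-1)^{x-y}K^\mp_r(x,y)$, which the paper leaves implicit.
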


\begin{proof}
Let us check conditions 1-3 formulated in the beginning of \S \ref{sect8.1}. Condition 1 holds because the series  $\sum_{x=0}^\infty(1+x^2)^{-1}$ converges.  

Condition 2 means that $L$ and $L^\perp$ must have trivial intersection with any finite-dimensional linear subspace of the form $E_X\subset E$, where $X$ is a finite subset of $\X=\Z_{\ge0}$. In our situation $E$ is the coordinate Hilbert space $\ell^2(\Z_{\ge0})$, that is, the space of square summable sequences $(c_0,c_1,\dots)$ of complex numbers. The subspaces $L$ and $L^\perp$ consist of sequences satisfying one of the two conditions 
$$
\mathcal W(t)^{1/2}\sum_{n=0}^\infty c_n \wt P_n(t)\equiv 0 \quad \text{for all $t\ge r$ or for all $t\le r$}.
$$
Neither of them can hold for a nontrivial \emph{finite} linear combination of polynomials $\wt P_n(t)$, because a nonzero polynomial cannot vanish identically on an interval of $\R$. This proves that condition 2 is satisfied. 

Condition 3 (approximation by orthogonal polynomial ensembles) is established in \cite{BO-2017}. In more detail, the following limit transition hold:

$\bullet$   $N$-particle Charlier ensembles $\to$ discrete Hermite kernel measures with parameters $(+,r)$,  \cite[Theorem 6.1]{BO-2017};

$\bullet$  $N$-particle Meixner ensembles $\to$ discrete Laguerre kernel measures with parameters $(-,r)$,    \cite[Theorem 6.2]{BO-2017};

$\bullet$  $N$-particle Racah ensembles $\to$  discrete Jacobi kernel measures with parameters $(+,r)$, \cite[Theorem 6.6]{BO-2017}.  

Finally, it remains to handle the measures with the complementary kernels, that is, the discrete Hermite and Jacobi kernels with parameters $(-,r)$ and the discrete Laguerre kernel with parameters $(+,r)$, and this is done with the aid of  Proposition \ref{prop4.E}. 
\end{proof}

One more example is the determinantal measure $M^\dSine_\phi\in\PP(\{0,1\}^\Z)$ defined by the  \emph{discrete sine kernel} on $\Z\times\Z$. Recall the formula for that kernel:
\begin{equation}\label{eq8.J}
K^\dSine_\phi(x,y):=\begin{cases} \dfrac{\sin( \phi\,(x-y))}{\pi(x-y)},  & x\ne y,\\
\phi/\pi,  & x=y, \end{cases}
\end{equation}
where $\phi\in(0,\pi)$ is a parameter. The corresponding operator on $\ell^2(\Z)$, denoted by $K^\dSine_\phi$, is a projection operator: under the Fourier transform $\ell^2(\Z)\to L^2(\mathbb T)$ (where $\mathbb T$ denotes the unit circle equiped with the uniform measure), $K^\dSine_\phi$ turns into the operator of multiplication by $\one_{(-\phi,\phi)}$ --- the characteristic function of the arc of length $2\phi$ centered at the point $1\in\mathbb T$.

\begin{theorem}\label{thm8.C}
The determinantal measure $M^\dSine_\phi\in\PP(\{0,1\}^\Z)$ with the correlation kernel kernel $K^\dSine_\phi(x,y)$, is perfect, and $K^\dSine_\phi(x,y)$ is its canonical correlation kernel. 
\end{theorem}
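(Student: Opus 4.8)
The plan is to deduce Theorem \ref{thm8.C} from the general limit result, Theorem \ref{thm8.A}, in exactly the way Theorem \ref{thm8.B} was proved: I check that $\X=\Z$ together with the projection $K=K^\dSine_\phi$ satisfies conditions 1--3 of \S\ref{sect8.1}, and then Theorem \ref{thm8.A} yields at once the $\SS$-quasi-invariance of $M^\dSine_\phi$ and the equality $\tau[M^\dSine_\phi]=\varphi[K^\dSine_\phi]$, which is precisely the assertion that $M^\dSine_\phi$ is perfect (Definition \ref{def4.B}) with canonical kernel $K^\dSine_\phi$. Unlike in Theorem \ref{thm8.B}, no particle/hole involution is invoked, so the canonical kernel comes out directly as $K^\dSine_\phi(x,y)$, with no accompanying sign factor.

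Conditions 1 and 2 are routine. Condition 1 holds because $\sum_{x\in\Z}(1+|x|^2)^{-1}<\infty$; in particular $(\Z,<)$ has finite intervals. For condition 2 I would pass to the Fourier picture described after \eqref{eq8.J}: the unitary $\ell^2(\Z)\to L^2(\mathbb T)$ sending $e_x$ to the character $\theta\mapsto e^{ix\theta}$ carries $L:=K^\dSine_\phi\ell^2(\Z)$ onto the subspace of functions supported on the arc $(-\phi,\phi)$, and $L^\perp$ onto the functions supported on its complement; both arcs have positive length since $0<\phi<\pi$. A nonzero vector of $E_X$ becomes a nonzero trigonometric polynomial, which cannot vanish on a set of positive measure; hence $L\cap E_{X'}=\{0\}$ and $L^\perp\cap E_X=\{0\}$ for all finite disjoint $X,X'$, so $L$ is $(X,X')$-regular. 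This is the exact analogue of the condition-2 argument in Theorem \ref{thm8.B}, with ``nonzero polynomial'' replaced by ``nonzero trigonometric polynomial''.

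The substantive step is condition 3: I must realize $K^\dSine_\phi$ as a pointwise limit of $N$-point discrete orthogonal polynomial ensemble kernels on nested sets $\X_N\uparrow\Z$. Since the discrete sine kernel is the universal bulk scaling limit of discrete orthogonal polynomial ensembles, I would take a classical family --- Charlier, Meixner, or Krawtchouk (allowing the weight parameters to depend on $N$) --- with $N$ points, and recenter it at a bulk position $c_N$ where the limiting particle density equals $\phi/\pi$. Setting $\X_N$ to be the support shifted by $-c_N$ and letting $c_N\uparrow\infty$ (with the block growing so that $|\X_N|-N\to\infty$ in the finite Krawtchouk case, or simply using the infinite supports $\Z_{\ge0}-c_N$ for Charlier/Meixner) makes the $\X_N$ nested with union $\Z$, giving condition 3a. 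The bulk-universality theorem of Baik--Kriecherbauer--McLaughlin--Miller \cite{BKMM-2007} (see also \cite{BOO}) then provides the pointwise convergence of the recentered kernels $K_N(x,y)$ to the discrete sine kernel, which is condition 3b.

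The delicate part of this last step --- and the main obstacle --- is to ensure that the limit is exactly $K^\dSine_\phi(x,y)$ rather than a gauge-equivalent variant such as $(-1)^{x-y}K^\dSine_\phi(x,y)$: the two define the same measure $M^\dSine_\phi$, but only one can be the canonical kernel, which by Remark \ref{rem4.E} must be real valued and which Theorem \ref{thm8.A} identifies with the actual limit of the approximating kernels. This is controlled by the position of the limiting Fermi arc on $\mathbb T$: one must recenter so that the arc is symmetric about $1\in\mathbb T$, producing the real translation-invariant kernel $\sin(\phi(x-y))/(\pi(x-y))$, and one must verify that every $\phi\in(0,\pi)$ is attainable as the bulk density $\phi/\pi$ ranges over $(0,1)$. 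Once condition 3 is secured with $K=K^\dSine_\phi$, Theorem \ref{thm8.A} applies verbatim and completes the proof.
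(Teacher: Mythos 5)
Your proposal follows the paper's skeleton exactly --- apply Theorem \ref{thm8.A} after verifying conditions 1--3 of \S\ref{sect8.1} --- and your treatment of conditions 1 and 2 (a nonzero trigonometric polynomial cannot vanish on an arc of positive length) is literally the paper's argument. Where you genuinely diverge is condition 3. The paper does not invoke bulk universality: it takes the Charlier ensemble with \emph{fixed} parameter $\theta=1$, shifts the lattice by $s_N=[N+(2\cos\phi)\sqrt N]$ (so $\X_N=\{-s_N,-s_N+1,\dots\}$, nested because $s_N$ is nondecreasing in $N$), and uses the operator method of \cite{BO-2017}, \cite{Ols-2008}: the $N$th Christoffel--Darboux kernel is the spectral projection onto the positive part of the spectrum of an explicit three-term difference operator $D_N$, and after the shift and division by $\sqrt{s_N}$ these operators converge coefficient-wise to $Df(x)=f(x+1)+f(x-1)-2\cos\phi\, f(x)$, whose positive-spectrum projection is, in the Fourier picture, multiplication by $\one_{(-\phi,\phi)}$, i.e.\ exactly $K^\dSine_\phi$. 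This difference matters precisely at the point you yourself flag as the main obstacle: on your route, ruling out gauge variants such as $(-1)^{x-y}K^\dSine_\phi(x,y)$ (the projection onto the arc centered at $-1\in\mathbb T$, which generates the same measure $M^\dSine_\phi$ but is a different kernel) must be extracted from the precise statement of a universality theorem, whereas on the paper's route the limiting projection is \emph{computed} rather than matched, so the question never arises. A further caveat about your references: \cite{BKMM-2007} treats finite lattices with $N$-dependent weights, so it does not literally cover the fixed-parameter Charlier ensemble; to make your version airtight you should either run it with Krawtchouk ensembles with $N$-dependent parameters (which \cite{BKMM-2007} does cover, taking care that $|\X_N|-N\to+\infty$ and that the shifted lattices are nested), or prove the Charlier bulk asymptotics directly --- which is, in effect, what the paper's operator computation accomplishes without any asymptotic analysis of the polynomials.
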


\begin{proof}
As in the previous theorem, we apply Theorem \ref{thm8.A}, and our task is to check again conditions 1-3. Condition 1 is obvious. Condition 2 follows from the fact that a trigonometric polynomial cannot vanish identically on an arc. For condition 3 we need to find an appropriate approximation by discrete orthogonal polynomial ensembles. This can be done in various ways; apparently the simplest one is to use the Charlier polynomials. These are orthogonal polynomials on $\Z_{\ge0}$ with the weight function $W(x)=\theta^x/x!$, where $\theta>0$ is a parameter. For our purpose any value of $\theta$ is suitable; let us take $\theta=1$. 

The rest of the proof consists in application of the algorithm described in \cite{BO-2017}. We easily find a difference operator $D_N$ on $\Z_{\ge0}$ such that the $N$th Charlier kernel $K_N(x,y)$ corresponds to the projection on the positive part of the spectrum of $D_N$. Namely, $D_N$ acts on a test function $f(x)$ by
$$
D_N f(x)=\sqrt{x+1} f(x+1)+\sqrt x f(x-1)+(-x+N-1)f(x),
$$
cf. \cite[\S6.1]{BO-2017}. Then we replace $x$ by $x+s_N$, where $s_N$ is a large positive integer depending on $N$ ($s_N$ will be specified shortly), which amounts to shifting the lattice $\Z_{\ge0}$ to the left by $s_N$. Thus, in the notation of Theorem \ref{thm8.A}, we have $\X_N=\{-s_N, -s_N+1,-s_N+2, \dots\}$. Finally, we divide the operator by $\sqrt{s_N}$, which does affect the spectral projection corresponding to the positive part of the spectrum. The resulting difference operator has the form 
$$
\wt D_N f(x)=\sqrt{\dfrac{x+1+s_N}{s_N}} f(x+1)+\sqrt{\dfrac{x+s_N}{s_N}}  f(x-1)+\frac{-x+N-1}{\sqrt{s_N}}f(x).
$$

Now we can specify the shifts $s_N$: we set $s_N:=[N+(2\cos\phi)\sqrt N]$, where $[\ccdot]$ denotes the integer part.  Under this choice, $\wt D_N$ converges coefficient-wise to the difference operator 
$$
D f(x)= f(x+1)+f(x-1)-2\cos\phi f(x),
$$
which is just we need. 

Indeed, the spectral projection on the positive part of the spectrum of $D$ is precisely the operator $K^\dSine_\phi$ (see, e.g., \cite{Ols-2008}).  Next, we have 
$$
(N+1+(2\cos\phi)\sqrt{N+1})-(N+(2\cos\phi)\sqrt N)=1+\frac{2\cos\phi}{\sqrt{N(N+1)}}.
$$
Since $2\cos\phi>-2$, the right-hand side is positive for any $\phi$ (excluding only $N=1$), so that $s_N\le s_{N+1}$ and hence the condition $\X_N\subseteq\X_{N+1}$ is satisfied. 

This completes the proof. 
\end{proof}

\section{Basic facts about quasifree states}\label{sect9}

Let $E$ be a separable complex Hilbert space. Recall the notation  $\Gr(E)$ for the set of closed linear subspaces of $E$. 
Next, let $\mathscr C_2(E)$ be the set of Hilbert-Schmidt operators on $E$ and $U_2(E)$ be the group of unitary operators that differ from the identity by a Hilbert-Schmidt operator. The group $U_2(E)$ acts on $\Gr(E)$ in a natural way.  

\begin{proposition}\label{prop9.A}
Let $L_1,L_2\in\Gr(E)$ and $K_1,K_2$ denote the corresponding projection operators. The following two conditions are equivalent:

{\rm(i)} $L_1$ and $L_2$ lie on the same $U_2(E)$-orbit;

{\rm(ii)} $K_1-K_2\in\mathscr C_2(E)$ and the operator $K_2K_1: L_1\to L_2$ has index\/ $0$.
\end{proposition}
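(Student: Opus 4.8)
The plan is to prove the two implications separately, the reverse implication (ii) $\Rightarrow$ (i) being the substantial one. Throughout I set $D:=K_1-K_2$ and record the elementary identities $(1-K_2)K_1=(1-K_2)D$ and $K_2(1-K_1)=-D(1-K_1)$ (both follow from $K_iK_i=K_i$); they show that as soon as $D\in\mathscr{C}_2(E)$, the ``off\/-diagonal'' products $(1-K_2)K_1$ and $K_2(1-K_1)$ are Hilbert--Schmidt as well. For the easy direction (i) $\Rightarrow$ (ii), given $u\in U_2(E)$ with $uL_1=L_2$ I would write $K_2=uK_1u^{*}$ and, putting $a:=u-1\in\mathscr{C}_2(E)$, expand $K_1-K_2=K_1-uK_1u^{*}=-(aK_1+K_1a^{*}+aK_1a^{*})\in\mathscr{C}_2(E)$. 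For the index, I note that on $L_1$ the map $K_2K_1=K_2|_{L_1}\colon L_1\to L_2$ differs from the \emph{invertible} operator $u|_{L_1}\colon L_1\to L_2$ by $K_2v-uv=-K_2(u-1)v$, a Hilbert--Schmidt (hence compact) perturbation; an invertible operator plus a compact one is Fredholm of index $0$.

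For (ii) $\Rightarrow$ (i) the key device is the operator
\[
T:=K_2K_1+(1-K_2)(1-K_1).
\]
First I would compute $T-1=-(1-K_2)K_1-K_2(1-K_1)$, which by the identities above lies in $\mathscr{C}_2(E)$; hence $T$ is a compact perturbation of the identity and so is Fredholm of index $0$. Next, $Tv=K_2v$ for $v\in L_1$ and $Tw=(1-K_2)w$ for $w\in L_1^{\perp}$, so $T$ maps $L_1\to L_2$ and $L_1^{\perp}\to L_2^{\perp}$. With respect to the two orthogonal decompositions $E=L_1\oplus L_1^{\perp}$ and $E=L_2\oplus L_2^{\perp}$ the operator $T$ is therefore block-diagonal, $T=A\oplus B$ with $A=K_2K_1|_{L_1}$ and $B=(1-K_2)(1-K_1)|_{L_1^{\perp}}$. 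Since a block-diagonal Fredholm operator has Fredholm blocks with $\operatorname{ind}A+\operatorname{ind}B=\operatorname{ind}T=0$, and hypothesis (ii) furnishes $\operatorname{ind}A=0$, I also get $\operatorname{ind}B=0$.

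It remains to upgrade $T$ to a unitary. I would take the polar decomposition $T=W|T|$, $|T|=(T^{*}T)^{1/2}$. From $T-1\in\mathscr{C}_2(E)$ one gets $T^{*}T-1\in\mathscr{C}_2(E)$, and then $|T|-1=(T^{*}T-1)(|T|+1)^{-1}\in\mathscr{C}_2(E)$ because $(|T|+1)^{-1}$ is bounded; consequently $W-T=W(1-|T|)\in\mathscr{C}_2(E)$. As $|T|$ is a function of $T^{*}T$ it commutes with $K_1$ and $1-K_1$, so $\ker T$ and $\operatorname{coker}T$ split along the two decompositions into the kernels and cokernels of $A$ and $B$. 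Using $\operatorname{ind}A=\operatorname{ind}B=0$ I may choose \emph{finite-rank} partial unitaries identifying $\ker A$ with $\operatorname{coker}A\subset L_2$ and $\ker B$ with $\operatorname{coker}B\subset L_2^{\perp}$; adding them to $W$ yields a unitary $u$ with $uL_1=L_2$, $uL_1^{\perp}=L_2^{\perp}$, and $u-T$ of finite rank, whence $u-1\in\mathscr{C}_2(E)$, i.e. $u\in U_2(E)$ carries $L_1$ to $L_2$.

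The main obstacle is exactly this final patching step: the polar decomposition only produces a partial isometry, and one must spend the vanishing of \emph{both} block indices---which is precisely where hypothesis (ii) is consumed---to extend it to a genuine element of $U_2(E)$ that respects the splitting, all while keeping the correction Hilbert--Schmidt. The preliminary identities and the block-diagonality of $T$ are what make the index bookkeeping and the Hilbert--Schmidt estimates routine once the construction of $T$ is in place.
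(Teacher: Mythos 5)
Your proof is correct, and it is worth noting that it supplies something the paper itself does not: the paper's entire proof of this proposition is the citation ``See Str\u{a}til\u{a}--Voiculescu \cite[\S 3.10]{SV-1978}'', so your argument is a genuinely self-contained alternative rather than a reconstruction of the text. The route you take is the natural operator-theoretic one: the intertwining operator $T=K_2K_1+(1-K_2)(1-K_1)$ is a Hilbert--Schmidt perturbation of the identity, is block-diagonal with respect to $E=L_1\oplus L_1^\perp\to E=L_2\oplus L_2^\perp$, so index additivity together with $\operatorname{index}T=0$ converts the hypothesis $\operatorname{index}A=0$ into $\operatorname{index}B=0$; the polar decomposition respects the blocks because $T^*T=K_1K_2K_1+(1-K_1)(1-K_2)(1-K_1)$ commutes with $K_1$ (this one-line computation is the only point you leave implicit and it is worth writing out); and the vanishing of both block indices is exactly what lets you complete the partial isometry $W$ to a unitary by finite-rank pieces. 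Your (i)$\Rightarrow$(ii) direction, via $K_2|_{L_1}=u|_{L_1}-K_2(u-1)|_{L_1}$, is also fine. One small slip: your patched unitary $u$ differs from $W$ by a finite-rank operator, but $u-T$ is \emph{not} finite rank in general, only Hilbert--Schmidt, since $u-T=(u-W)+W(1-|T|)$ and $1-|T|\in\mathscr C_2(E)$ need not have finite rank; the conclusion $u-1\in\mathscr C_2(E)$ is unaffected, as you already know $W-T\in\mathscr C_2(E)$ and $T-1\in\mathscr C_2(E)$.
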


Note that  the first condition in (ii) implies that $K_2K_1: L_1\to L_2$ is a Fredholm operator, so that its index (the difference between the dimensions of the kernel and cokernel) is well defined. Note also that 
\begin{equation}\label{eq9.A}
\operatorname{index}(K_2K_1: L_1\to L_2)=\dim \Ker (K_2K_1\big|_{L_1})-\dim \Ker (K_1K_2\big|_{L_2}).
\end{equation}

\begin{proof}
See Str\u{a}til\u{a}--Voiculescu \cite[\S 3.10]{SV-1978}.
\end{proof}

\begin{theorem}\label{thm9.A}
Let, as above, $\X$ be a countable set, $\A^0$ be the gauge invariant subalgebra of the\/ $\CAR$ algebra $\A=\A(\X)$, and $E:=\ell^2(\X)$.

{\rm(i)} A quasifree state\/ $\varphi[K]$ on the algebra\/ $\A^0$ (see Definition \ref{def1.A}) is a pure state if and only if $K$ is a projection operator.

{\rm(ii)} Let $K_1$ and $K_2$ be two projection operators and $L_1$ and $L_2$ be their ranges. The states $\varphi[K_1]$ and $\varphi[K_2]$ are equivalent if and only if $L_1$ and $L_2$ lie on the same $U_2(E)$-orbit (equivalently, $K_1$ and $K_2$ satisfy the second condition of Proposition \ref{prop9.A}). 
\end{theorem}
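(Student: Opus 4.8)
The plan is to treat both parts as instances of the classical theory of gauge-invariant quasifree states, whose technical core is contained in Str\u{a}til\u{a}--Voiculescu \cite{SV-1978} (see also Powers--St{\o}rmer \cite{PS-1970}, Araki \cite{Araki}, Baker \cite{Baker}, and the monograph \cite{BR}); the role of the present proof is to organize the argument around the Fock/GNS picture and to translate the equivalence criterion into the orbit language of Proposition \ref{prop9.A}.

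For part (i), first I would analyze the GNS triple $(\pi_K,\HH_K,\xi_K)$ of $\varphi[K]$. When $K$ is the projection onto a closed subspace $L\subseteq E$, the plan is to realize $\pi_K$ explicitly as a Fock representation: split $E=L\oplus L^\perp$, declare the modes in $L$ occupied and those in $L^\perp$ empty, and build the corresponding antisymmetric Fock space with cyclic vacuum. A direct check against the determinantal formula of Definition \ref{def1.B} shows that the vacuum state is precisely $\varphi[K]$, and since this Fock representation of $\A^0$ is irreducible, $\varphi[K]$ is pure. For the converse I would argue contrapositively: if $K$ has a spectral value $\la\in(0,1)$, then restricting attention to a one-dimensional spectral subspace exhibits $\varphi[K]$ as a genuine convex combination of two distinct quasifree states (a mode occupied with probability $\la$ is a true mixture of ``full'' and ``empty''), so $\varphi[K]$ cannot be extremal. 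This is the step I expect to require the most care, since one must promote this local mixing into a global decomposition of the state on $\A^0$; the cleanest route is to invoke the analysis of \cite{SV-1978}, where the GNS representation of $\varphi[K]$ is shown to be irreducible (equivalently, to have trivial commutant) if and only if $K$ is a projection.

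For part (ii), both states are pure by (i), so their equivalence amounts to unitary equivalence of the irreducible Fock representations $\pi_{K_1}$ and $\pi_{K_2}$. The plan is to relate $\pi_{K_2}$ to $\pi_{K_1}$ through the Bogoliubov transformation carrying the polarization $L_1$ to $L_2$, and to apply the Shale--Stinespring type implementability criterion: such a transformation is implementable, so that the two representations are quasi-equivalent, precisely when its off-diagonal block is Hilbert--Schmidt, which here is exactly the condition $K_1-K_2\in\mathscr C_2(E)$. The subtlety, and the main obstacle, is that on the gauge-invariant algebra $\A^0$ there is an additional $\Z$-valued obstruction beyond the Hilbert--Schmidt condition: the algebra is graded by total charge, and two Fock representations differing by a nonzero charge are disjoint even when the Hilbert--Schmidt condition holds. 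This charge defect is measured exactly by $\operatorname{index}(K_2K_1\colon L_1\to L_2)$ from \eqref{eq9.A}, so equivalence forces this index to vanish. Finally I would observe that the combined condition ``$K_1-K_2\in\mathscr C_2(E)$ and the index is $0$'' is, by Proposition \ref{prop9.A}, the same as saying that $L_1$ and $L_2$ lie on a single $U_2(E)$-orbit, which closes the equivalence. The genuinely delicate point throughout is the identification of the index as the \emph{complete} obstruction on $\A^0$; for this I would rely on \cite[\S 3.10]{SV-1978} rather than reprove it.
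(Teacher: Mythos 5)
Your proposal is correct and takes essentially the same route as the paper, whose entire proof of Theorem \ref{thm9.A} is the citation to Str\u{a}til\u{a}--Voiculescu \cite[\S 3.1, claims $2^\circ$ and $4^\circ$]{SV-1978}: at every genuinely delicate point (the converse direction in (i), the completeness of the Hilbert--Schmidt-plus-index obstruction in (ii)) you defer to the same source. The Fock/Bogoliubov outline you add is accurate standard background, but it functions as exposition around that citation rather than as an independent argument.
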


\begin{proof}
See Str\u{a}til\u{a}--Voiculescu \cite[\S 3.1, claims $2^\circ$ and $4^\circ$]{SV-1978}
\end{proof}

Note that a similar result is formulated in Baker \cite[p. 38]{Baker} without proof --- with reference to the unpublished thesis of G. Stamatopoulos (University of Pennsylvania, 1974).  In \cite{Baker},  the criterion of equivalence is given in the following form: 
\begin{equation}\label{eq9.B}
\tr((1-K_1)K_2(1-K_1))=\tr((1-K_2)K_1(1-K_2))<\infty.
\end{equation}
The fact that condition \eqref{eq9.B} is equivalent to the second condition of Proposition \ref{prop9.A} can be readily deduced from a nice classical result on the relative position of two linear subspaces in a Hilbert space, to which proceed now. 

Let $L_1, L_2\in\Gr(E)$,  and $K_1,K_2$ be the corresponding projection operators. The space $E$ can be decomposed into a direct sum of five subspaces,
\begin{equation}\label{eq9.C}
(L_1\cap L_2)\oplus(L_1\cap L_2^\perp)\oplus (L_1^\perp\cap L_2)\oplus(L_1^\perp\cap L_2^\perp)\oplus L,
\end{equation}
where, as before,  $(\ccdot)^\perp$ denotes the orthogonal complement to a given subspace and $L$ is the orthogonal complement to the sum of the first four subspaces. Each of these five subspaces is invariant under the action of the projections $K_1$ and $K_2$. Their action on the first four subspaces is evident, and the action on $L$ can be described in the following way.

\begin{proposition}\label{prop9.B}
The triple $(L, K_1,K_2)$ is unitarily equivalent to a triple of the form $(V\oplus V, K'_1,K'_2)$, where $V$ is a Hilbert space and $K'_1$ and $K'_2$ are two projections written in the block form as
\begin{equation}\label{eq9.D}
K'_1=\begin{bmatrix} 1 & 0\\0 & 0\end{bmatrix}, \qquad K'_2=\begin{bmatrix}  C^2 & CS \\ CS & S^2\end{bmatrix},
\end{equation}
where $C$ and $S$ are two commuting selfadjoint operators on $V$ such that $0\le C\le1$, $0\le S\le 1$, $C^2+S^2=1$, and $\Ker C=\Ker S=0$. 
\end{proposition}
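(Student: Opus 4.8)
The plan is to recognize Proposition \ref{prop9.B} as the classical theorem of Halmos on the relative position of two orthogonal projections, specialized to the ``generic'' summand $L$, and to prove it by producing the unitary equivalence explicitly. Throughout I abbreviate $P:=K_1\big|_L$ and $Q:=K_2\big|_L$, the restrictions of the two projections to the invariant subspace $L$. By the very definition of $L$ as the orthogonal complement of the four intersection subspaces in \eqref{eq9.C}, the pair $(P,Q)$ is in \emph{generic position}: each of the four subspaces $\operatorname{ran}P\cap\operatorname{ran}Q$, $\operatorname{ran}P\cap\operatorname{ran}(1-Q)$, $\operatorname{ran}(1-P)\cap\operatorname{ran}Q$, $\operatorname{ran}(1-P)\cap\operatorname{ran}(1-Q)$ reduces to $\{0\}$. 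I set $V:=\operatorname{ran}P=L\cap L_1$; the target unitary will carry $V\oplus 0$ onto $V$ and $0\oplus V$ onto $\operatorname{ran}(1-P)$.

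First I would introduce the positive contraction $T:=PQP\big|_V$ on $V$ and check that $\Ker T=\{0\}$ and $\Ker(1-T)=\{0\}$. Both follow from genericity: for $v\in V$ one has $\|Qv\|^2=\langle Tv,v\rangle$, so $Tv=0$ forces $Qv=0$, i.e. $v\in\operatorname{ran}P\cap\operatorname{ran}(1-Q)=\{0\}$; and $Tv=v$ forces $\|Qv\|=\|v\|$, hence $Qv=v$ and $v\in\operatorname{ran}P\cap\operatorname{ran}Q=\{0\}$. I then put $C:=T^{1/2}$ and $S:=(1-C^2)^{1/2}$; these are commuting selfadjoint contractions with $C^2+S^2=1$, and $\Ker C=\Ker C^2=\Ker T=\{0\}$ while $\Ker S=\Ker S^2=\Ker(1-T)=\{0\}$.

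The heart of the argument is the construction of the isometry $J\colon V\to\operatorname{ran}(1-P)$. I would look at the off-diagonal block $B:=(1-P)QP\big|_V$ and compute $B^*B=PQ(1-P)QP\big|_V=PQP-(PQP)^2=C^2-C^4=(CS)^2$, so that $|B|=CS$. Its polar decomposition $B=J\,CS$ furnishes a partial isometry $J$ whose initial space is $\overline{\operatorname{ran}(CS)}=V$, because $CS$ is injective with dense range. \textbf{The main obstacle is to verify that $J$ is surjective onto $\operatorname{ran}(1-P)$}, equivalently that $\overline{\operatorname{ran}B}=\operatorname{ran}(1-P)$. This is exactly where the remaining two genericity conditions enter: if $w\in\operatorname{ran}(1-P)$ is orthogonal to $\operatorname{ran}B$, then $\langle PQw,v\rangle=\langle w,QPv\rangle=0$ for all $v\in V$, whence $PQw=0$; thus $Qw\in\operatorname{ran}(1-P)\cap\operatorname{ran}Q=\{0\}$, so $w\in\operatorname{ran}(1-P)\cap\operatorname{ran}(1-Q)=\{0\}$. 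Hence $J$ is a unitary from $V$ onto $\operatorname{ran}(1-P)$.

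Finally I would set $U:=\operatorname{diag}(1,J)\colon V\oplus V\to V\oplus\operatorname{ran}(1-P)=L$ and read off the matrices with respect to the decomposition $L=\operatorname{ran}P\oplus\operatorname{ran}(1-P)$. Conjugation by $U$ sends $P$ to $K_1'$ trivially. For $Q$, the block $PQP$ equals $C^2$, and, using $B=J\,CS$ together with $J^*J=1$ and the selfadjointness of $CS$, the off-diagonal blocks become $J^*B=CS$ and $B^*J=CS$; so $U^*QU=\begin{bmatrix}C^2 & CS\\ CS & X\end{bmatrix}$ with $X:=J^*(1-P)Q(1-P)J$ selfadjoint. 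To identify $X=S^2$ I would exploit that $U^*QU$ is again a projection: squaring and equating the $(1,2)$-block gives $CS\,X=CS\,S^2$, and since $CS$ is injective this yields $X=S^2$, matching \eqref{eq9.D}. The commutation of $C$ and $S$, the bounds $0\le C\le 1$, $0\le S\le1$, the relation $C^2+S^2=1$, and the triviality of their kernels have all been recorded, so the proof is complete.
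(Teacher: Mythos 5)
Your proof is correct, and it is worth noting that the paper itself offers no argument for Proposition \ref{prop9.B}: its ``proof'' consists of attributing the result to Dixmier and pointing to Halmos's \emph{Two subspaces} paper and the B\"ottcher--Spitkovsky survey. So your proposal supplies a complete, self-contained argument where the paper gives only a citation. Each step checks out: genericity of the pair $(P,Q)=(K_1\big|_L,K_2\big|_L)$ does follow from the definition of $L$ as the orthogonal complement of the four intersections in \eqref{eq9.C}; the kernel computations for $T=PQP\big|_V$ and $1-T$ correctly use the two identities $\langle Tv,v\rangle=\Vert Qv\Vert^2$ and ``$\Vert Qv\Vert=\Vert v\Vert$ forces $Qv=v$''; the identity $B^*B=PQP-(PQP)^2=(CS)^2$ is right; injectivity of $CS$ (hence density of its range) is what makes $J$ an isometry on all of $V$, and your orthogonality argument showing $\overline{\operatorname{ran}B}=\operatorname{ran}(1-P)$ is exactly where the remaining two genericity conditions are needed; finally, the identification of the $(2,2)$ block as $S^2$ via idempotency of $U^*QU$ and injectivity of $CS$ is clean and avoids computing $(1-P)Q(1-P)$ directly. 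Compared with Halmos's published proof, which represents $\operatorname{ran}Q$ as the graph of a closed, densely defined, injective operator with dense range from $\operatorname{ran}P$ to $\operatorname{ran}(1-P)$ and takes its (unbounded) polar decomposition, your route works only with the bounded block $B=(1-P)QP$ and its polar decomposition; this is arguably more elementary, at the modest cost of the extra step pinning down the $(2,2)$ block, which you handle correctly.
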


\begin{proof}
This result is due to Dixmier \cite{Dixmier}. An elegant proof is given in Halmos \cite{Halmos}.  See also the survey  B\"{o}ttcher--Spitkovsky \cite{BS-2010}.
\end{proof}

Note that the spectral types of $C$ and $S$ are invariants of the pair $L_1,L_2$, and they depend symmetrically on $L_1$ and $L_2$. This follows, for instance, from the identity
$$
(K'_1+K'_2-1)^2=\begin{bmatrix} C^2 & 0\\ 0 & C^2 \end{bmatrix}
$$
in \cite[p. 386]{Halmos}.  

Proposition \ref{prop9.B} leads to a useful reformulation of condition (ii) in Proposition \ref{prop9.A}:

\begin{proposition}\label{prop9.C}
Let, as above, $L_1,L_2\in\Gr(E)$ and $K_1,K_2$ be the corresponding projections. 

The condition $K_1-K_2\in\mathscr C_2(E)$ is equivalent to the following: first, the subspaces $L_1\cap L_2^\perp$ and $L_1^\perp\cap L_2$ have finite dimension and, second, $S\in \mathscr C_2(V)$. 

Next, if this holds true, then the index of the operator $K_2K_1: L_1\to L_2$ equals $0$ if and only if $\dim(L_1\cap L_2^\perp)=\dim(L_1^\perp\cap L_2)$. 
\end{proposition}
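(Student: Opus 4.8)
The plan is to carry out the whole analysis inside the canonical five-term orthogonal decomposition \eqref{eq9.C} furnished by Proposition~\ref{prop9.B}. On the first four summands of \eqref{eq9.C} each of $K_1$ and $K_2$ acts as $0$ or $1$, so the selfadjoint operator $K_1-K_2$ acts there as $0$, $+1$, $-1$, $0$ respectively; on the last summand $L$ its action is governed by the block forms \eqref{eq9.D}. Since $K_1-K_2$ respects this decomposition, both its Hilbert--Schmidt norm and the two kernels relevant for the index split as direct sums over the five pieces, and the only nontrivial contribution comes from $L$.

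First I would treat the Hilbert--Schmidt condition. From \eqref{eq9.D} and $C^2+S^2=1$ one gets, on $L\cong V\oplus V$,
\begin{equation*}
K_1'-K_2'=\begin{bmatrix} S^2 & -CS\\ -CS & -S^2\end{bmatrix},
\end{equation*}
and a direct multiplication, using again $C^2+S^2=1$ and the commutativity of $C$ and $S$, yields $(K_1'-K_2')^2=\begin{bmatrix} S^2 & 0\\ 0 & S^2\end{bmatrix}$. Taking the trace over all five summands then gives
\begin{equation*}
\Vert K_1-K_2\Vert_{HS}^2=\tr\bigl((K_1-K_2)^2\bigr)=\dim(L_1\cap L_2^\perp)+\dim(L_1^\perp\cap L_2)+2\tr(S^2),
\end{equation*}
the first two terms being the traces of the identity operators on the second and third summands of \eqref{eq9.C}. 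As a sum of three nonnegative quantities, the left-hand side is finite if and only if each term is finite, that is, if and only if $\dim(L_1\cap L_2^\perp)<\infty$, $\dim(L_1^\perp\cap L_2)<\infty$, and $\tr(S^2)<\infty$; the last condition is precisely $S\in\mathscr C_2(V)$. This establishes the first assertion.

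For the index I would compute the two kernels occurring in \eqref{eq9.A}. Restricted to $L_1$ the operator $K_2K_1$ coincides with $K_2$, so its kernel is $L_1\cap\Ker K_2=L_1\cap L_2^\perp$; symmetrically $\Ker(K_1K_2\big|_{L_2})=L_1^\perp\cap L_2$. The point needing care is that the summand $L$ contributes nothing to either kernel: a vector of $L_1\cap L$ has the form $(v,0)$, and $K_2'(v,0)=(C^2v,CSv)$ vanishes only when $v=0$, since $\Ker C=0$; the argument for $L_2\cap L$ is identical and uses $\Ker C=\Ker S=0$. Substituting these dimensions into \eqref{eq9.A} gives
\begin{equation*}
\operatorname{index}(K_2K_1\colon L_1\to L_2)=\dim(L_1\cap L_2^\perp)-\dim(L_1^\perp\cap L_2),
\end{equation*}
so the index vanishes exactly when the two dimensions coincide, which is the second assertion. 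These computations are routine once Proposition~\ref{prop9.B} is in hand; the one substantive point is the verification that the summand $L$ contributes no kernel, and this is exactly where the nondegeneracy $\Ker C=\Ker S=0$ built into Proposition~\ref{prop9.B} is indispensable --- without it the operator $K_2K_1\colon L_1\to L_2$ could fail to be Fredholm and its index would be undefined.
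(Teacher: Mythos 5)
Your proof is correct, and it follows essentially the same route as the paper's: both rest on the five-term decomposition \eqref{eq9.C}, the block forms \eqref{eq9.D} of Proposition \ref{prop9.B}, and the index formula \eqref{eq9.A}. If anything, your treatment is a mild streamlining: the identity $(K_1'-K_2')^2=S^2\oplus S^2$ yields the Hilbert--Schmidt criterion in a single trace computation, where the paper argues in two steps (first reduce to $K_1'-K_2'\in\mathscr C_2(L)$, then deduce $S\in\mathscr C_2(V)$ from $S^2, CS\in\mathscr C_2(V)$ by inspecting eigenvalues); and your set-theoretic identifications $\Ker(K_2K_1|_{L_1})=L_1\cap L_2^\perp$ and $\Ker(K_1K_2|_{L_2})=L_1^\perp\cap L_2$ are more direct than the paper's computation $(K_2'K_1')^*(K_2'K_1')|_{L_1'}=C^2$.

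One caveat about your closing sentence: it misattributes the source of the Fredholm property. The operator $K_2K_1\colon L_1\to L_2$ is Fredholm because $K_1-K_2$ is compact under the standing hypothesis $K_1-K_2\in\mathscr C_2(E)$ (for instance, $K_1K_2K_1|_{L_1}=1_{L_1}+K_1(K_2-K_1)K_1|_{L_1}$ is the identity plus a compact operator, and similarly on $L_2$), exactly as the paper notes after Proposition \ref{prop9.A}; this has nothing to do with $\Ker C=\Ker S=0$. Moreover, since your kernel identifications hold set-theoretically for an arbitrary pair of closed subspaces, the verification that the summand $L$ contributes nothing to the kernels is a consistency check rather than an indispensable step; the nondegeneracy of $C$ and $S$ built into Proposition \ref{prop9.B} merely expresses that the degenerate directions have already been absorbed into the first four summands of \eqref{eq9.C}. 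This remark does not affect the validity of your argument.
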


\begin{proof}
From Proposition \ref{prop9.B} it is seen that $K_1-K_2\in\mathscr C_2(E)$ if and only if, first, the subspaces $L_1\cap L_2^\perp$ and $L_1^\perp\cap L_2$ have finite dimension and, second, $K'_1-K'_2\in \mathscr C_2(L)$. 

Now we claim that the latter condition is equivalent to $S\in \mathscr C_2(V)$. Indeed, if $K'_1-K'_2\in \mathscr C_2(L)$, then $S^2\in\mathscr C_2(V)$ and $CS\in\mathscr C_2(V)$. Looking at the eigenvalues of $S$ we see that this just means that $S\in\mathscr C_2(V)$. The inverse implication is also evident. 

Let $L'_1$ and $L'_2$ be the ranges of the projections $K'_1$ and $K'_2$, respectively. We claim that the operator  $K'_2 K'_1\big|_{L'_1}$ has trivial kernel. Indeed, we have 
$$
(K'_2 K'_1)^*(K'_2K_1)\big|_{L'_1}=K'_1K'_2K'_1\big|_{L'_1}=C^2.
$$
But $C^2$ has trivial kernel, because so does $C$. Likewise, the operator $K'_1 K'_2\big|_{L'_2}$ has trivial kernel, too.  Applying \eqref{eq9.A} we see that the index of $K'_2K'_1: L'_1\to L'_2$ equals $0$. 
It follows that 
$$
\operatorname{index}(K_2K_1: L_1\to L_2)=\dim(L_1\cap L_2^\perp)-\dim(L_1^\perp\cap L_2).
$$
This proves the last statement of the proposition.
\end{proof}

As a corollary we obtain a sufficient condition for the vanishing of the index. 

\begin{corollary}\label{cor9.A}
Let $K_1$ and $K_2$ be projection operators on a Hilbert space $E$ and $L_1,L_2$ be their ranges. Suppose that  $K_1-K_2\in\mathscr C_2(E)$. 

If\/ $\Vert K_1-K_2\Vert<1$, then the operator 
$K_2K_1: L_1\to L_2$ has index $0$.
\end{corollary}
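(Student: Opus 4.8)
The plan is to reduce the statement to Proposition \ref{prop9.C} and then exploit the norm bound $\Vert K_1-K_2\Vert<1$ to kill the two intersection subspaces that enter the index formula.

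First I would invoke Proposition \ref{prop9.C}. Since we are given $K_1-K_2\in\mathscr C_2(E)$, that proposition applies; in particular $K_2K_1:L_1\to L_2$ is Fredholm and
$$
\operatorname{index}(K_2K_1: L_1\to L_2)=\dim(L_1\cap L_2^\perp)-\dim(L_1^\perp\cap L_2).
$$
Thus it suffices to show that, under the extra hypothesis $\Vert K_1-K_2\Vert<1$, both subspaces $L_1\cap L_2^\perp$ and $L_1^\perp\cap L_2$ are trivial, which certainly makes the two dimensions equal and the index equal to $0$.

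The key observation is elementary. Suppose $v\in L_1\cap L_2^\perp$. Then $K_1v=v$, because $v\in L_1$, and $K_2v=0$, because $v\in L_2^\perp$. Hence $(K_1-K_2)v=v$, so that $\Vert(K_1-K_2)v\Vert=\Vert v\Vert$. If $v\ne0$ this forces $\Vert K_1-K_2\Vert\ge1$, contradicting our assumption; therefore $L_1\cap L_2^\perp=\{0\}$. The same argument applied to a vector $v\in L_1^\perp\cap L_2$ gives $K_1v=0$ and $K_2v=v$, whence $(K_1-K_2)v=-v$ and again $\Vert(K_1-K_2)v\Vert=\Vert v\Vert$; this yields $L_1^\perp\cap L_2=\{0\}$.

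Combining the two vanishing statements with the index formula from Proposition \ref{prop9.C}, we conclude that the index of $K_2K_1:L_1\to L_2$ equals $0-0=0$, as required. I do not anticipate any genuine obstacle here: the entire argument hinges on the trivial computation of how $K_1$ and $K_2$ act on vectors lying in these intersection subspaces, and the only mild subtlety is the bookkeeping ensuring that the Fredholm framework of Proposition \ref{prop9.C} is in force, which is guaranteed by the standing hypothesis $K_1-K_2\in\mathscr C_2(E)$.
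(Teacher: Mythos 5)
Your proof is correct and follows essentially the same route as the paper: the paper also deduces from $\Vert K_1-K_2\Vert<1$ that $L_1\cap L_2^\perp$ and $L_1^\perp\cap L_2$ are null and then invokes the last statement of Proposition \ref{prop9.C}. You merely spell out the elementary eigenvector-style argument (that a nonzero $v$ in either intersection satisfies $\Vert(K_1-K_2)v\Vert=\Vert v\Vert$) which the paper leaves implicit.
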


\begin{proof}
Since $\Vert K_1-K_2\Vert<1$, the subspaces $L_1\cap L_2^\perp$ and $L_1^\perp\cap L_2$ are null. Then the last statement of Proposition \ref{prop9.C} shows that the index is $0$. 
\end{proof}

\begin{remark}\label{rem9.A}
The following argument, based on Corollary \ref{cor9.A}, shows that in certain circumstances one may avoid the explicit computation of the operator $K_2K_1: L_1 \to L_2$. 

Namely, suppose $t$ range over an interval $I\subset\R$ and $\{K(t): t\in I\}$ is a family of projection operators on a Hilbert space $E$ depending continuously on $t$ with respect to the norm topology. Suppose further that $K(t_1)-K(t_2)\in\mathscr C_2(E)$ for any $t_1, t_2\in I$.  Then, by virtue of Corollary \ref{cor9.A}, the index related to  any pair $K_1=K(t_1)$, $K_2=K(t_2)$ is $0$. 

We use this argument below in the proof of Theorem \ref{thm10.B}. 
\end{remark}

\section{Applications and remarks}\label{sect10}

\begin{proposition}\label{prop10.A}
Let $M$ be a perfect measure such that the corresponding canonical kernel $K(x,y)$ is a projection kernel (see Definition \ref{def4.B}). Then $M$ is ergodic.
\end{proposition}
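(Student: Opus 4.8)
The plan is to deduce ergodicity of $M$ from the irreducibility of the representation of $\A^0$ attached to it. By Remark \ref{rem4.A}, the analogue of Proposition \ref{prop2.B}(ii) holds for the representation $T[M]$ of $\A^0$: the measure $M$ is ergodic if and only if $T[M]$ is irreducible. Thus the entire task reduces to showing that $T[M]$ is an irreducible representation of $\A^0$.

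Next I would identify $T[M]$ with the GNS representation of the state $\tau[M]$. Recall from the discussion following Definition \ref{def4.A} that $\one$ is a cyclic vector for $T[M]$ and that $\tau[M](a)=(T[M](a)\one,\one)$ for every $a\in\A^0$. Consequently the triple $(T[M],L^2(\Om,M),\one)$ coincides, up to unitary equivalence, with the cyclic representation produced by the GNS construction applied to $\tau[M]$. By the standard correspondence between pure states and irreducible cyclic representations of a $C^*$-algebra, $T[M]$ is irreducible if and only if $\tau[M]$ is a pure state on $\A^0$.

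Finally I would invoke the purity criterion of Theorem \ref{thm9.A}(i). By hypothesis $M$ is a perfect measure whose canonical kernel is a projection kernel, so $\tau[M]=\varphi[K]$ where $K$ is an orthogonal projection operator. Theorem \ref{thm9.A}(i) then guarantees that $\varphi[K]$ is a pure state. Assembling the three steps, $\tau[M]$ is pure, hence $T[M]$ is irreducible, hence $M$ is ergodic. The only point that requires a little care is the identification of $T[M]$ with the GNS representation of $\tau[M]$, but this is immediate from the cyclicity of $\one$ together with the defining relation $\tau[M](a)=(T[M](a)\one,\one)$, so no genuine obstacle arises; the argument is essentially a bookkeeping combination of already established facts.
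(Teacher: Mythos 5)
Your proof is correct and follows essentially the same route as the paper: reduce ergodicity to irreducibility of $T[M]$ (via Remark \ref{rem4.A} / Proposition \ref{prop2.B}), use cyclicity of $\one$ to identify irreducibility with purity of the GNS state $\tau[M]=\varphi[K]$, and conclude by the purity criterion for quasifree states. In fact your citation of Theorem \ref{thm9.A}(i) is the accurate one --- the paper's proof cites part (ii), which is evidently a typo, since the purity criterion is part (i).
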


\begin{proof}
By Proposition \ref{prop2.B}, an $\SS$-quasi-invariant measure $M$ is ergodic if and only if the associated representation $\T[M]$ of the algebra $C(\Om)\rtimes\SS$ is irreducible. Thus, we have to show that $\T[M]$ is irreducible. Next, we may  replace $\T[M]$ by the representation $T[M]$ of the algebra $\A^0$. Since the distinguished vector $\one\in L^2(\Om,M)$ is a cyclic vector for $T[M]$, this representation is irreducible if and only if $\tau[M]$ is a pure state. Since $M$ is assumed to be perfect, this means that $\varphi[K]$ is a pure state (here $K$ is the operator corresponding to $K(x,y)$. Finally, by Theorem \ref{thm9.A} (ii) this holds true because $K$ is a projection.
\end{proof}

Here is a direct corollary:

\begin{corollary}\label{cor10.A}
The perfect measures from Theorems \ref{thm8.B} and \ref{thm8.C} are ergodic measures.
\end{corollary}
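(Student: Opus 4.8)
The plan is to invoke Proposition \ref{prop10.A}, whose hypothesis requires only two things: that the measure be perfect, and that its canonical correlation kernel be a projection kernel. Perfectness is already asserted in Theorems \ref{thm8.B} and \ref{thm8.C}, so the only task that remains is to verify that each of the canonical kernels listed there corresponds to an orthogonal projection operator on the appropriate coordinate space $\ell^2(\X)$. Once this is checked, ergodicity follows at once.

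For the discrete sine measure $M^\dSine_\phi$ the verification is immediate: as recalled just before Theorem \ref{thm8.C}, the operator $K^\dSine_\phi$ on $\ell^2(\Z)$ is a spectral projection, since under the Fourier transform $\ell^2(\Z)\to L^2(\mathbb T)$ it becomes multiplication by the indicator $\one_{(-\phi,\phi)}$. Hence Proposition \ref{prop10.A} applies to $M^\dSine_\phi$ verbatim. For the Hermite, Laguerre, and Jacobi measures the kernels $K^\pm_r$ are projection kernels by their very construction, being defined in \eqref{eq8.I} as spectral projections onto half-line spectral subspaces. By Theorem \ref{thm8.B}(ii), the canonical kernel of each measure $M^\pm_r$ is either one of these $K^\pm_r$ itself or its image under the gauge substitution $K(x,y)\leadsto(-1)^{x-y}K(x,y)$.

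The one point requiring a word of care is that this gauge substitution preserves the projection property. This is routine: the operation $K(x,y)\mapsto(-1)^{x-y}K(x,y)$ is conjugation $K\mapsto UKU^{-1}$ by the diagonal unitary $U\colon e_x\mapsto(-1)^x e_x$ on $\ell^2(\Z_{\ge0})$ (indeed $(UKU^{-1}e_y,e_x)=(-1)^{x+y}K(x,y)=(-1)^{x-y}K(x,y)$, using $U=U^*=U^{-1}$), and conjugation by a unitary carries an orthogonal projection to an orthogonal projection. Thus in every case the canonical kernel corresponds to a projection operator, and Proposition \ref{prop10.A} yields the ergodicity of all the measures in Theorems \ref{thm8.B} and \ref{thm8.C}.
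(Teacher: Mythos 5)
Your proof is correct and follows exactly the route the paper intends: the paper states this as a direct corollary of Proposition \ref{prop10.A}, with perfectness and the identification of the canonical kernels supplied by Theorems \ref{thm8.B} and \ref{thm8.C}. Your extra check that the sign factor $(-1)^{x-y}$ is a unitary conjugation, hence preserves the projection property, is a routine detail the paper leaves implicit, and you handle it correctly.
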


\begin{remark}\label{rem10.A}
A completely different possible way to establish the ergodic property is indicated in Bufetov \cite[remark after Theorem 1.6]{Buf-2018}: an $\SS$-quasi-invariant determinantal measure on $\{0,1\}^\X$ is ergodic provided it it is \emph{number rigid} in the sense of Ghosh--Peres \cite{Ghosh}, \cite{GhoshPeres}. The idea is to combine the rigidity property with Lyons' theorem on triviality of the tail $\si$-algebra, which holds for any determinantal measure \cite[Theorem 7.15]{Lyons}.
\end{remark}

\begin{proposition}\label{prop10.B}
Let $M_1$ and $M_2$ be two perfect measures and $K_1(x,y)$ and $K_2(x,y)$ be their  canonical correlation kernels. Suppose that they are projection kernels, and let $K_1$ and $K_2$ denote the corresponding projection operators.

The following dichotomy holds: $M_1$ and $M_2$ are either equivalent or disjoint, and this happens depending on whether the corresponding quasifree states $\varphi[K_1]$ and $\varphi[K_2]$ are equivalent or not. 
\end{proposition}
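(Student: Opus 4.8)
The plan is to realize each representation $T[M_i]$ as the GNS representation of the corresponding quasifree state, and then to chain together the correspondences between measures, representations, and states established in Sections \ref{sect2}, \ref{sect4}, and \ref{sect9}.

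First I would dispose of the dichotomy assertion itself. Since the canonical kernels $K_1$ and $K_2$ are projection kernels, Proposition \ref{prop10.A} shows that the perfect measures $M_1$ and $M_2$ are ergodic; being perfect, they are in particular $\SS$-quasi-invariant. Hence Proposition \ref{prop2.A} applies and yields at once that $M_1$ and $M_2$ are either equivalent or disjoint. It then remains only to decide which alternative occurs, in terms of the states.

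Next I would identify $T[M_i]$ with the GNS representation of $\varphi[K_i]$. By the definition of a perfect measure (Definition \ref{def4.B}) we have $\tau[M_i]=\varphi[K_i]$, and by the discussion following Definition \ref{def4.A} the constant function $\one$ is a cyclic vector for $T[M_i]$ with $\tau[M_i](a)=(T[M_i](a)\one,\one)$ for all $a\in\A^0$. Thus $(T[M_i],L^2(\Om,M_i),\one)$ is a GNS triple for $\varphi[K_i]$, so by uniqueness of the GNS construction $T[M_i]$ is unitarily equivalent to the GNS representation of $\varphi[K_i]$. Because $K_i$ is a projection, Theorem \ref{thm9.A}(i) guarantees that $\varphi[K_i]$ is a pure state, so $T[M_i]$ is irreducible. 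The heart of the argument is then the chain of equivalences
$$
M_1\sim M_2 \;\Longleftrightarrow\; T[M_1]\cong T[M_2] \;\Longleftrightarrow\; \varphi[K_1]\sim\varphi[K_2].
$$
The first equivalence is Proposition \ref{prop2.B}(i) transported to $\A^0$ via Remark \ref{rem4.A}: equivalence of the measures is the same as unitary equivalence of the associated representations. The second equivalence uses that $T[M_1]$ and $T[M_2]$ are the GNS representations of the pure states $\varphi[K_1]$ and $\varphi[K_2]$; for pure states, equivalence of the states is exactly unitary equivalence of their GNS representations. Concatenating, $M_1$ and $M_2$ are equivalent precisely when $\varphi[K_1]$ and $\varphi[K_2]$ are equivalent, and by the dichotomy of the first step they are disjoint otherwise. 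The explicit Hilbert--Schmidt/index criterion for the latter equivalence is then supplied by Theorem \ref{thm9.A}(ii).

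I expect the main obstacle to be the careful matching of the several notions of ``equivalence'' in play: one must check that the equivalence of representations appearing in Proposition \ref{prop2.B}(i) (which its proof realizes by an explicit multiplication operator, hence is genuine unitary equivalence) coincides, for the irreducible representations $T[M_i]$, with the equivalence of the generating pure states that Theorem \ref{thm9.A}(ii) detects through the $U_2(E)$-orbit condition. Once this bookkeeping is in place, the identification of $T[M_i]$ as a GNS representation and the appeals to Propositions \ref{prop10.A} and \ref{prop2.A} are routine.
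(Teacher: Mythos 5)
Your proposal is correct and follows essentially the same route as the paper: the paper's own proof is a one-line reference to the argument of Proposition \ref{prop10.A} combined with Propositions \ref{prop2.A} and \ref{prop2.B}(i), which is precisely the chain you spell out (ergodicity of $M_1,M_2$ giving the dichotomy, then measure equivalence $\Leftrightarrow$ equivalence of $T[M_i]$ $\Leftrightarrow$ equivalence of the pure states $\varphi[K_i]$ via the GNS identification with cyclic vector $\one$). The only difference is that you make explicit the GNS and bookkeeping details that the paper leaves implicit.
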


\begin{proof}
The argument is the same as in Proposition \ref{prop10.A}, with reference to Proposition \ref{prop2.A} and Proposition \ref{prop2.B} (i).
\end{proof}

\begin{remark}
Note that for rigid perfect measures, the projection property for the canonical correlation kernel holds automatically. Indeed, let $M\in\{0,1\}^\X$ be a perfect measure, so that $\tau[M]=\varphi[K]$. If one knows additionally that $M$ is rigid, then, according to Remark \ref{rem10.A}, $M$ is $\SS$-ergodic. But then  $\varphi[K]$ is a pure state, which in turn implies that $K$ is a projection, by virtue of Theorem \ref{thm9.A} (i). 
\end{remark}

\begin{corollary}[Criterion of equivalence/disjointness]\label{cor10.B}
Let $M_1$ and $M_2$ be two measures satisfying the hypotheses of Proposition \ref{prop10.B}. If the corresponding projections $K_1$ and $K_2$ satisfy the condition of Theorem \ref{thm9.A} (ii), then  $M_1$ and $M_2$  are equivalent; otherwise they are disjoint.
\end{corollary}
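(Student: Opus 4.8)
The plan is to string together the two results immediately preceding this corollary, both of which are already in hand. First I would invoke Proposition \ref{prop10.B}. Under exactly the hypotheses assumed here --- namely, $M_1$ and $M_2$ are perfect measures whose canonical correlation kernels are projection kernels --- that proposition establishes the dichotomy that $M_1$ and $M_2$ are either equivalent or disjoint, and that which of the two alternatives occurs is governed entirely by whether the associated quasifree states $\varphi[K_1]$ and $\varphi[K_2]$ are equivalent or not. This step reduces the problem to deciding the equivalence of the two quasifree states.

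Next I would apply Theorem \ref{thm9.A}(ii), which supplies the intrinsic criterion for equivalence of $\varphi[K_1]$ and $\varphi[K_2]$ directly in terms of the projection operators: the two states are equivalent precisely when $K_1$ and $K_2$ satisfy the second condition of Proposition \ref{prop9.A}, i.e. $K_1-K_2\in\mathscr C_2(E)$ together with the vanishing of the index of the Fredholm operator $K_2K_1\colon L_1\to L_2$. Chaining the two equivalences,
$$
M_1\sim M_2 \iff \varphi[K_1]\sim\varphi[K_2] \iff \text{($K_1,K_2$ satisfy the condition of Theorem \ref{thm9.A}(ii))},
$$
yields exactly the claimed statement, with disjointness holding in the complementary case by the dichotomy of Proposition \ref{prop10.B}.

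I do not expect any genuine obstacle here: the corollary is a purely formal consequence of composing the two cited results. The only point that deserves a word of care is that the hypotheses of the corollary --- inherited verbatim from Proposition \ref{prop10.B} --- are literally what is needed to invoke each ingredient, so that both Proposition \ref{prop10.B} and Theorem \ref{thm9.A}(ii) apply without any additional verification. If one wished to make the criterion effective in concrete cases (as is done afterwards for the discrete Hermite and Jacobi families), the real work would lie not in this corollary but in checking the Hilbert--Schmidt and index-zero conditions for the specific kernels, for which Proposition \ref{prop9.C}, Corollary \ref{cor9.A}, and Remark \ref{rem9.A} provide the necessary tools.
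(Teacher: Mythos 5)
Your proposal is correct and is exactly the argument the paper intends: the corollary is stated without a separate proof precisely because it is the formal composition of the dichotomy of Proposition \ref{prop10.B} with the equivalence criterion of Theorem \ref{thm9.A}(ii). Nothing further is needed.
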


In the rest of the paper we give two examples of application of this criterion (Theorems \ref{thm10.A} and \ref{thm10.B}). 

Let $M^\dHermite_{+,r}$, where $r\in\R$, denote the determinantal measure on $\{0,1\}^{\Z_{\ge0}}$ with the discrete Hermite kernel with parameters $(+,r)$, see \S \ref{sect8.2} above. 

\begin{theorem}\label{thm10.A}
The measures $M^\dHermite_{+,r}$, where $r$ ranges over\/ $\R$, are pairwise disjoint.
\end{theorem}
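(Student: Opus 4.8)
The plan is to reduce the statement to the equivalence criterion for quasifree states and then to exhibit a failure of the Hilbert--Schmidt condition. By Theorem \ref{thm8.B}, for each $r\in\R$ the measure $M^\dHermite_{+,r}$ is perfect and its canonical correlation kernel is the projection kernel $K^+_r$; write $K^+_r$ also for the corresponding projection operator on $E:=\ell^2(\Z_{\ge0})$. Proposition \ref{prop10.B} then tells us that any two of these measures are either equivalent or disjoint, the alternative being governed by the equivalence of the quasifree states $\varphi[K^+_r]$. By Theorem \ref{thm9.A}(ii) together with Proposition \ref{prop9.A}, equivalence of $\varphi[K^+_r]$ and $\varphi[K^+_{r'}]$ requires, as a first necessary condition, that $K^+_r-K^+_{r'}$ be a Hilbert--Schmidt operator. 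Thus it suffices to prove that for $r\ne r'$ the difference $K^+_r-K^+_{r'}$ is \emph{not} in $\mathscr C_2(E)$; the index condition never even enters.

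To do this I would diagonalize the two projections simultaneously. Let $\psi_x:=\wt{\mathcal P}_x\,\mathcal W^{1/2}$, where $\mathcal W$ is the Gaussian weight and $\wt{\mathcal P}_x$ the orthonormal Hermite polynomials; the $\psi_x$ are the Hermite functions, which form a complete orthonormal system in $L^2(\R,dt)$, so $e_x\mapsto\psi_x$ extends to a unitary isomorphism $\U\colon E\to L^2(\R,dt)$. The defining formula \eqref{eq8.I} for $K^+_r$ reads $K^+_r(x,y)=(\one_{[r,+\infty)}\psi_y,\psi_x)_{L^2(\R)}$, which says exactly that $\U K^+_r\U^{-1}$ is the operator of multiplication by the indicator $\one_{[r,+\infty)}$. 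Consequently, for $r<r'$, the operator $\U(K^+_r-K^+_{r'})\U^{-1}$ is multiplication by $\one_{[r,+\infty)}-\one_{[r',+\infty)}=\one_{[r,r')}$, i.e.\ the orthogonal projection onto $L^2([r,r'),dt)$. Since $[r,r')$ has positive Lebesgue measure this range is infinite-dimensional, so the projection is not Hilbert--Schmidt; and because the Hilbert--Schmidt norm is unitarily invariant, neither is $K^+_r-K^+_{r'}$. Feeding this into Corollary \ref{cor10.B} yields pairwise disjointness.

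I expect no serious obstacle here: the computation is short once the right viewpoint is fixed. The one point that requires care---and is really the crux---is the simultaneous unitary diagonalization of the family $\{K^+_r\}$ as the multiplication operators $\{M_{\one_{[r,+\infty)}}\}$, which rests on the completeness of the Hermite functions and on correctly matching normalizations between \eqref{eq8.I} and the inner product of $L^2(\R,dt)$. It is worth noting that this is exactly the situation in which disjointness is \emph{easy}: the difference of the two spectral projections is itself a projection of infinite rank, so the Hilbert--Schmidt condition fails outright. By contrast, in the Jacobi family of Theorem \ref{thm10.B} the analogous differences do lie in $\mathscr C_2(E)$, and there one must genuinely verify both parts of the criterion of Theorem \ref{thm9.A}(ii).
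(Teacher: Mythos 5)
Your proof is correct and takes essentially the same route as the paper: the paper likewise reduces the problem, via Theorem \ref{thm8.B}, Proposition \ref{prop10.B} and Theorem \ref{thm9.A}, to the Hilbert--Schmidt condition, and then observes that for $r_1<r_2$ the difference $K^\dHermite_{+,r_1}-K^\dHermite_{+,r_2}$ is a projection with infinite-dimensional range, hence not in $\mathscr C_2(E)$, so the index condition is never needed. The only difference is presentational: what the paper asserts as immediate from \eqref{eq8.I}, you justify explicitly by the unitary $e_x\mapsto\psi_x$ (completeness of the Hermite functions) carrying the kernels to multiplication by $\one_{[r,+\infty)}$, which is precisely the mechanism underlying the paper's one-line claim.
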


\begin{proof}
By Theorem \ref{thm8.B}, these are perfect measures. Next, the same theorem tells us that their canonical correlation kernels are the discrete Hermite kernels. Let $K^\dHermite_{+,r}$ denote the corresponding projection operators. From \eqref{eq8.I} it follows that if $r_1<r_2$, then the difference $K^\dHermite_{+,r_1}-K^\dHermite_{+,r_2}$ is a projection operator with infinite-dimensional range. Such an operator is not Hilbert-Schmidt. Therefore, by the criterion of Theorem \ref{thm9.A}, the corresponding quasifree states are not equivalent. This completes the proof. 
\end{proof}

\begin{remark}
Sometimes one can prove that two determinantal measures are disjoint when their first correlation functions have different asymptotics at infinity. (The idea is to use the strong law of large numbers for the size of the particle configuration in a growing interval, of the type of \cite[Theorem 5.1]{Ols-98}.)  For instance, this test easily shows that the measures $M^\dSine_\phi$ with different values of parameter $\phi$ are disjoint. 

However,  the result of Theorem \ref{thm10.A}  cannot be obtained by such a rough method.  Indeed, in the situation of Theorem \ref{thm10.A}, the first correlation function of $M^\dHermite_{+,r}$ tends at infinity to the constant $\frac12$, irrespective of the value of the parameter $r$ (this can be checked using the approach of \cite{BO-2017}), so that the above test is not sensitive to a change in parameter $r$. 
\end{remark}

Consider the classical Jacobi polynomials $\J_n(t)$ with equal parameters $(a,a)$, where $a>-1$. These are orthogonal polynomials  with the weight measure $(1-t^2)^a dt$ on $[-1,1]$. We use the same standardization as in \cite{Er2} and \cite{Sz}: it is specified by the condition that the leading coefficient of $\J_n(t)$ equals
$$
k_n:= 2^{-n}\binom{2n+2a}{n}.
$$
Note that in a different standardization, these polynomials turn into the ultraspherical polynomials $C^\la_n(t)$ with parameter $\la=a+\frac12$. 

Given $a>-1$, we form two kernels on $\Z_{\ge0}\times\Z_{\ge0}$,
\begin{gather*}
K^{+,a}(x,y):=\frac1{\Vert \J_x\Vert\, \Vert\J_y\Vert}\,\int_0^{+\infty} \J_x(t)\J_y(t)(1-t^2)^adt, 
\\  K^{-,a}(x,y):= \frac1{\Vert \J_x\Vert\, \Vert\J_y\Vert}\,  \int_{-\infty}^0 \J_x(t)\J_y(t)(1-t^2)^adt.
\end{gather*}
These are projection kernels of the form \eqref{eq8.I} with $r=0$ (our definition agrees with \eqref{eq8.I} because $k_n>0$ for all $n\in\Z_{\ge0}$), and they are a particular case of the discrete Jacobi kernels \cite{BO-2017}. Let us denote by $M^{+,a}$ and $M^{-,a}$ the corresponding determinantal measures; they are an instance of measures denoted in Section \ref{sect8.2} by $M^+_r$ and $M^-_r$. 

Theorem \ref{thm8.B} shows that  the measures $M^{+,a}$ and $M^{-,a}$ are perfect, and their canonical correlation kernels are $K^{+,a}(x,y)$ and $(-1)^{x-y}K^{-,a}(x,y)$, respectively. Moreover, $M^{-,a}=(M^{+,a})^\circ$ (recall that $(\ccdot)^\circ$ is the particle/hole involution, see \S \ref{sect4.5}).   
On the other hand, because of the symmetry relation $\J_n(-t)=(-1)^n\J_n(t)$, the kernels $K^{+,a}(x,y)$ and $(-1)^{x-y}K^{-,a}(x,y)$ coincide. It follows that $M^{+,a}=(M^{+,a})^\circ$; in words: the measure $M^{+,a}$ coincides with its pushforward under the particle/hole involution. One more consequence is that 
\begin{equation}\label{eq10.A}
K^{+,a}(x,x)\equiv\tfrac12.
\end{equation}
In what follows we denote by $K^{+,a}$ the projection operator on $E:=\ell^2(\Z_{\ge0})$ with the kernel $K^{+,a}(x,y)$.

\begin{theorem}\label{thm10.B}
All the measures $M^{+,a}$, $a>-1$, are equivalent. 
\end{theorem}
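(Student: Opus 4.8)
The plan is to invoke the equivalence/disjointness criterion of Corollary \ref{cor10.B}. By Theorem \ref{thm8.B} each $M^{+,a}$ is perfect and its canonical correlation kernel is the projection kernel $K^{+,a}(x,y)$; denote by $K^{+,a}$ the corresponding projection on $E=\ell^2(\Z_{\ge0})$ and by $L^{+,a}$ its range. It therefore suffices to show that for all $a_1,a_2>-1$ the pair $(K^{+,a_1},K^{+,a_2})$ satisfies condition (ii) of Theorem \ref{thm9.A}, namely $K^{+,a_1}-K^{+,a_2}\in\mathscr C_2(E)$ and $\operatorname{index}(K^{+,a_2}K^{+,a_1}\colon L^{+,a_1}\to L^{+,a_2})=0$. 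Throughout I write $p^{(a)}_n:=\J_n/\Vert\J_n\Vert$ for the orthonormal polynomials and $b^{(a)}_n$ for their three-term recurrence coefficients; since the weight is even, the diagonal recurrence coefficients vanish.

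First I would exploit the structure of $K^{+,a}$. By \eqref{eq10.A} its diagonal equals $\tfrac12$ for every $a$, and the symmetry $\J_n(-t)=(-1)^n\J_n(t)$ makes $K^{+,a}(x,y)$ vanish whenever $x-y$ is even and nonzero. For $x\ne y$ I would use the integrable (Christoffel--Darboux type) form of the edge kernel from \cite{BO-2017},
\begin{equation*}
K^{+,a}(x,y)=c\,\frac{b^{(a)}_x\,p^{(a)}_{x+1}(0)\,p^{(a)}_y(0)-b^{(a)}_y\,p^{(a)}_x(0)\,p^{(a)}_{y+1}(0)}{x-y},
\end{equation*}
where $c\in\{1,-1\}$ is a universal sign not depending on $a$. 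The decisive analytic input is the large-$n$ behaviour of the boundary data at the symmetric cut point $t=0$. Szegő's interior asymptotics for Jacobi polynomials \cite{Sz}, evaluated at $t=\cos\theta$ with $\theta=\pi/2$, give
\begin{equation*}
p^{(a)}_n(0)=\sqrt{\tfrac2\pi}\,\cos\!\Big(\big(n+a+\tfrac12\big)\tfrac\pi2-\big(a+\tfrac12\big)\tfrac\pi2\Big)+O(n^{-1})=\sqrt{\tfrac2\pi}\,\cos\tfrac{n\pi}2+O(n^{-1}),
\end{equation*}
the crucial point being that at $\theta=\pi/2$ the $a$-dependent phase exactly cancels the $a$-dependent frequency, so the leading term does not depend on $a$; similarly $b^{(a)}_n=\tfrac12+O(n^{-1})$. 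Consequently $p^{(a_1)}_n(0)-p^{(a_2)}_n(0)=O(n^{-1})$ and $b^{(a_1)}_np^{(a_1)}_{n+1}(0)-b^{(a_2)}_np^{(a_2)}_{n+1}(0)=O(n^{-1})$, with constants locally uniform in the parameters.

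These estimates yield the Hilbert--Schmidt property. The diagonal entries cancel, and for $x,y\ge1$ with $x\ne y$, expanding the numerator of the difference $K^{+,a_1}-K^{+,a_2}$ as a sum of terms in each of which exactly one factor is replaced by a difference of its two values (each surviving factor being $O(1)$ and each difference of factors being $O(n^{-1})$) gives
\begin{equation*}
\big|K^{+,a_1}(x,y)-K^{+,a_2}(x,y)\big|\le C\,\frac{x^{-1}+y^{-1}}{|x-y|}.
\end{equation*}
The rows and columns carrying an index equal to $0$ are square-summable on their own, since their entries decay like $|x-y|^{-1}$. Because $\sum_{y\ne x}(x-y)^{-2}\le\pi^2/3$ and $\sum_x x^{-2}<\infty$, the double sum $\sum_{x\ne y}(x^{-1}+y^{-1})^2(x-y)^{-2}$ converges, so $K^{+,a_1}-K^{+,a_2}\in\mathscr C_2(E)$. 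The same bound, together with the continuity of the boundary data in $a$ and dominated convergence in the Hilbert--Schmidt norm, shows that $a\mapsto K^{+,a}$ is continuous in the operator norm on $(-1,\infty)$.

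It remains to verify that the index vanishes, and for this I would appeal directly to Remark \ref{rem9.A}: the family $\{K^{+,a}:a\in(-1,\infty)\}$ is norm-continuous with pairwise Hilbert--Schmidt differences, so Corollary \ref{cor9.A}, applied along the connected interval of parameters, forces $\operatorname{index}(K^{+,a_2}K^{+,a_1}\colon L^{+,a_1}\to L^{+,a_2})=0$ for every pair. Condition (ii) of Theorem \ref{thm9.A} thus holds, and Corollary \ref{cor10.B} gives that all the measures $M^{+,a}$ are pairwise equivalent. I expect the main obstacle to be the asymptotic step: producing the integrable form and, above all, recognizing that at the symmetric edge $t=0$ the $a$-dependence disappears from the leading oscillatory term of $p^{(a)}_n(0)$. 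This cancellation is precisely what renders the difference kernel Hilbert--Schmidt; at a non-symmetric cut point the phases would not align and one would instead expect disjointness, which is exactly what distinguishes Theorem \ref{thm10.B} from Theorem \ref{thm10.A}.
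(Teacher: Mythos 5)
Your overall strategy coincides with the paper's: reduce to Theorem \ref{thm9.A}(ii) via Corollary \ref{cor10.B}, prove the Hilbert--Schmidt property of $K^{+,a_1}-K^{+,a_2}$ with bounds uniform in the parameters, deduce norm continuity of $a\mapsto K^{+,a}$, and kill the index by Corollary \ref{cor9.A} along the connected parameter interval (Remark \ref{rem9.A}). The index step and the square-summability step are fine. However, the analytic core of your argument rests on a formula that is false for the Jacobi case. The integrable form
$$
K^{+,a}(x,y)=c\,\frac{b^{(a)}_x\,p^{(a)}_{x+1}(0)\,p^{(a)}_y(0)-b^{(a)}_y\,p^{(a)}_x(0)\,p^{(a)}_{y+1}(0)}{x-y}
$$
is modeled on the Hermite/discrete-sine pattern, where the denominator is $x-y$ because the relevant ODE eigenvalue is \emph{linear} in the degree. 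For Jacobi polynomials the eigenvalue is $\la_n=n(n+2a+1)$, quadratic in $n$, and the integration-by-parts identity $(\la_x-\la_y)K^{+,a}(x,y)=\pm\,w(0)\bigl[\J_x{}'(0)\J_y(0)-\J_x(0)\J_y{}'(0)\bigr]/(\Vert\J_x\Vert\,\Vert\J_y\Vert)$ produces the denominator $(x-y)(x+y+2a+1)$ and derivative data $P^{(a+1,a+1)}_{x-1}(0)$ in the numerator --- this is exactly \cite[Proposition 3.5]{BO-2017}, quoted as \eqref{eq10.B} in the paper. The discrepancy is not cosmetic: your formula predicts $|K^{+,a}(x,y)|\asymp 1/|x-y|$ when $y\gg x$, whereas the true kernel decays like $x/y^{2}$ in that regime (compare \eqref{eq10.E} after Lemma \ref{lemma10.A}); the numerator coefficients that replace your bounded $b^{(a)}_x$ actually grow linearly in the degree, and they cannot be absorbed. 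So the displayed expression is not the kernel, and every estimate you derive from it is unjustified as written.

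It happens that your final bound $|K^{+,a_1}(x,y)-K^{+,a_2}(x,y)|\le C\,(x^{-1}+y^{-1})/|x-y|$ is \emph{true} --- it is implied by the sharper asymptotics the paper obtains --- and your conceptual diagnosis is also correct: at the symmetric cut point $t=0$ the $a$-dependent phase in Szeg\H{o}'s asymptotics cancels, which is precisely why the leading term of $\J_n(0)/\Vert\J_n\Vert$ is parameter-free and why the theorem holds (the paper reaches the same cancellation through Lemma \ref{lemma10.A}, via the duplication formula and $\Ga$-function asymptotics applied to the exact expressions \eqref{eq10.C}, \eqref{eq10.D}, with uniformity in $a$ tracked explicitly). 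To repair your proof you must start from the correct formula \eqref{eq10.B}: exploit the parity vanishing to reduce to entries with one odd and one even index, and then carry out the asymptotic analysis of the products of norms, values, and the derivative-type factor $(x+2a+1)P^{(a+1,a+1)}_{x-1}(0)$, keeping the quadratic denominator $(x-y)(x+y+2a+1)$; the growing factor in the numerator is then compensated by the extra factor $x+y+2a+1$ downstairs. This is the content of the paper's Steps surrounding Lemma \ref{lemma10.A}, and it is the part your proposal is missing.
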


\begin{proof}
Because the measures are perfect and we know their canonical correlation kernels, we may apply the criterion of Corollary \ref{cor10.B}. It tells us that we have to check condition (ii) of Theorem \ref{thm9.A}. By virtue of Remark \ref{rem9.A} it suffices to prove that for any $a_1,a_2>-1$ the operator $K^{+,a_1}-K^{+,a_2}$ is Hilbert--Schmidt and its Hilbert--Schmidt norm depends continuously on the parameters (then it will follow that the last claim holds for the ordinary operator norm as well, because it is majorated by the Hilbert--Schmidt norm). 

Thus, we have to prove that
$$
\sum_{x,y\in\Z_{\ge0}}(K^{+,a_1}(x,y)-K^{+,a_2}(x,y))^2<\infty
$$
and, moreover, the sum depends continuously on $a_1, a_2$. 

Because of \eqref{eq10.A}, the summands with $x=y$ vanish, so that we may assume $x\ne y$. Then we use the formula for the discrete Jacobi kernel given in \cite[Proposition 3.5]{BO-2017}, where we set $b=a$ and $r=0$. It gives us
\begin{multline}\label{eq10.B}
K^{+,a}(x,y)=\frac1{2\Vert \J_x\Vert\, \Vert\J_y\Vert}\\
\times \frac{(x+2a+1)P^{(a+1,a+1)}_{x-1}(0)\J_y(0)-\J_x(0)(y+2a+1)P^{(a+1,a+1)}_{y-1}(0)}{(x-y)(x+y+2a+1)}.
\end{multline}

Let us write down explicit expressions for the quantities entering \eqref{eq10.B}: 
\begin{equation}\label{eq10.C}
\Vert \J_k\Vert^{-1}=\frac{\sqrt{(k+a+\frac12)\Ga(k+2a+1)\Ga(k+1)}}{2^a\Ga(k+a+1)}
\end{equation}
(see \cite[(4.3.3)]{Sz}) and
\begin{equation}\label{eq10.D}
\J_k(0)=\begin{cases} 0, & \text{$k$ odd;}\\ \dfrac{(-1)^\ell\Ga(k+a+1)}{2^k \Ga(\ell+1)\Ga(\ell+a+1)}, & \text{$k$ even, $k=2\ell$.}  \end{cases}
\end{equation}
The latter formula can be obtained, e.g. as follows: we use the explicit expression 
$$
\J_k(t)=2^{-k}\sum_{i=0}^k\binom{k+a}{k-i}\binom{k+a}{i}(t-1)^i(t+1)^{k-i}
$$
(see \cite[(4.3.2)]{Sz});  by specializing $t=0$ we see that $2^k\J_k(0)$ equals the coefficient of $u^k$ in the series expansion of
$$
(1-u)^{k+a}(1+u)^{k+a}=(1-u^2)^{k+a},
$$
which gives \eqref{eq10.D}.

From \eqref{eq10.D} it is seen that \eqref{eq10.B} vanishes unless one of the variables $x,y$ is odd and the other is even. Because of the symmetry of the kernel it suffices to examine the case when $x$ is odd and $y$ is even. Now we change the notation and set
$$
x=2n+1, \quad y=2m,  \qquad n,m\in\Z_{\ge0}.
$$
Taking into account \eqref{eq10.D} we obtain from \eqref{eq10.B}
\begin{equation}\label{eq10.E}
K^{+,a}(2n+1,2m)=\frac{n+a+1}{4\Vert \J_{2n+1}\Vert\, \Vert\J_{2m}\Vert}
\frac{P^{(a+1,a+1)}_{2n}(0)\J_{2m}(0)}{(n-m+\frac12)(n+m+a+1)}.
\end{equation}

\begin{lemma}\label{lemma10.A}
We have
\begin{equation}\label{eq10.F}
\frac{(n+a+1)P^{(a+1,a+1)}_{2n}(0)\J_{2m}(0)}{\Vert \J_{2n+1}\Vert\, \Vert\J_{2m}\Vert}=4(n+1)\left(1+O\left(\tfrac1{n+1}\right)\right)\left(1+O\left(\tfrac1{m+1}\right)\right)
\end{equation}
where the estimate on the right-hand side is uniform with respect to parameter $a>-1$ provided it is bounded away from $-1$ and $+\infty$. 
\end{lemma}

\begin{proof}[Proof of the lemma]
Using \eqref{eq10.C} and \eqref{eq10.D} we obtain (after a minor cancellation) that the left-hand side of \eqref{eq10.F} equals
\begin{gather*}
\frac{(-1)^n\sqrt{(2n+a+\frac32)\Ga(2n+2a+2)\Ga(2n+2)}}{2^{2n+a}\Ga(n+1)\Ga(n+a+1)}\\
\times
\frac{(-1)^m\sqrt{(2m+a+\frac12)\Ga(2m+2a+1)\Ga(2m+1)}}{2^{2m+a}\Ga(m+1)\Ga(m+a+1)}.
\end{gather*}
Applying the duplication formula for the $\Ga$-function,
$$
\Ga(2t)=2^{2t-1}\pi^{-\frac12}\Ga(t)\Ga(t+\tfrac12),
$$
we reduce this to
\begin{gather*}
\frac{2(-1)^{n+m}}\pi\, \sqrt{\dfrac{(2n+a+\frac32)\Ga(n+a+\frac32)\Ga(n+\frac32)}{\Ga(n+a+1)\Ga(n+1)}}\\
\times \sqrt{\dfrac{(2m+a+\frac12)\Ga(m+a+\frac12)\Ga(m+\frac12)}{\Ga(m+a+1)\Ga(m+1)}}.
\end{gather*}
Finally we use the asymptotic formula \cite[\S1.18, (4)]{Er1}
$$
\frac{\Ga(t+\al)}{\Ga(t+\be)}=t^{\al-\be}(1+O(t^{-1})), \qquad t\to+\infty,
$$
where the estimate is uniform with respect to parameters $\al,\be$ provided that they remain bounded. 

This proves \eqref{eq10.F}
\end{proof}

We return to the proof of the theorem. By virtue of Lemma \ref{lemma10.A}, the expression \eqref{eq10.E} takes the form
\begin{equation*}
K^{+,a}(2n+1,2m)=\frac{(-1)^{n+m}}\pi\,\frac{(n+1)\left(1+O\left(\tfrac1{n+1}\right)\right)\left(1+O\left(\tfrac1{m+1}\right)\right)}{(n-m+\frac12)(n+m+a+1)},
\end{equation*}
where the estimates on the right-hand side are uniform with respect to parameter $a$ provided that it is bounded away from $-1$ and $+\infty$ (which we tacitly assume in what follows). 

Further, because
$$
\frac1{n+m+a+1}=\frac{1+O\left(\tfrac1{n+m+1}\right)}{n+m+1},
$$
We can write 
\begin{equation*}
K^{+,a}(2n+1,2m)=\frac{(-1)^{n+m}}\pi\,\frac{(n+1)\left(1+O\left(\tfrac1{n+1}\right)+O\left(\tfrac1{m+1}\right)\right)}{(n-m+\frac12)(n+m+1)}
\end{equation*}
and from this we obtain
$$
|K^{+,a_1}(2n+1,2m)-K^{+,a_2}(2n+1,2m)|=\pi^{-1}\frac{(n+1)\left(O\left(\tfrac1{n+1}\right)+O\left(\tfrac1{m+1}\right)\right)}{(n-m+\frac12)(n+m+1)}
$$
with uniform estimates as before.

We are going to show that the expression on the right-hand side is square summable on $\Z_{\ge0}^2$. It suffices to prove the square summability on $\Z^2_{\ge0}$ separately for 
$$
A(n,m):=\frac1{(n-m+\frac12)(n+m+1)}
$$
and
$$
B(n,m):=\frac{(n+1)}{(m+1)(n-m+\frac12)(n+m+1)}.
$$
For $A(n,m)$ this is evident --- just take  $n-m+\frac12$ and $n+m+1$ as new variables. For $B(n,m)$ we observe that $\dfrac{n+1}{n+m+1}\le1$. This reduces the task to the similar claim about
$$
B'(n,m):=\frac1{(m+1)(n-m+\frac12)},
$$
and this is clear because we can take $m+1$ and $n-m+\frac12$ as new variables. 

We have proved that
$$
\sum_{n,m\in\Z_{\ge0}}(K^{+,a_1}(2n+1,2m)-K^{+,a_2}(2n+1,2m))^2<\infty,
$$
which implies that $K^{+,a_1}-K^{+,a_2}$ is a Hilbert--Schmidt operator. 

It remains to prove that its Hilbert-Schmidt squared norm depends continuously on the parameters $a_1,a_2$. But this follows from the above argument. Indeed, because our bounds are uniform with respect to the parameters, the above sum converges uniformly on $a_1$ and $a_2$ (provided that they are bounded away from  $-1$ and $+\infty$). Finally, we use that evident fact that each summand depends on the parameters continuously. 

This completes the proof of the theorem.
\end{proof}

\bigskip

\textbf{Acknowledgment.} 
I am grateful to Alexander Bufetov for stimulating discussions. I am also grateful to Cesar Cuenca for very helpful comments.

\bigskip

\end{document}